\DeclareMathOperator\Z{\mathbb Z}
\newcommand{\Om}{\Omega}
\newcommand{\G}{{\mathbb G}}
\newtheorem{theorem}{Theorem}[section]
\newtheorem{lemma}[theorem]{Lemma}
\newtheorem{cor}[theorem]{Corollary}
\newtheorem{prop}[theorem]{Proposition}
\theoremstyle{definition}
\newtheorem{example}[theorem]{Example}
\theoremstyle{remark}
\newtheorem{remark}[theorem]{Remark}
\newcommand{\dontprint}[1]\relax
\newcommand{\La}{\Lambda}
\newcommand{\Ga}{\Gamma}
\renewcommand{\P}{{\mathbb P}}
\newcommand{\A}{{\mathbb A}}
\newcommand{\wt}{\widetilde}
\newcommand{\ot}{\otimes}
\newcommand{\Hom}{\operatorname{Hom}}
\newcommand{\Aut}{\operatorname{Aut}}
\newcommand{\Ext}{\operatorname{Ext}}
\newcommand{\DD}{{\mathcal D}}
\newcommand{\FF}{{\mathcal F}}
\newcommand{\GG}{{\mathcal G}}
\newcommand{\II}{{\mathcal I}}
\newcommand{\NN}{{\mathcal N}}
\newcommand{\OO}{{\mathcal O}}
\newcommand{\UU}{{\mathcal U}}
\renewcommand{\SS}{{\mathcal S}}
\newcommand{\si}{\sigma}
\newcommand{\de}{\delta}
\newcommand{\sub}{\subset}
\newcommand{\Spec}{\operatorname{Spec}}
\newcommand{\Ber}{\operatorname{Ber}}
\newcommand{\lan}{\langle}
\newcommand{\ran}{\rangle}
\newcommand{\om}{\omega}
\newcommand{\la}{\lambda}
\renewcommand{\a}{\alpha}
\newcommand{\id}{\operatorname{id}}
\renewcommand{\th}{\theta}
\newcommand{\hra}{\hookrightarrow}
\newcommand{\eps}{\epsilon}
\newcommand{\Pic}{\operatorname{Pic}}
\newcommand{\tot}{\operatorname{tot}}
\newcommand{\GL}{\operatorname{GL}}
\numberwithin{equation}{section}
\title{$\A^{0|1}$-torsors, quotients by free $\A^{0|1}$-actions, and embeddings into $\Pi$-projective spaces and super-Grassmannians $G(1|1,n|n)$}
\author{Alexander Polishchuk}
\thanks{Partially supported by the NSF grant DMS-2001224, 
and within the framework of the HSE University Basic Research Program and by the Russian Academic Excellence Project `5-100'.}
\address{
    Department of Mathematics, 
    University of Oregon, 
    Eugene, OR 97403, USA; National Research University Higher School of Economics; and Korea Institute for 
    Advanced Study 
  }
  \email{apolish@uoregon.edu}
\begin{document}

\begin{abstract}
We study embeddability of superschemes into $\Pi$-projective spaces and into supergrassmannians $G(1|1,n|n)$. We give some criteria based on the relation with
$\A^{0|1}$-torsors and $\A^{0|1}$-fibrations. We also prove the existence of nice quotients for free actions of $\A^{0|1}$ on superschemes.
\end{abstract}

\maketitle

\section{Introduction}

A striking difference between supergeometry and the usual geometry is the absence of (super)projective embeddings for many natural proper superschemes,
e.g., for most supergrassmannians. This is explained by the fact that by considering bosonic quotients of superschemes (i.e., considering only even functions)
one often gets nontrivial nilpotent extensions of the corresponding reduced schemes, which tend to be nonprojective.

In some respects an adequate analog of projective schemes in supergeometry is provided by schemes embeddable into supergrassmannians. 
Recall that the supergrassmannian $G(a|b,m|n)$ parametrizes subspaces of dimension $a|b$ in a supervector space of dimension $m|n$.
Thus, by fixing possible values of $a$ and $b$ we get a hierarchy of spaces in supergeometry: for each $a|b$ we can consider {\it $a|b$-embeddable schemes}, i.e.,
those embeddable into $G(a|b,m|n)$ for some $m|n$. Note that the usual projectivity corresponds to $1|0$-embeddability.
One can also consider embeddability into other homogeneous superspaces. For example, Manin considered in \cite[V.6]{Manin} 
{\it $\Pi$-projective spaces} $\P^n_{\Pi}$ (aka {\it $\Pi$-symmetric superprojective space}) which are homogeneous spaces for the simple supergroups of type $Q$.
More generally, one has the $\Pi$-grassmannian $G\Pi(a|a,n|n)$ which is a closed subscheme in $G(a|a,n|n)$, parametrizing $\Pi$-symmetric subspaces in a superspace 
with a $\Pi$-symmetry (see \cite[V.6]{Manin}). Note that $\P^n_{\Pi}=G\Pi(1|1,n+1|n+1)$.

Even the first few stages of this hierarchy, namely $\Pi$-projectivity and $1|1$-embeddability are still poorly understood.
In the present paper we will contribute to their study by providing some general criteria and some new examples. 
Previous works in this direction mostly construct examples of non-projective, and sometimes non-$\Pi$-projective, superschemes (see e.g., \cite{CNR}, \cite{Noja-emb},\cite{PS}). 


It is well known that projectivity is equivalent to the existence of an (even) line bundle whose restriction to the bosonization is ample (see \cite{LPW}, \cite[Prop.\ A.2]{FKP}).
The natural approach to understanding embeddability into $\Pi$-projective spaces and into $G(1|1,n|n)$ 
is by studying $1|1$-bundles (equipped with a $\Pi$-symmetry if we are interested in morphisms to $\Pi$-projective spaces). 
Our main idea is to study $1|1$-bundles by looking at their projectivizations which are locally trivial $\A^{0|1}$-fibrations (since $\P^{1|1}=\A^{0|1}$). 
Similarly, to a $\Pi$-symmetric $1|1$ bundle one can associate an $\A^{0|1}$-torsor.
Thus, we are led to the study of $\A^{0|1}$-fibrations (resp., $\A^{0|1}$-torsors).
More generally, to a vector bundle of rank $a|b$ one can associate an $\A^{0|ab}$-fibration, however, $\A^{0|n}$-fibrations are much harder to study for $n>1$.

Some of the results of this paper are inspired by the result proved in \cite[Sec.\ 4.9]{BHRP}, stating that a superscheme is
embeddable into some supergrassmannian if and only if there exists an $\A^{0|n}$-fibration whose total space is projective.
Furthermore, any $1|1$-embeddable superscheme admits an $\A^{0|1}$-fibration with projective total space. Using this we will get criteria for a superscheme not to be
$1|1$-embeddable. For example, we prove that $\P^2_\Pi\times \P^2_\Pi$ and $G(1|1,m|m)\times G(1|1,n|n)$ for $m\ge 3$, $n\ge 3$, are 
not $1|1$-embeddable.  

On the other hand, we show that there is a close relation between $\Pi$-projectivity and the existence of an $\A^{0|1}$-torsor with projective total space.
This is due to the existence of a natural map
$$\P^{n|n+1}\to \P^n_\Pi$$
which has a structure of an $\A^{0|1}$-torsor. Namely, let  $(V,p)$ be an $(n+1)|(n+1)$-dimensional space with a $\Pi$-symmetry, then this
map sends a $1|0$-dimensional subspace $L\sub V$ to the $1|1$-dimensional $\Pi$-symmetric subspace $L+p(L)\sub V$.
Thus, given an embedding of $X$ into a $\Pi$-projective space, we get an $\A^{0|1}$-torsor over $X$ which has projective total space. We show that
conversely, under some extra assumptions, the existence of such an $\A^{0|1}$-torsor implies $\Pi$-projectivity (see Theorem \ref{Pi-proj-thm}).
This gives criteria for $\Pi$-projectivity.
For example, we show that the quotient of $\P^{m-1|m}\times \P^{n-1|n}$ by the diagonal $\A^{0|1}$-action is $\Pi$-projective (but not projective for $m\ge 3$, $n\ge 3$),
while $G(1|1,2|2)$ is not $\Pi$-projective.

We also consider the natural class of CY supervarieties of dimension $2|2$ associated with projective surfaces $S$ together with rank 2 vector bundles $V$ such that
$\det V\simeq \om_C$. In the case $S=\P^2$, it was shown in \cite{CNR} that all such supervarieties are embeddable into some supergrassmannian.
We give new examples of embeddability and non-embeddability for this class of supervarieties (see Example \ref{CY22-ex} and Prop.\ \ref{CY22-prop}).




One question that naturally arises in this context is the existence of nice quotients by actions of $\A^{0|n}$ on superschemes.
Even for $\A^{0|n}$-actions on affine superspaces there may not exist a nice quotient (e.g., the corresponding ring of invariants is not
necessarily finitely generated). We prove that for a {\it free} $\A^{0|n}$-action on a superscheme $X$ the quotient always exists as a superscheme (of
finite type if $X$ is of finite type), and the corresponding quotient map $X\to X/\A^{0|n}$ is an $\A^{0|n}$-torsor (see Theorem \ref{free-quot-thm}).
This result is an easy consequence of the affine case considered \cite{Zubkov} (in the case $X$ is smooth this is \cite[Thm.\ 1.8]{MO}). 
It can be viewed as an algebraic counterpart of Shander's theorem on rectifying non-vanishing homological vector fields on supermanifolds (see \cite{Shander}).


\medskip

\noindent
{\it Acknowledgments.} I am grateful to Arkady Vaintrob for useful discussions and to Emile Bouaziz for suggesting interesting examples of embeddable CY $2|2$ supervarieties (see
Proposition \ref{CY22-prop}).

\section{$\A^{0|1}$-torsors and $\A^{0|1}$-fibrations}

\subsection{Free actions of $\A^{0|1}$ and $\A^{0|1}$-torsors}


We denote by $\A^{0|1}$ the odd affine line with the standard group structure. This super group scheme is also often denoted as $\G_a^-$.

Let $S$ be a base superscheme. We work in the category of superschemes over $S$.
By definition, an action of $\A^{0|1}_S$ on $X$ is given by a morphism of superschemes
$$\si:\A^{0|1}_S\times_S X\to X$$
satisfying the usual axioms. Since such a morphism is identity on the underlying topological spaces,
we see that the notion of an $\A^{0|1}$-action is local. It is well known that an $A^{0|1}$-action is determined
by the corresponding homological vector field, which is an odd derivation $v$ on $\OO_X$, trivial on functions pulled back from $S$, such that $v^2=0$.
Namely, for $X=\Spec(A)$, the action map $\si$ is given by $\si^*(a)=a+\th\cdot v(a)$, where $\th$ is the
coordinate on $\A^{0|1}$. Equivalently, we can think of the vector field $v$ as an automorphism of $\A^{0|1}\times X$. Then 
the corresponding action is the composition of this automorphism with the projection to $X$.


\begin{prop}\label{free-action-prop} 
Let $X$ be a superscheme over $S$ equipped with an action of $\A^{0|1}_S$,
and let $v$ be the corresponding homological vector field on $X$.
Then the following conditions are equivalent:
\begin{enumerate}
\item the action of $\A^{0|1}_S$ on $X$ is free;
\item locally there exist functions $a_1,\ldots,a_n$ and $b_1,\ldots,b_n$, where $a_i$ and $b_i$ have opposite parity,
such that $\sum_{i=1}^n a_iv(b_i)=1$;
\item locally there exists an odd function $\th$ such that $v(\th)$ is invertible;
\item locally there exists an odd function $\th$ such that $v(\th)=1$;
\item we have the equality of subsheaves of $\OO_X$, $\ker(v:\OO_X\to \OO_X)=v(\OO_X)$.
\end{enumerate}
\end{prop}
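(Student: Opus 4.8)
The plan is to work Zariski-locally and reduce to the affine case $X=\Spec A$ over $S=\Spec R$, where $v$ is an odd $R$-linear derivation of $A$ with $v^2=0$. Since we are in characteristic zero, every odd element squares to zero, so any product of two odd elements is nilpotent; this elementary observation is the one non-formal ingredient and will be used repeatedly. I would then prove the equivalences by running the cycle $(1)\Leftrightarrow(2)$ and $(2)\Rightarrow(3)\Rightarrow(4)\Rightarrow(5)\Rightarrow(4)\Rightarrow(2)\Rightarrow(1)$.

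First I would settle $(1)\Leftrightarrow(2)$ by unwinding the definition of freeness. The action morphism $(\si,\operatorname{pr}):\A^{0|1}\times_S X\to X\times_S X$ corresponds on rings to the map $A\ot_R A\to A[\th]$ sending $a\ot b\mapsto(a+\th v(a))b$. One checks that its image is the subring $A\oplus\th\cdot(A\cdot v(A))$, so the morphism is a closed immersion, i.e.\ the action is free, precisely when $A\cdot v(A)=A$, that is, when $1=\sum_i a_iv(b_i)$ for suitable $a_i,b_i$; matching parities forces $a_i$ and $b_i$ to be opposite, which is exactly $(2)$. (Equivalently, one may argue that the only subgroup schemes of $\A^{0|1}$ over a field are trivial, so freeness amounts to $v$ being nowhere vanishing.)

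The heart of the argument is $(2)\Rightarrow(3)$. Starting from $1=\sum_i a_iv(b_i)$, I would split the sum according to the parity of $b_i$. For $b_i$ even, both $a_i$ and $v(b_i)$ are odd, so each such term is a product of two odd elements, hence nilpotent; therefore $u:=\sum_{b_i\ \mathrm{odd}}a_iv(b_i)=1-(\text{a nilpotent element})$ is a unit. Now set $\th:=\sum_{b_i\ \mathrm{odd}}a_ib_i$, an odd function. By the Leibniz rule $v(\th)=\sum_{b_i\ \mathrm{odd}}v(a_i)b_i+u$, and each $v(a_i)b_i$ is again a product of two odd elements, hence nilpotent; thus $v(\th)=u+(\text{a nilpotent element})$ is a unit, giving $(3)$.

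The remaining implications are formal manipulations with the square-zero differential $v$. For $(3)\Rightarrow(4)$: if $w:=v(\th)$ is a unit, then $v(w)=v^2(\th)=0$, hence $v(w^{-1})=0$ as well, so $\th':=w^{-1}\th$ satisfies $v(\th')=w^{-1}v(\th)=1$. For $(4)\Rightarrow(5)$: left multiplication by $\th$ is an odd operator $h$ with $vh+hv=v(\th)\cdot\id=\id$, so every $a\in\ker v$ equals $v(\th a)\in v(\OO_X)$, and together with $v(\OO_X)\subseteq\ker v$ (from $v^2=0$) this yields the equality in $(5)$. Finally $(5)\Rightarrow(4)$ is immediate, since $1\in\ker v=v(\OO_X)$ locally produces an odd $\th$ with $v(\th)=1$, after which $(4)\Rightarrow(2)\Rightarrow(1)$ close the loop. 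I expect the main obstacle to be the bookkeeping in $(1)\Leftrightarrow(2)$ — fixing the correct notion of freeness and computing the image of the action map — together with the nilpotency trick in $(2)\Rightarrow(3)$, which is what collapses the sum $1=\sum_i a_iv(b_i)$ into a single rectifying coordinate $\th$.
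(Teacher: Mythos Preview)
Your argument is correct and follows essentially the same route as the paper: the implications $(2)\Leftrightarrow(3)\Leftrightarrow(4)\Leftrightarrow(5)$ are handled identically (parity splitting plus nilpotency of products of two odd elements for $(2)\Rightarrow(3)$; dividing by $v(\th)$ for $(3)\Rightarrow(4)$; the contracting homotopy $f\mapsto \th f$ for $(4)\Leftrightarrow(5)$). The one genuine difference is in $(1)\Leftrightarrow(2)$: you compute the image of the ring map $A\otimes_R A\to A[\th]$ directly as $A\oplus\th\cdot(A\,v(A))$, whereas the paper fits this map into a morphism of short exact sequences with kernel $\Omega_{X/S}\to\Pi\OO_X$ and reads off that surjectivity of $\kappa$ is equivalent to surjectivity of $v:\Omega_{X/S}\to\Pi\OO_X$. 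Your approach is more elementary; the paper's has the advantage of making the geometric meaning transparent (the condition is exactly that the odd vector field $v$ is nowhere vanishing in the cotangent sense), which is useful in the smooth case. One small point: you invoke ``characteristic zero'' to get $\th^2=0$, but all that is needed is that $2$ is invertible (so that odd elements square to zero), or more generally that the ideal $\NN$ generated by odd functions is locally nilpotent, which is the standing convention for superschemes; the paper phrases the same step as ``$v(\sum a_ib_i)\equiv 1\bmod \NN^2$, so it is invertible'' without any characteristic hypothesis.
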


\begin{proof}
(1)$\iff$(2). The morphism
$$\A^{0|1}\times_S X\to X\times_S X: (g,x)\mapsto (gx,x)$$
is given on the level of topological spaces by the diagonal embedding, and locally
corresponds to the ring homomoprhism
$$\kappa:\OO_X\ot_{\OO_S} \OO_X\to \OO_X[\eps]: f_1\ot f_2\mapsto (f_1+\eps v(f_1))\cdot f_2.$$
Thus, the action of $\A^{0|1}$ is free if and only if $\kappa$ is surjective.
Now we notice that $\kappa$ fits into a morphism of exact sequences
\begin{diagram}
0 &\rTo{}& \Om_{X/S} &\rTo{}& \OO_X\ot_{\OO_S}\OO_X &\rTo{}& \OO_X &\rTo{}& 0\\
&&\dTo{v}&&\dTo{\kappa}&&\dTo{\id}\\
0 &\rTo{}& \Pi\OO_X &\rTo{\eps\cdot}& \OO_X[\eps] &\rTo{}& \OO_X &\rTo{}& 0
\end{diagram}
Hence, $\kappa$ is surjective if and only if $v:\Om_{X/S}\to \Pi\OO_X$ is surjective.
This is equivalent to the existence of an odd differential 
$$\om=\sum a_idb_i,$$
such that $\lan \om,v\ran=1$.

\noindent
(2)$\iff$(3).
Assume that we have
$$\sum a_iv(b_i)+\sum a'_jv(b'_j)=1,$$
where $(a_i)$ and $(b'_j)$ are even, $(a'_j)$ and $(b_i)$ are odd.
This implies that
$$v(\sum a_ib_i)\equiv 1\mod \NN^2,$$
so it is invertible.

Conversely, if $v(b_1)$ is invertible then $a_1v(b_1)=1$ for some $a_1$.

\noindent
(3)$\iff$(4).
Assume that $v(\th)=f$ is invertible. Then $v(f)=0$, so $v(f^{-1})=0$. This implies that
$$v(f^{-1}\th)=1.$$
The converse is clear.

\noindent
(4)$\iff$(5). Assume that we have odd $\th$ such that $v(\th)=1$.
Then for any $f$ with $v(f)=0$ we have
$$f=v(\th f).$$
The converse is clear.
\end{proof}

\begin{remark} In the case when $X$ is smooth over $S$, condition (2) from Proposition \ref{free-action-prop} is equivalent to
the condition that $v$ is everywhere non-vanishing, i.e., gives an embedding of a subbundle $\Pi\OO_X\hra T_{X/S}$.
\end{remark}

The following example involving the $\Pi$-symmetric superprojective space is important for us.

\begin{example}\label{proj-action-ex}
Let $V$ be the superspace of dimension $n+1|n+1$ equipped with a $\Pi$-symmetry $p:V\to \Pi V$, i.e.,
an odd endomorphism $p:V\to V$ such that $p^2=-\id$ (we can assume that $V$ has coordinates $(x_0,\ldots,x_n;\th_0,\ldots,\th_n)$ and $p(x_i)=\th_i$, $p(\th_i)=-x_i$).
We have a free action of $\A^{0|1}$ on $G(1|0,V)=\P^{n|n+1}$ given In homogeneous coordinates by
$$\psi(\ldots:x_i:\ldots;\ldots:\th_i:\ldots)=(\ldots:x_i+\psi\th_i:\ldots;\ldots:\th_i-\psi x_i:\ldots)$$
such that the quotient is the $\Pi$-projective space $\P^n_\Pi$. 
The corresponding $\A^{0|1}$-torsor map
$$\P^{n|n+1}\to \P^n_{\Pi}$$
sends a $1|0$-dimensional subspace $L\sub V$ to the $\Pi$-symmetric $1|1$-dimensional subspace $L+p(L)$.
\end{example}

\subsection{$v$-connections}

Let $v$ be a homological vector field on a superscheme $X$, and let $\FF$ be a quasicoherent sheaf on $X$.
A {\it $v$-connection} on $\FF$ is a map of sheaves of abelian groups $\nabla:\FF\to \Pi\FF$
satisfying
$$\nabla(fs)=v(f)s+(-1)^{|f|}f\nabla(s),$$
for a function $f$.

The curvature $c(\nabla)$ of a $v$-connection is the $\OO_X$-linear operator $\nabla^2:\FF\to \FF$.
A $v$-connection is called {\it flat} if $c(\nabla)=0$.

Note that $c(\nabla)$ commutes with $\nabla$. In particular, if $\nabla_L$ is a $v$-connection on a line bundle $L$
then $c(\nabla_L)$ can be viewed as an even function which satisfies
$$v(c(\nabla_L))=0.$$

If $\nabla$ is a $v$-connection on $\FF$ then any other $v$-connection is given by 
$$\nabla'(s)=\nabla(s)+\phi(s),$$
where $\phi$ is an $\OO$-linear map $\FF\to \Pi\FF$.
In particular, the set of $v$-connections on a line bundle $L$ (if non-empty) is a principal homogeneous space for $H^0(X,\OO_X)^-$.

It is easy to check that for a $v$-connection $\nabla$ on a line bundle $L$ and an odd global function $\phi$
one has
$$c(\nabla+\phi)=c(\nabla)+v(\phi).$$


The tensor product of $v$-connections is defined in the usual way (taking into account that $\nabla$ is odd in the sign convention).

\begin{lemma}\label{v-conn-lem}
Let $v$ be a homological vector field on $X$. 

\noindent
(i) If the map $H^1(X,\ker(v))^-\to H^1(X,\OO_X)^-$ is zero (e.g., if $H^1(X,\OO_X)^-=0$) then every line bundle on $X$ admits a $v$-connection.

\noindent
(ii) Assume that $H^0(X,\OO_X)^-=0$.
Let $L$ be a line bundle on $X$ admitting a $v$-connection $\nabla$.
Then $L$ has a unique $v$-connection. 
In particular, if $c(\nabla)\neq 0$ then $L$ does not admit a flat $v$-connection.
\end{lemma}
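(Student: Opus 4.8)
The plan is to treat (i) as a \v{C}ech obstruction computation and (ii) as an immediate consequence of the torsor structure of the set of $v$-connections. Throughout I use the fact, recorded just before the lemma, that the difference of two $v$-connections on $L$ is $\OO$-linear, so as an odd map $L\to\Pi L$ it is multiplication by an odd local function; thus the sheaf of local $v$-connections on $L$ is a torsor under $\Pi\OO_X$, and a global $v$-connection is exactly a global section of this torsor.

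For (i), first choose an open cover $\{U_i\}$ trivializing $L$, say $L|_{U_i}=\OO_{U_i}\cdot e_i$, and on each $U_i$ define the canonical connection by $\nabla_i(fe_i):=v(f)e_i$. The Leibniz rule holds by construction, and since $v^2=0$ one checks directly that $\nabla_i$ is flat, $c(\nabla_i)=0$. On an overlap $U_{ij}$ the two connections differ by an odd function, $\nabla_j-\nabla_i=\phi_{ij}\in\OO_X(U_{ij})^-$. The key point is that, applying the curvature formula $c(\nabla+\phi)=c(\nabla)+v(\phi)$ and using $c(\nabla_i)=c(\nabla_j)=0$, one gets $v(\phi_{ij})=0$, i.e.\ $\phi_{ij}\in\ker(v)^-$. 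The $\{\phi_{ij}\}$ satisfy the \v{C}ech cocycle identity, so they define a class $[\phi]\in H^1(X,\ker(v))^-$.

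A global $v$-connection exists precisely when the $\nabla_i$ can be corrected by odd functions $\psi_i\in\OO_X(U_i)^-$ so as to agree on overlaps, that is, when $\phi_{ij}=\psi_i-\psi_j$ for some such $\psi_i$; this says exactly that the image of $[\phi]$ under the natural map $H^1(X,\ker(v))^-\to H^1(X,\OO_X)^-$ vanishes. Under the hypothesis this map is identically zero, so the image vanishes and the desired global $v$-connection exists. The special case $H^1(X,\OO_X)^-=0$ is immediate, since then the target group is zero.

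For (ii), the canonical local connections constructed above show that local $v$-connections always exist, so by assumption a global $\nabla$ exists and the set of all global $v$-connections is a torsor under $H^0(X,\OO_X)^-=0$; hence $\nabla$ is unique. If $L$ also carried a flat $v$-connection $\nabla'$, uniqueness would give $\nabla'=\nabla$ and therefore $c(\nabla)=c(\nabla')=0$, contradicting $c(\nabla)\neq 0$. The conceptual heart of (i), and the step I expect to be the main obstacle, is recognizing that the local connections can be chosen flat via $v^2=0$: this is exactly what confines the gluing cocycle to $\ker(v)$ and explains why the correct hypothesis is vanishing of the map $H^1(X,\ker(v))^-\to H^1(X,\OO_X)^-$ rather than merely of $H^1(X,\OO_X)^-$. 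The only routine care needed is tracking the parity signs in the Leibniz rule and in the cocycle relation for the odd functions $\phi_{ij}$.
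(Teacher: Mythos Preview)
Your argument is correct and follows essentially the same route as the paper. The paper phrases (i) more tersely, writing the obstruction directly as the image of $[L]$ under the map on $H^1$ induced by the sheaf morphism $\OO_X^{*,+}\to\OO_X^-$, $f\mapsto v(f)f^{-1}$, and then observing $v(v(f)f^{-1})=0$ so that this morphism factors through $\ker(v)^-$; your \v{C}ech computation with local trivial connections $\nabla_i$ unpacks exactly this, since the transition cocycle $\phi_{ij}$ you obtain is $v(g_{ij})g_{ij}^{-1}$ for the transition functions $g_{ij}$, and your use of the curvature formula to see $\phi_{ij}\in\ker(v)^-$ is the same verification. Part (ii) is identical in both.
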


\begin{proof}
(i) A standard calculation shows that the obstruction to the existence of a $v$-connection on a line bundle $L$ is the image of $[L]\in H^1(X,\OO_X^{*,+})$ under the map
$$H^1(X,\OO_X^{*,+})\to H^1(X,\OO_X)^-,$$
induced by the morphism of sheaves
$$\OO_X^{*,+}\to \OO_X^-: f\mapsto v(f)f^{-1}.$$
It remains to observe that this morphism factors through $\ker(v)^-\sub \OO_X^-$ since $v(v(f)f^{-1})=0$.
Thus, if the latter embedding induces the zero map on $H^1$,
the obstruction vanishes.

\noindent
(ii) Any other $v$-connection has form $\nabla+\phi$ where $\phi$ is a global odd function on $X$, so $\phi=0$ by the assumption.
\end{proof}

\begin{lemma}\label{v-conn-equiv-structure-lem}
Let $v$ be a homological vector field on $X$. To give a flat $v$-connection $\nabla$ on a quasicoherent sheaf $\FF$ over $X$ is equivalent to 
equipping $\FF$ with an equivariant structure with respect to the corresponding $\A^{0|1}$-action on $X$.
\end{lemma}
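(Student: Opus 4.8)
The plan is to unwind the definition of an equivariant structure in coordinates adapted to the odd coordinate $\theta$ on $\A^{0|1}$ and to match it term by term with the connection axioms. Recall that the action $\sigma\colon\A^{0|1}_S\times_S X\to X$ is encoded by $\sigma^*(a)=a+\theta v(a)$; let $p\colon\A^{0|1}_S\times_S X\to X$ denote the projection. An equivariant structure on $\FF$ is an isomorphism of $\OO_{\A^{0|1}_S\times_S X}=\OO_X[\theta]$-modules $\lambda\colon\sigma^*\FF\to p^*\FF$ whose restriction to $\theta=0$ is the identity (the unit axiom) and which satisfies the cocycle condition over $\A^{0|1}_S\times_S\A^{0|1}_S\times_S X$. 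Working locally on $X=\Spec A$, I would write $\sigma^*\FF=\OO_X[\theta]\ot_{\sigma^*,\OO_X}\FF$ (with the sole relation $1\ot(as)=\sigma^*(a)\ot s=(a+\theta v(a))\cdot(1\ot s)$) and $p^*\FF=\FF[\theta]$, and track where $\lambda$ sends the generators $1\ot s$.

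First I would extract the connection. Since $\lambda$ is $\OO_X[\theta]$-linear and even, the unit axiom forces $\lambda(1\ot s)=s+\theta\cdot D(s)$ for a uniquely determined odd additive endomorphism $D$ of $\FF$. Applying $\lambda$ to both sides of the defining relation and comparing the coefficients of $\theta$ (using $\theta^2=0$ and the Koszul sign when moving $a$ past $\theta$) yields exactly
$$D(as)=v(a)s+(-1)^{|a|}aD(s).$$
Thus $D$, transported across the identification $\Pi\FF\simeq\theta\cdot\FF$, is precisely a $v$-connection $\nabla$ on $\FF$.

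Next I would read off flatness from the cocycle condition. Using the additive group law $\theta\mapsto\theta_1+\theta_2$ of $\A^{0|1}=\G_a^-$, the composite of the copy of $\lambda$ with parameter $\theta_1$ followed by the copy with parameter $\theta_2$ sends $s$ to $s+(\theta_1+\theta_2)D(s)+\theta_1\theta_2D^2(s)$, while the single isomorphism with parameter $\theta_1+\theta_2$ sends $s$ to $s+(\theta_1+\theta_2)D(s)$. The cocycle condition equates these, and since $\theta_1\theta_2\neq 0$ it holds if and only if $D^2=0$, i.e.\ if and only if $\nabla$ is flat. This is the sheaf-level shadow of the computation showing that the iterated action is well defined precisely because $v^2=0$.

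Conversely, given a flat $v$-connection $\nabla$, I would define $\lambda(1\ot s)=s+\theta\nabla(s)$ and check that it is well defined (the Leibniz rule guarantees compatibility with the relation above), $\OO_X[\theta]$-linear, an isomorphism with explicit inverse $s\mapsto s-\theta\nabla(s)$ (again using $\theta^2=0$), and that it satisfies the unit and, by flatness, the cocycle axioms. The two constructions are manifestly inverse to one another. I expect the main subtlety to be the bookkeeping of Koszul signs — making the odd map $D$ match the sign convention in $\nabla(fs)=v(f)s+(-1)^{|f|}f\nabla(s)$ — together with verifying that this purely local correspondence is canonical, hence glues to the asserted equivalence for an arbitrary quasicoherent $\FF$ over $X$.
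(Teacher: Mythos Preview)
Your proposal is correct and is precisely the straightforward unwinding the paper has in mind: the paper's own proof consists of the single sentence ``The proof is straightforward,'' and what you have written is exactly the natural way to make that sentence explicit. The only thing to watch is the Koszul bookkeeping you already flagged, but your Leibniz computation $D(as)=v(a)s+(-1)^{|a|}aD(s)$ and the cocycle identity forcing $D^2=0$ are both on the nose.
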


The proof is straightforward.

From Lemma \ref{v-conn-equiv-structure-lem} one derives in a standard way (cf.\ \cite[Sec.\ 12, Thm.\ 1]{Mum-ab}) the following result.

\begin{prop}
Let $X\to Y$ be $\A^{0|1}$-torsor and let $v$ be the corresponding homological vector field on $X$.
The the category of coherent sheaves on $Y$ is equivalent to the category of coherent sheaves on $X$ with a flat $v$-connection.
\end{prop}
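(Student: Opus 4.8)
The plan is to realize both functors of the desired equivalence explicitly and to reduce the verification that they are quasi-inverse to a local computation on $Y$. By Lemma \ref{v-conn-equiv-structure-lem} a flat $v$-connection on a sheaf on $X$ is the same as an $\A^{0|1}$-equivariant structure, so the statement is exactly faithfully flat descent along the torsor $\pi:X\to Y$: coherent sheaves on $Y$ should match coherent sheaves on $X$ equipped with descent data for $\pi$, and for a torsor the identification $X\times_Y X\cong \A^{0|1}\times X$ turns descent data into equivariant structures. I would phrase the two functors as follows. In one direction, send a coherent sheaf $\FF$ on $Y$ to $\pi^*\FF=\OO_X\ot_{\OO_Y}\FF$ equipped with the tautological flat $v$-connection $\nabla=v\ot\id$. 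In the other direction, send a coherent sheaf $\GG$ with flat $v$-connection $\nabla$ to its sheaf of horizontal sections $\GG^\nabla=\ker(\nabla)$, which is naturally a module over $\ker(v)$. Here I would first record that, since $\A^{0|1}$ has a single point, $\pi$ is a homeomorphism on underlying spaces, so $X$ and $Y$ share the same topological space and $\OO_Y=\ker(v)\sub\OO_X$ (consistent with condition (5) of Proposition \ref{free-action-prop}); in particular $\GG^\nabla$ is genuinely an $\OO_Y$-module.

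Next I would check that the two composites are naturally isomorphic to the identity, and this is where the work lies. Both statements are local on $Y$, so by the definition of an $\A^{0|1}$-torsor (equivalently condition (4) of Proposition \ref{free-action-prop}) I may assume the torsor is trivial, $X|_U\cong \A^{0|1}\times U$ with $\OO_X=\OO_U[\th]$ and $v=\partial_\th$. The key assertion is then purely algebraic: for a coherent $\OO_U[\th]$-module $M$ with a flat $v$-connection $\nabla$, the natural map $\OO_U[\th]\ot_{\OO_U}M^\nabla\to M$ is an isomorphism. The mechanism is that the two odd operators $e=\th\cdot$ and $\nabla$ on $M$ satisfy $e^2=0$, $\nabla^2=0$ and $e\nabla+\nabla e=\id$ (the last because $v(\th)=1$), i.e.\ they form a contracting homotopy. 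From this one reads off that every $m\in M$ decomposes as $m=\nabla(\th m)+\th\,\nabla(m)$, with both $\nabla(\th m)$ and $\nabla(m)$ horizontal, giving surjectivity; injectivity follows by applying $\nabla$ to a relation $n_1+\th n_2=0$ with $n_i$ horizontal, which forces $n_2=0$ and hence $n_1=0$. Thus $M^\nabla$ is coherent over $\OO_U$ and $M\cong\OO_U[\th]\ot_{\OO_U}M^\nabla$ with its standard connection, which is precisely the statement that both composites are the identity in the trivial case.

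The main obstacle is exactly this local trivialization statement---that a flat $v$-connection forces the sheaf to be a pullback---together with the bookkeeping needed to globalize. For globalization I would note that the two functors are defined globally and canonically, and that a natural transformation which is an isomorphism locally is an isomorphism; alternatively, one can invoke faithfully flat descent directly, since $\OO_U\to\OO_U[\th]$ is faithfully flat (free of rank $1|1$) and $X\times_Y X\cong\A^{0|1}\times X$ identifies the descent category with the category of $\A^{0|1}$-equivariant sheaves, which is the category of sheaves with flat $v$-connection by Lemma \ref{v-conn-equiv-structure-lem}. This is the \emph{standard way} alluded to in the reference to \cite[Sec.\ 12, Thm.\ 1]{Mum-ab}.
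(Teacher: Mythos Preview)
Your proposal is correct and follows the same route the paper indicates: the paper's proof consists entirely of the sentence ``from Lemma \ref{v-conn-equiv-structure-lem} one derives in a standard way (cf.\ \cite[Sec.\ 12, Thm.\ 1]{Mum-ab}),'' and your argument is exactly an explicit unpacking of that standard descent along the torsor $\pi$, with the local computation playing the role of Lemma \ref{affine-odd-derivation-lem} for modules rather than just for the structure sheaf. The contracting homotopy identity $\nabla e+e\nabla=\id$ is the right engine, and your injectivity/surjectivity checks are clean; nothing is missing.
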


By the Picard group of a superscheme we mean the group of isomorphism classes of line bundles of rank $1|0$.

\begin{cor}
Let $\pi:X\to Y$ be $\A^{0|1}$-torsor and let $v$ be the corresponding homological vector field on $X$.
Consider the induced map on the Picard groups $\pi^*:\Pic(Y)\to \Pic(X)$.
Then the image of $\pi^*$ consists of the classes of line bundles on $X$ that can be equipped with a flat $v$-connection,
while the kernel of $\pi^*$ can be identified with the quotient
$$H^0(X,\ker(v)^-)/\{f^{-1}v(f) \ |\ f\in H^0(X,\OO^*)^+\}.$$
\end{cor}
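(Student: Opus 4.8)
The plan is to derive both statements from the equivalence of the preceding Proposition, which sends a coherent sheaf $M$ on $Y$ to $\pi^*M$ equipped with the flat $v$-connection $\nabla_M$ corresponding, via Lemma \ref{v-conn-equiv-structure-lem}, to its canonical $\A^{0|1}$-equivariant (descent) structure. Since an $\A^{0|1}$-torsor is affine and faithfully flat, the property of being a line bundle of rank $1|0$ both ascends and descends along $\pi^*$; hence this equivalence restricts to an equivalence between line bundles on $Y$ and line bundles on $X$ admitting a flat $v$-connection. The description of $\im(\pi^*)$ is then immediate: if $[L]=\pi^*[M]$ then $L\cong\pi^*M$ carries the flat connection $\nabla_M$, and conversely, if a line bundle $L$ admits a flat $v$-connection $\nabla$, then $(L,\nabla)$ lies in the essential image, so $(L,\nabla)\cong(\pi^*M,\nabla_M)$ for some line bundle $M$ on $Y$, whence $[L]=\pi^*[M]$.

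For the kernel, I would observe that $[M]\in\ker(\pi^*)$ exactly when $\pi^*M\cong\OO_X$ as a line bundle, i.e.\ when the underlying bundle of $(\pi^*M,\nabla_M)$ is trivial. Using full faithfulness of the equivalence, this identifies $\ker(\pi^*)$ with the set of isomorphism classes of flat $v$-connections on the trivial bundle $\OO_X$. I would fix as basepoint the connection $\nabla_0$ with $\nabla_0(1)=0$, so that $\nabla_0(f)=v(f)$; this is flat and corresponds to $\OO_Y$. By the principal-homogeneous-space description recalled before the statement, every $v$-connection on $\OO_X$ has the form $\nabla_0+\phi$ with $\phi\in H^0(X,\OO_X)^-$, and by the curvature formula $c(\nabla_0+\phi)=c(\nabla_0)+v(\phi)=v(\phi)$ flatness amounts to $v(\phi)=0$, that is $\phi\in H^0(X,\ker(v)^-)$. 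This produces the numerator, and since $\nabla_0$ is multiplicative under tensor product while the $\phi$'s add, $[M]\mapsto\phi$ is a group homomorphism.

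It then remains to determine when two connections $\nabla_0+\phi$ and $\nabla_0+\phi'$ give isomorphic objects, i.e.\ correspond to the same $M$. An isomorphism of line bundles $\OO_X\to\OO_X$ is multiplication by an even invertible global function $f\in H^0(X,\OO^*)^+$; imposing compatibility with the two connections on the generator $1$ and using $\nabla_0(f)=v(f)$ yields $\phi-\phi'=f^{-1}v(f)$. A short check that $f\mapsto f^{-1}v(f)$ is a homomorphism $H^0(X,\OO^*)^+\to H^0(X,\ker(v)^-)$ (so that its image is a subgroup and the quotient is a group, compatibly with the group homomorphism $[M]\mapsto\phi$) gives $\ker(\pi^*)\cong H^0(X,\ker(v)^-)/\{f^{-1}v(f)\mid f\in H^0(X,\OO^*)^+\}$, as claimed.

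I expect the only delicate point to be bookkeeping of the super sign conventions (the oddness of $\nabla$ and of $\phi$, and the Leibniz signs in $v$) in the final compatibility computation, since it is precisely these signs that pin down the exact form $f^{-1}v(f)$ of the denominator; everything else is a formal consequence of the equivalence of categories and the principal-homogeneous-space structure already established.
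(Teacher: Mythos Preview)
Your proposal is correct and follows essentially the same route as the paper's proof: the image statement is deduced from the equivalence of categories in the preceding Proposition, and the kernel is identified with isomorphism classes of flat $v$-connections on $\OO_X$, parametrized by $\phi\in H^0(X,\ker(v)^-)$ modulo the gauge action $\phi\mapsto \phi+f^{-1}v(f)$. The paper's proof is more telegraphic (it just writes the conjugation formula $f^{-1}(v+\phi)f=v+\phi+f^{-1}v(f)$), but the argument is the same; your additional remarks on the group-homomorphism structure are correct and merely make explicit what the word ``identified'' requires.
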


\begin{proof} The statement about the image of $\pi^*$ is clear. The kernel of $\pi^*$ corresponds to isomorphism classes of flat connections on $\OO_X$.
Such connections are given by $\nabla=v+\phi$, where $\phi$ is an odd function and $v(\phi)$. Furthermore, for an invertible even function $f$, we have
$$f^{-1}(v+\phi)f=v+\phi+f^{-1}v(f).$$
\end{proof}

\subsection{Line bundles over $\A^{0|1}$-torsors, $\Pi$-symmetric $1|1$-bundles and the odd Heisenberg group}

Recall the supergroup scheme $GQ(1)\sub GL(1|1)$ (see \cite[Sec.\ 1.8.4]{BL}): its $A$-valued point is the set of $a_0+a_1\in A^*$. 
The embedding $GQ(1)\hra GL(1|1)$ is given by
$$a_0+a_1\mapsto \left(\begin{matrix} a_0 & a_1 \\ -a_1 & a_0 \end{matrix}\right)$$
(here we follow the convention that the matrix $(a_{ij})$ of a linear supertransformation $T$ is defined by $T(e_j)=\sum e_i a_{ij}$; this explains why our sign is different say
from that in \cite{MPV}).

There is a natural homomorphism 
$$GQ(1)\to\A^{0|1}: a_0+a_1\mapsto a_0^{-1}a_1$$ 
Its kernel is a central subgroup of $GQ(1)$ isomorphic to $\G_m=GL(1|0)$. Thus, we have a central extension sequence
\begin{equation}\label{GQ1-central-ext-seq}
1\to \G_m\to GQ(1)\to \A^{0|1}\to 0.
\end{equation}
Note that there is a splitting $a_1\mapsto 1+a_1$ of the projection $GQ(1)\to \A^{0|1}$ (not compatible with the group laws). 
The corresponding $2$-cocycle of $\A^{0|1}$ with values in $\G_m$ is determined from
$$(1+a)(1+b)=1+a+b+ab=(1+ab)(1+(a+b)),$$
so it is given by $c(a,b)=1+ab$.

We would like to view $GQ(1)$ as an {\it odd Heisenberg group}, a central extension of $\A^{0|1}$ by $\G_m$.
\footnote{In \cite{MPV} the group $GQ(1)$ is denoted by $\G_m$ (and what we denote by $\G_m$ is denoted by $\G_m^{1|0}$).}

\begin{prop}\label{Heis-v-conn-prop} 
Let $X$ be a superscheme with an action of $\A^{0|1}$ and let $v$ be the corresponding homological vector field.
We let $GQ(1)$ act on $X$ through the projection $GQ(1)\to \A^{0|1}$.
For a quasicoherent sheaf $\FF$ over $X$ there is a natural bijection between the set of $v$-connections $\nabla$ on $\FF$
with $c(\nabla)=n\cdot\id$, where $n\in \Z$, and weight-$n$ actions of $GQ(1)$ on $\FF$ (i.e., such that $\G_m\sub GQ(1)$ acts through the character
$\la\mapsto \la^n$).
\end{prop}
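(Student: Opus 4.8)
The plan is to write a weight-$n$ $GQ(1)$-equivariant structure on $\FF$ explicitly in the coordinates $t$ (even, invertible) and $\tau$ (odd) on $GQ(1)$ corresponding to $a_0$ and $a_1$, and then read off a $v$-connection from it. Since $GQ(1)$ acts on $X$ through $GQ(1)\to\A^{0|1}$, $t+\tau\mapsto t^{-1}\tau$, the action map $GQ(1)\times X\to X$ factors as $GQ(1)\times X\to\A^{0|1}\times X\to X$ and is given on functions by $a\mapsto a+\theta\,v(a)$ with $\theta:=t^{-1}\tau$. An equivariant structure is then an isomorphism $\rho$ between the two pullbacks of $\FF$ to $GQ(1)\times X$ satisfying the usual cocycle condition over $GQ(1)\times GQ(1)\times X$, and the weight-$n$ requirement says that its restriction along $\G_m\hookrightarrow GQ(1)$ (the specialization $\tau=0$, where the action on $X$ becomes trivial) is multiplication by $t^n$. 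Because of this normalization, together with $\G_m$-homogeneity (the odd coordinate $\tau$ having weight one), every such $\rho$ must have the form
\[
\rho(s)=t^n\bigl(s+\theta\,\nabla(s)\bigr)=t^n s+t^{n-1}\tau\,\nabla(s),
\]
for a unique odd map $\nabla\colon\FF\to\Pi\FF$; parity preservation of $\rho$ forces $\nabla$ to be odd. I would take this formula as the dictionary between the two sides, the factor $t^n$ encoding the $\G_m$-weight and $1+\theta\nabla$ encoding the $\A^{0|1}$-flow.

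First I would verify that $\rho$ is semilinear over the action map exactly when $\nabla$ satisfies the Leibniz rule of a $v$-connection: expanding $\rho(as)=(a+\theta v(a))\,t^n(s+\theta\nabla(s))$ and using $\theta^2=0$, the coefficient of $\theta$ forces $\nabla(as)=v(a)s+(-1)^{|a|}a\nabla(s)$. The heart of the argument is the cocycle condition, which I would check by a formal two-parameter computation. Writing $g=t+\tau$, $g'=t'+\tau'$, the group law gives $gg'=T+\Theta$ with even part $T=tt'+\tau\tau'$ and odd part $\Theta=t\tau'+t'\tau$ (so that $T^{-1}\Theta=t^{-1}\tau+t'^{-1}\tau'$ recovers the additive law on $\A^{0|1}$), whence on one side $\rho_{gg'}(s)=T^n s+T^{n-1}\Theta\,\nabla(s)$, while composing the two structures produces, after commuting the odd operator $\nabla$ past the odd parameter $\tau'$, a term proportional to $\tau\tau'\,\nabla^2(s)$. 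All terms involving a single $\tau$ or $\tau'$ match automatically; the only new constraint comes from the $\tau\tau'$-coefficients, where the expansion $T^n=(tt')^n+n(tt')^{n-1}\tau\tau'$ contributes a factor $n$ (the derivative of the $n$-th power, i.e.\ precisely the weight-$n$ action of $\G_m$ felt through the nontrivial extension cocycle $1+ab$). Matching the two coefficients then yields $\nabla^2=n\cdot\id$, that is $c(\nabla)=n\cdot\id$.

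Finally I would run the construction backwards: given a $v$-connection $\nabla$ with $c(\nabla)=n\cdot\id$, the displayed formula defines an isomorphism $\rho$ whose semilinearity and cocycle identity are guaranteed by the same two computations, hence a weight-$n$ $GQ(1)$-equivariant structure; the two assignments are manifestly inverse and natural in $\FF$, since each is encoded by the single operator $\nabla$. As a sanity check, the case $n=0$ recovers Lemma \ref{v-conn-equiv-structure-lem}, a flat $v$-connection being the same as an equivariant structure for the quotient $\A^{0|1}=GQ(1)/\G_m$. The step I expect to be the main obstacle is the sign bookkeeping in the cocycle computation: one must track the Koszul signs incurred when commuting $\tau$, $\tau'$ and $\nabla$ past one another, and pin down the convention for the group law of $GQ(1)$ (and the direction of the action) so that the curvature is identified with $+n$ rather than $-n$, in accordance with the cocycle $c(a,b)=1+ab$.
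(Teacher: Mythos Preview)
Your proposal is correct. The paper's own proof is a single sentence: it observes that a weight-$n$ action of $GQ(1)$ is the same as a projective action of $\A^{0|1}$ with central character $\la\mapsto\la^n$, that a $v$-connection $\nabla$ is exactly a lift of the infinitesimal generator $v$ to $\FF$, and that the relation $[\nabla,\nabla]=n\cdot\id$ is precisely what fixes the central character. In other words, the paper invokes the standard dictionary between projective representations and representations of a central extension and leaves the verification to the reader.

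Your route is genuinely different in style: you write the equivariant structure $\rho$ explicitly in coordinates $(t,\tau)$ on $GQ(1)$, use the weight-$n$ restriction to $\G_m$ together with the cocycle condition on $\G_m\times GQ(1)$ to pin down the shape $\rho(s)=t^n s+t^{n-1}\tau\,\nabla(s)$, and then match semilinearity to the Leibniz rule and the full cocycle to $\nabla^2=n\cdot\id$ via the binomial expansion $T^n=(tt')^n+n(tt')^{n-1}\tau\tau'$. This is the hands-on unpacking of the paper's conceptual statement. What it buys you is a self-contained argument that does not appeal to any background on projective actions or central extensions, and it makes transparent exactly where the integer $n$ enters (as the derivative of $t\mapsto t^n$, coming from the $2$-cocycle $1+ab$). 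What the paper's phrasing buys is brevity and a structural explanation of why the curvature should match the central weight. Your caveat about the sign of the curvature depending on the chosen direction of the action and the order in the group law is apt and is the only point that needs care when you carry out the computation in full.
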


\begin{proof} This is clear if we think of the action of $GQ(1)$ as a projective action of $\A^{0|1}$: a $v$-connection $\nabla$ lifts the action of $v$ to $\FF$
and the condition $[\nabla,\nabla]=n\id$ precisely means that we get an action of the central extension with the central character $\la\mapsto \la^n$.
\end{proof}

\begin{prop}\label{GQ1-torsor-prop} 
Let $X$ be a superscheme. There is a natural equivalence between the following groupoids:
\begin{itemize}
\item $\Pi$-symmetric $1|1$-bundles on $X$;
\item data $(\wt{X}\to X, L, \si)$, consisting of an $\A^{0|1}$-torsor $\wt{X}\to X$, an even line bundle $L$ over $\wt{X}$ and a weight-$1$
action $\si$ of $GQ(1)$ on $L$ compatible with the action of $\A^{0|1}$ on $\wt{X}$.
\end{itemize}
\end{prop}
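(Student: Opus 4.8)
The plan is to recognize both sides of the asserted equivalence as incarnations of $GQ(1)$-torsors and then to descend along the central extension (\ref{GQ1-central-ext-seq}). First I would identify the groupoid of $\Pi$-symmetric $1|1$-bundles on $X$ with the groupoid of $GQ(1)$-torsors on $X$. A $\Pi$-symmetric $1|1$-bundle is a rank $1|1$ bundle $E$ together with an odd automorphism $p$ satisfying $p^2=-\id$, and any such pair is locally isomorphic to a fixed standard model $(\OO_X^{1|1},p_0)$. A direct matrix computation identifies the centralizer of $p_0$ in $GL(1|1)$ with the image of the embedding $GQ(1)\hra GL(1|1)$ recalled above; hence the transition functions of $(E,p)$ reduce the structure group to $GQ(1)$, and the frame torsor $P=\Isom((\OO_X^{1|1},p_0),(E,p))$ is a $GQ(1)$-torsor. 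Conversely $P\mapsto P\times_{GQ(1)}\C^{1|1}$ recovers $(E,p)$, and this sets up an equivalence of groupoids compatible with isomorphisms.

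Next I would build the forward functor by descent along (\ref{GQ1-central-ext-seq}). Given a $GQ(1)$-torsor $P\to X$, set $\wt X:=P/\G_m$. Since $\G_m$ is central with quotient $GQ(1)/\G_m=\A^{0|1}$, the residual action of $\A^{0|1}$ makes $\wt X\to X$ into an $\A^{0|1}$-torsor, while $P\to \wt X$ is a $\G_m$-torsor and therefore defines an even line bundle $L$ on $\wt X$. The right $GQ(1)$-action on $P$ commutes with the central $\G_m$ and hence descends to an action $\si$ on $L$ covering the $\A^{0|1}$-action on $\wt X$ through the projection $GQ(1)\to \A^{0|1}$; by construction $\G_m\sub GQ(1)$ acts on $L$ through the scaling character, i.e.\ with weight $1$. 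This produces the required datum $(\wt X\to X, L,\si)$.

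For the inverse functor I would run the construction backwards: from $(\wt X\to X, L,\si)$ let $P\to \wt X$ be the $\G_m$-torsor of frames of $L$. The weight-$1$ action $\si$, acting by scaling on the central $\G_m$ and covering the free $\A^{0|1}$-action on $\wt X$, promotes $P$ to a space carrying a $GQ(1)$-action that is simply transitive along the fibers of the composite $P\to \wt X\to X$; so $P$ is a $GQ(1)$-torsor on $X$, and passing to $P\times_{GQ(1)}\C^{1|1}$ returns a $\Pi$-symmetric $1|1$-bundle. Both functors are manifestly functorial in isomorphisms.

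The main point to verify — and where the real (if routine) work lies — is that these two functors are mutually quasi-inverse, i.e.\ that forming $P/\G_m$ and then taking frames of the associated line bundle recovers $P$ with its $GQ(1)$-action, and symmetrically. This is the standard torsor-theoretic statement that for a central extension $1\to \G_m\to GQ(1)\to \A^{0|1}\to 0$ a $GQ(1)$-torsor is the same datum as an $\A^{0|1}$-torsor equipped with a $\G_m$-torsor lifting the structure group. The only features needing care in the super setting are that $\G_m$ is genuinely central in $GQ(1)$ (so that the quotient $\wt X$ and the residual actions are well defined) and that the $\A^{0|1}$-action on $\wt X$ is free — which is precisely what guarantees that $P\to X$ is again a torsor; both are immediate from (\ref{GQ1-central-ext-seq}) and from the freeness built into the notion of an $\A^{0|1}$-torsor.
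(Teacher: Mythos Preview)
Your argument is correct, but it takes a somewhat different route from the paper's proof. The paper proceeds directly without passing through $GQ(1)$-torsors: starting from the data $(\pi:\wt X\to X,L,\si)$ it sets $W:=\pi_*L$, which is a rank $1|1$ bundle since $\pi$ is an $\A^{0|1}$-torsor, and reads off the $\Pi$-symmetry as the action of a fixed odd element of $GQ(1)$; conversely, from a $\Pi$-symmetric $1|1$-bundle $W$ it takes $\wt X=G(1|0;W)$, the projectivization, with $L$ the tautological line bundle. Your frame-torsor picture and the paper's picture are the same construction in two languages: your $P/\G_m$ is exactly $G(1|0;W)$, and your associated bundle $P\times_{GQ(1)}\C^{1|1}$ is $\pi_*L$. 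Your approach has the advantage of making the \emph{reason} for the equivalence transparent---it is simply the standard factorization of torsors under a central extension---while the paper's formulation is shorter to state and stays closer to the concrete objects (pushforward and projectivization) used elsewhere in the paper.
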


\begin{proof} From $(\pi:\wt{X}\to X,L,\si)$ we construct the $1|1$-bundle by setting $W:=\pi_*L$. The $\Pi$-symmetry is given by the action of 
$\left(\begin{matrix} 0 & 1 \\ 1 & 0\end{matrix}\right)\in GQ(1)$.
Conversely, starting from a $\Pi$-symmetric $1|1$-bundle $W$ we define $\pi:\wt{X}\to X$ is the projectivization of $W$, i.e., $\wt{X}=G(1|0;W)$.
\end{proof}



\begin{example}\label{Grassm-action-ex}
As in Example \ref{proj-action-ex} let us consider a superspace $V$ of dimension $n|n$ equipped with a $\Pi$-symmetry $p:V\to \Pi V$.
There is a natural embedding of $GQ(1)$ into $GL(V)$: an $R$-point $a_0+a_1$ of $GQ(1)$ is mapped to $a_0\cdot \id+a_1\cdot p$.
For any $(a,b)$, the corresponding action of $GQ(1)$ on the Grassmannian $G(a|b,V)$ factors through an action of $\A^{0|1}$ on $G(1|1,V)$. 
Explicitly, an $R$-point of the supergrassmannian $G(a|b,V)$ corresponds to $W\sub V_R$ and the action of $\psi\in R^-$ sends $W$ to $(\id+\psi p)(W)$.

In the case $(a,b)=(1,0)$, we recover the action of $\A^{0|1}$ on $\P^{n-1|n}$ considered in Example \ref{proj-action-ex}, such that the quotien is 
the $\Pi$-projective space $\P^{n-1}_\Pi$. 
Note that in this case there is a natural weight-$1$-action of $GQ(1)$ on the tautological line bundle $\OO(-1)$ over $\P^{n-1|n}$: the corresponding $\G_m$-torsor over $\P^{n-1|n}$
is the complement to the origin in $V$ and the action of $GQ(1)$ is induced by the embedding $GQ(1)\sub GL(V)$. The $\Pi$-symmetric $1|1$-bundle on $\P^{n-1}_\Pi=\P^{n-1|n}/\A^{0|1}$
corresponding to this $GQ(1)$-action on $\OO(-1)$ by Proposition \ref{GQ1-torsor-prop} is nothing else but the universal $\Pi$-symmetric $1|1$-bundle on $\P^{n-1}_\Pi$.

For arbitrary $(a,b)$, the embedding of $GQ(1)$ into $GL(V)$ gives a weight-$1$-action of $GQ(1)$ on $\SS$, the universal subbundle over $G(a|b,n|n)$.
The corresponding action of $GQ(1)$ on $\Ber(\SS)$ has weight $a-b$. In particular, for $b=a-1$, we get from Proposition \ref{GQ1-torsor-prop}
a $\Pi$-symmetric $1|1$-bundle over the quotient $G(a|a-1,n|n)/\A^{0|1}$. However, neither this $1|1$-bundle, nor its dual have any global sections. In fact, 
we will prove later that for $n>a>1$ and $n\ge 4$, the quotient $G(a|a-1,n|n)/\A^{0|1}$ is not $1|1$-embeddable (see Cor.\ \ref{Ga,a-1-cor}).
\end{example}


\begin{remark}
The exact sequence \eqref{GQ1-central-ext-seq} leads to a long exact sequence
$$\ldots\to H^1(X,GQ(1))\to H^1(X,\A^{0|1})\rTo{\de} H^2(X,\G_m)\to\ldots$$
The connecting map $\de$ is easy to calculate (see \cite[Sec.5]{MPV}): it is given by the composition
$$H^1(X,\A^{0|1})\rTo{c\mapsto c\cup c} H^2(X,\G_a)\rTo{\exp} H^2(\G_m)$$
(actually one can make sense of the composition in any characteristic).
For example, for $X=\P^n_\Pi$ over an algebraically closed field $k$, 
we have identifications $H^1(X,\A^{0|1})\simeq k$, $H^2(X,\G_m)\simeq k/\Z$, such that the map $\de$ is given by 
$x\mapsto x^2\mod \Z$.
\end{remark}

\subsection{$\A^{0|1}$-fibrations}

It is easy to see that all automorphisms of $\A^{0|1}$ are affine transformations $\th\mapsto a\th+\psi$, where $a$ is an even invertible parameter and $\psi$ is an odd parameter.

\begin{prop}\label{A01-fibrations-prop}
There is an equivalence between the groupoid $\A^{0|1}$-fibrations over $X$ and that of the following data: an even line bundle $L$ on $X$
together with a $\Pi L$-torsor over $X$. The latter groupoid can be also thought of as that of $1|1$-vector bundles $V$ on $X$ equipped with an 
embedding $\OO_X\to V$ such that $V/\OO_X$ is a line bundle of rank $0|1$.
\end{prop}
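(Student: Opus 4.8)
The plan is to produce explicit, mutually quasi-inverse functors, working throughout with \v{C}ech data for a trivializing cover $\{U_i\}$ of $X$. The starting point is the observation just made that $\Aut(\A^{0|1})$ is the group of affine transformations $\th\mapsto a\th+\psi$, so that it is the semidirect product $\A^{0|1}\rtimes\G_m$, with $\G_m$ acting by scaling the odd coordinate. Thus an $\A^{0|1}$-fibration is the same as a cocycle valued in $\Aut(\A^{0|1})$, i.e.\ a collection of affine gluing maps $\th_i=a_{ij}\th_j+\psi_{ij}$ on overlaps, with isomorphisms of fibrations corresponding to the usual coboundary relation.

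For the first equivalence I would use the split exact sequence $1\to\A^{0|1}\to\Aut(\A^{0|1})\to\G_m\to1$. The image cocycle $\{a_{ij}\}$ in $\G_m$ defines an even line bundle $L$. Once $L$ is fixed, the residual translation part $\{\psi_{ij}\}$ satisfies $\psi_{ik}=\psi_{ij}+a_{ij}\psi_{jk}$, which is exactly the cocycle condition for the additive group scheme $\Pi L$ obtained by twisting $\A^{0|1}$ by $L$ through the scaling action; changing the local trivializations of the fibration alters $\{\psi_{ij}\}$ by a $\Pi L$-coboundary, i.e.\ by an isomorphism of $\Pi L$-torsors. This sets up the equivalence between $\A^{0|1}$-fibrations and pairs $(L,T)$ with $T$ a $\Pi L$-torsor, and naturality in $X$ and in morphisms is immediate from the cocycle description.

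For the passage to $1|1$-bundles I would realize $\Aut(\A^{0|1})$ inside $\PGL(1|1)$. The same affine cocycle is the transition law for the slope coordinate $s$ of the projectivization $G(1|0,V)=\A^{0|1}$ of the rank $1|1$ bundle $V$ glued by the Borel matrices $\left(\begin{matrix} a_{ij} & -\psi_{ij}\\ 0 & 1\end{matrix}\right)\in\GL(1|1)$; a short computation using $s^2=0$ shows that the slope transforms precisely by $s_i=a_{ij}s_j+\psi_{ij}$. The odd coordinate line $\langle\eta\rangle$ is preserved by these matrices and has trivial transition, giving a filtration with even quotient $L$; applying $\Pi$ yields the exact sequence $0\to\OO_X\to\Pi V\to\Pi L\to0$, which is the datum of the third groupoid, with extension class equal to the torsor class $\{\psi_{ij}\}$. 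Conversely, from such a $V$ one recovers the fibration as the projectivization of the distinguished line and reads off $(L,T)$ from the associated graded and the extension class, and the two round-trips are naturally isomorphic to the identities by routine bookkeeping with coboundaries.

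The main obstacle is exactly this realization inside $\PGL(1|1)$, because in supergeometry projectivization can destroy the translation data. Indeed, the opposite Borel $\left(\begin{matrix} a_{ij} & 0\\ * & 1\end{matrix}\right)$, which stabilizes the even line $\langle x\rangle$, acts on the slope only through its diagonal entry (the off-diagonal odd entry dies since $s^2=0$), hence maps to the purely linear subgroup $\G_m\subset\Aut(\A^{0|1})$ and forgets the torsor entirely. The essential point is therefore to select the correct Borel, namely the stabilizer of the \emph{odd} coordinate line, whose action on slopes is faithful onto all of $\Aut(\A^{0|1})$, and correspondingly to perform the parity shift that turns the naturally occurring odd-sub/even-quotient filtration into the even-sub/odd-quotient filtration $\OO_X\hookrightarrow V$ of the statement. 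Keeping this twist and parity convention consistent, so that the recovered line bundle is genuinely $L$ rather than $L^{-1}$, is the only delicate part; everything else is formal.
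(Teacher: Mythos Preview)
Your argument for the first equivalence is correct and essentially the same as the paper's: both use the split extension $1\to\A^{0|1}\to\Aut(\A^{0|1})\to\G_m\to1$ and the resulting description of the nonabelian $H^1$; you simply spell out the \v Cech cocycles explicitly.

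For the passage to $1|1$-bundles you take a genuinely different route. You lift the affine cocycle $(\th_i\mapsto a_{ij}\th_j+\psi_{ij})$ to a Borel cocycle in $\GL(1|1)$, realize the fibration as the projectivization $G(1|0,V)$, and then juggle a parity shift to put the filtration into the form $\OO_X\hookrightarrow V$. This works, but as you yourself note it forces you to pick the ``correct'' Borel and to keep track of $L$ versus $L^{-1}$ and $V$ versus $\Pi V$. The paper avoids all of this by taking the $1|1$-bundle to be $V:=\pi_*\OO_{\wt X}$ itself. Then the embedding $\OO_X\hookrightarrow V$ is tautological, the quotient $V/\OO_X$ is the odd line bundle $\Pi L^{-1}$ by inspection, and the $\Pi L$-torsor is literally the sheaf of $\OO_X$-linear splittings of $0\to\OO_X\to\pi_*\OO_{\wt X}\to\Pi L^{-1}\to0$. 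The inverse functor is equally concrete: given such an extension $0\to\OO_X\to V\to M\to0$ with $M$ of rank $0|1$, there is a \emph{unique} commutative $\OO_X$-algebra structure on $V$ with $s^2=0$ for local odd sections $s$, and one sets $\wt X=\operatorname{Spec}_X V$. So where you use $\wt X=\P_X(V')$ and have to work to recover the translation data from the Borel, the paper uses $\wt X=\operatorname{Spec}_X(\pi_*\OO_{\wt X})$ and the algebra structure carries that data automatically. Your approach makes the link to $\PGL(1|1)$ and to the projectivization picture explicit (which is conceptually nice, since $\P^{0|1}=\A^{0|1}$), at the cost of the bookkeeping you flag; the paper's approach is shorter and coordinate-free.
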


\begin{proof}
The bijection of the corresponding isomorphism classes follows from the general yoga of noncommutative cohomology: we have an exact sequence of group schemes
$$0\to \A^{0|1}\to \Aut(\A^{0|1})\to \G_m\to 1,$$
admitting a (non-normal) splitting $\G_m\to \Aut(\A^{0|1})$.
This implies that the corresponding map $H^1(X,\Aut(\A^{0|1}))\to H^1(X,\G_m)$ is surjective and the fiber over
$[c]\in H^1(X,\G_m)$ is identified with $H^1(X,\A^{0|1}_c)$, where $\A^{0|1}_c$ is the twist of $\A^{0|1}_X$ by the $1$-cocycle $c$.
Thus, if $[c]$ corresponds to a line bundle $L$ then $\A^{0|1}_c=\Pi L$.

The functor between the corresponding groupoids is defined as follows.
Given an $\A^{0|1}$-fibration $\pi:\wt{X}\to X$ we define the line bundle $L$ on $X$ by
$$\Pi L^{-1}:=\pi_*\OO_{\wt{X}}/\OO_X.$$
Furthermore, the set of $\OO_X$-linear splittings of the exact sequence
$$0\to \OO_X\to \pi_*\OO_{\wt{X}}\to \Pi L^{-1}\to 0$$
is a $\Pi L$-torsor over $X$.

Conversely, given an extension of supervector bundles
$$0\to \OO_X\to V\to M\to 0\to 0$$
where $M$ has rank $0|1$, there is a unique structure of $\OO_X$-algebra on $V$ defined uniquely by the conditions that the $\OO_X$-module structure on $M$
is induced by the mutliplication in $V$ and that for any local odd section $s$ of $V$ one has
$s^2=0$. Indeed, locally we can choose an odd section $\th$ of $V$ projecting to a generator of $M$. Then the natural map $\OO_X[\th]\to V$ is an isomorphism of
algebras. These structures glue into a global structure of $\OO_X$-algebra on $V$.
Hence, we have $V=\pi_*\OO_{\wt{X}}$ for a canonically defined $\A^{0|1}$-fibration $\wt{X}\to X$.
\end{proof}



\section{Quotients by free $\A^{0|n}$-actions}

\subsection{Existence of quotients by a free action}

Everywhere in this subsection we fix a base superscheme $S$ and work with superschemes over $S$.
We use the obvious superanalogs of the basic notions about group actions. Recall that an action of a subgroup scheme $G$ on a subsperscheme $X$ (over $S$)
is called ${\it free}$ if the morphism $\a_{G,X}:G\times_S X\to X\times_S X: (g,x)\mapsto (gx,x)$ is a closed embedding.

\begin{theorem}\label{free-quot-thm} 
Let $\A^{0|n}_S$ act freely on a superscheme $X$ over $S$. Then there exists a categorical quotient $\pi:X\to Y=X/\A^{0|n}$ for this action.
Furthermore, $\pi$ is an $\A^{0|n}$-torsor with respect to Zariski topology. If $X$ is of finite type (resp., smooth) over $S$ then so is $Y$.
\end{theorem}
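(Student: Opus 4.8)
The plan is to bootstrap from the affine case established in \cite{Zubkov} (and, for $X$ smooth, \cite{MO}) by an essentially formal gluing argument, the point being that an $\A^{0|n}$-action is invisible on underlying topological spaces. Indeed, the action morphism $\sigma:\A^{0|n}_S\times_S X\to X$ and the projection induce the same map of topological spaces, since $\A^{0|n}$ is a single point topologically; hence $\sigma$ restricts, over any open $U\sub X$, to an action on $U$. That is, \emph{every} open subscheme is invariant. Moreover freeness restricts to opens: the defining morphism $\a:\A^{0|n}_S\times_S X\to X\times_S X$ pulls back over the open immersion $U\times_S U\to X\times_S X$ to the analogous morphism for $U$, and a closed immersion is stable under base change. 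Thus the action restricts to a free action on each member of an affine open cover $\{U_i=\Spec A_i\}$ of $X$.

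On each chart the affine case produces a categorical quotient $q_i:U_i\to V_i$ that is an $\A^{0|n}$-torsor, with $V_i=\Spec B_i$, where $B_i=\bigcap_{j=1}^n\ker(v_j)\sub A_i$ is the subalgebra of invariants of the $n$ anticommuting odd homological vector fields $v_1,\dots,v_n$ defining the action. The single fact that makes these local quotients sheafify is that forming invariants commutes with localization at an \emph{even invariant} function: for even $f\in B_i$ one has $v_j(a/f^k)=v_j(a)/f^k$, so $\bigcap_j\ker\bigl(v_j:(A_i)_f\to (A_i)_f\bigr)=(B_i)_f$. Thus the distinguished opens $D(f)$ with $f$ even and invariant form a basis of invariant opens, each with quotient $\Spec((B_i)_f)$, and these are exactly the preimages under $q_i$ of the corresponding distinguished opens of $V_i$.

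Now I glue. For each pair $i,j$ the open $U_i\cap U_j$ is invariant, and its quotient — computed inside either chart via the localization identity above — is canonically an open subscheme of both $V_i$ and $V_j$; the identification $V_i\supseteq V_{ij}\xrightarrow{\ \sim\ }V_{ji}\sub V_j$ is unique because it is a map of categorical quotients. Uniqueness forces the cocycle condition on triple overlaps, so the $V_i$ glue to a superscheme $Y$ and the $q_i$ glue to a morphism $\pi:X\to Y$. Being an $\A^{0|n}$-torsor is local on the base, so $\pi$ is one. For the universal property, given an invariant morphism $f:X\to Z$ I restrict to each $U_i$, factor through the categorical quotient $V_i$, and observe that the resulting maps $\bar f_i:V_i\to Z$ agree on overlaps by uniqueness of the factorization through the quotient of $U_i\cap U_j$; hence they glue to a unique $\bar f:Y\to Z$, showing that $Y=X/\A^{0|n}$ is a categorical quotient.

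Finally, if $X$ is of finite type over $S$ then each $V_i$ is of finite type by the affine statement, and $|Y|=|X|$ is quasi-compact, so finitely many $V_i$ cover $Y$ and $Y$ is of finite type; smoothness of $V_i\to S$ in the smooth case is supplied by \cite{MO}, and since smoothness is local on the source, $Y\to S$ is smooth. The honest content of the theorem sits entirely in the affine case of \cite{Zubkov}; within the gluing the only step demanding attention is the compatibility of the invariant-subalgebra construction with restriction to invariant opens, i.e. the localization identity of the second paragraph, which is exactly what guarantees that the local quotients are restrictions of a single global object. I expect no further obstacle.
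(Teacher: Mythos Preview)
Your proof is correct but the route differs from the paper's in two ways. First, the paper is self-contained: for $n=1$ it proves the affine statement from scratch via the short Lemma~\ref{affine-odd-derivation-lem} (if $v(\th)=1$ then $A\simeq A^v[\th]$, and $A^v$ is finitely generated if $A$ is) rather than invoking \cite{Zubkov}. Second, for $n>1$ the paper does not glue directly for general $n$ but instead inducts: it forms $X'=X/\A^{0|n-1}$, shows via a normal-subgroup lemma (Lemma~\ref{subgroup-quot-lem}) that the residual $\A^{0|1}=\A^{0|n}/\A^{0|n-1}$-action on $X'$ is free and that $X\to X'/\A^{0|1}$ is again a Zariski torsor, and identifies $X/\A^{0|n}$ with $X'/\A^{0|1}$. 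Your approach is shorter once the affine case is outsourced and avoids the subgroup machinery; the paper's is elementary throughout and isolates the genuinely easy $n=1$ case. There is also a small technical divergence in the gluing step (already present for $n=1$): you localize only at invariant even $f$ and then need that such $D(f)$ form a basis---this follows from $|q_i|:|U_i|\to|V_i|$ being a homeomorphism, a consequence of $\A^{0|n}$ being a point that you mention earlier but should invoke explicitly here---whereas the paper shows $(A[f^{-1}])^v$ is a localization of $A^v$ for \emph{every} even $f$, by the trick of replacing $f$ with the invariant element $\wt f=v(f\th)$, which differs from $f$ by the nilpotent $v(f)\th$ and hence defines the same open.
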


The proof is based on the following result.

\begin{lemma}\label{affine-odd-derivation-lem} 
Let $v$ be an odd derivation of a superring $A$ such that $v^2=0$ and there exists an odd element $\th$ such that $v(\th)=1$. Set $A^v=\ker(v:A\to A)$.
Then the natural ring homomorphism
$$A^v[\th]\to A$$
is an isomorphism (where on the left we view $\th$ as an independent variable).
If $A$ is finitely generated as an algebra over a sub-superring $C\sub A^v$ then so is $A^v$.
\end{lemma}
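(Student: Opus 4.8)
The plan is to prove the lemma in two stages: first establish the isomorphism $A^v[\th]\cong A$, and then deduce the finite generation statement from it. For the isomorphism, I would exploit the hypothesis $v(\th)=1$ to build a projection of $A$ onto $A^v$ and an explicit ``Taylor expansion'' in $\th$.

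\medskip

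First I would observe that since $v$ is an odd derivation with $v^2=0$ and $v(\th)=1$, the element $\th$ generates a free rank-$1|1$ situation locally. The key trick is the operator $\pi:=\id-\th v:A\to A$. I would check that $\pi$ lands in $A^v$: for any $a\in A$, compute $v(\pi(a))=v(a)-v(\th v(a))=v(a)-v(\th)v(a)+\th v^2(a)=v(a)-v(a)=0$, using $v^2=0$ and the Leibniz rule (with the appropriate sign, since $\th$ is odd). Here the sign bookkeeping in $v(\th\cdot v(a))=v(\th)v(a)-\th v(v(a))$ is exactly where I must be careful, but it works out because $v(\th)=1$. This shows $\pi(a)\in A^v$, so every $a\in A$ can be written as
$$a=\pi(a)+\th v(a)=\pi(a)+\th\, v(a),$$
with $\pi(a)\in A^v$ and $v(a)\in A$. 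Iterating, or rather noting that $v(a)$ is again in $A$ and applying $\pi$ to it, I get $v(a)=\pi(v(a))+\th v^2(a)=\pi(v(a))$, so in fact $v(a)\in A^v$ already. Hence every $a\in A$ has the \emph{unique} expression $a=a_0+\th a_1$ with $a_0=\pi(a)\in A^v$ and $a_1=v(a)\in A^v$. This is precisely the statement that the natural map $A^v[\th]\to A$ is bijective: surjectivity is the displayed decomposition, and injectivity follows because if $a_0+\th a_1=0$ with $a_i\in A^v$ then applying $v$ gives $a_1=0$ (since $v(a_0)=0$ and $v(\th a_1)=a_1-\th v(a_1)=a_1$), and then $a_0=0$.

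\medskip

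For the finite generation claim, suppose $A$ is finitely generated as an algebra over a sub-superring $C\sub A^v$, say by elements $g_1,\ldots,g_r$. Using the decomposition above, write each $g_j=g_j^0+\th g_j^1$ with $g_j^0,g_j^1\in A^v$. I claim $A^v$ is generated over $C$ by the finite set $\{g_j^0, g_j^1\}_{j=1}^r$. Indeed, any element of $A^v\subset A$ is a $C$-polynomial in the $g_j$; expanding that polynomial and collecting terms via $g_j=g_j^0+\th g_j^1$, and then projecting to the $\th$-constant part using $\pi$ (or equivalently setting the $\th$-linear part to zero, which is forced since the element lies in $A^v$), expresses it as a $C$-polynomial in the $g_j^0$ and $g_j^1$. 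The only subtlety is that products involving $\th$ may collapse (since $\th^2=0$ as $\th$ is odd, and more generally the $\th$-expansion of $A^v[\th]$ truncates), so I must verify that the projection of a monomial in the $g_j$ onto $A^v$ is itself a polynomial expression in the $g_j^0,g_j^1$; this follows by induction on degree using that $\pi$ is $C$-linear and that $A^v$ is a subring, together with the multiplicativity relation $\pi(ab)=\pi(a)\pi(b)-(-1)^{|a|}\pi(a)\th v(b)\cdots$ — more cleanly, I would just substitute $g_j=g_j^0+\th g_j^1$ into the defining polynomial, reduce modulo $\th^2=0$ and the even/odd parities, and read off the $A^v$-component, which is manifestly a $C$-polynomial in the $g_j^0,g_j^1$.

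\medskip

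The main obstacle I anticipate is the second half: ensuring the finite-generation argument is genuinely rigorous rather than a plausible hand-wave. Specifically, while the decomposition $a=a_0+\th a_1$ is clean, extracting the $A^v$-component of an arbitrary product and proving it is a \emph{polynomial} (not merely an element) in the chosen generators requires keeping track of how $\th$ interacts with odd generators and how terms combine. The cleanest route is probably to note that $A=A^v[\th]$ makes $A$ a free $A^v$-module of rank $1|1$ with basis $\{1,\th\}$, so the subalgebra $C\langle g_1,\ldots,g_r\rangle=A$ has its ``degree-zero in $\th$'' piece equal to $A^v$, and then argue that the $\th$-constant parts of products of generators are $C$-polynomials in the $g_j^0,g_j^1$ by a direct induction. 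I expect the sign conventions in the odd setting to be the most error-prone aspect, but no serious conceptual difficulty beyond that.
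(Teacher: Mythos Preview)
Your proof is correct and follows essentially the same route as the paper: the projector $\pi=\id-\th v$ is exactly the map $a\mapsto v(\th a)$ the paper uses (up to the order of $\th$ and $a$), and the finite-generation step---decompose each generator as $g_j=g_j^0+\th g_j^1$ and take $\{g_j^0,g_j^1\}$---is identical. Your concern about rigor in the second half is unwarranted: once $A=A^v[\th]$ is a free $A^v$-module on $\{1,\th\}$, the chain $A=C[g_j]\subset C[g_j^0,g_j^1][\th]\subset A^v[\th]=A$ forces equality, and reading off the $\th$-constant part gives $A^v=C[g_j^0,g_j^1]$ directly.
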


\begin{proof}
We have to prove that every element of $A$ can be written uniquely in the form $a+b\th$ with $a,b\in A^v$.
The fact that $a$ and $b$ can be recovered from $a+b\th$ follows immediately from the identity
$$v(a+b\th)=\pm b.$$
On the other hand, we have
$$a=v(a\th)\pm v(a)\th,$$
where $v(a\th)$ and $v(a)$ are in $A^v$.

For the last assertion, suppose $x_i$ is a finite set of generators of $A$ over $C$. Write each $x_i$ in the form $a_i+b_i\th$ with $a_i,b_i\in A^v$.
Then it is easy to see that $(a_i)$ and $(b_i)$ generate $A^v$ over $C$.
\end{proof}

We also need the following general statement.

\begin{lemma}\label{subgroup-quot-lem}
Let $G$ be a supergroup scheme acting freely on a superscheme $X$, and let $H\sub G$ be a normal subgroup, flat over $S$, such that the quotient $G/H$ exists as a supergroup scheme and the sequence
$$1\to H(T)\to G(T)\to (G/H)(T)\to 1$$
is exact for any $S$-superscheme $T$.
Assume that the categorical quotient $X'=X/H$ exists as a superscheme and the map $X\to X'$ is an $H$-torsor (in fppf topology).
Then

\noindent
(i) $G'=G/H$ acts freely on $X'$.

\noindent
(ii) If the categorical quotient $X'/G'$ exists then the map $X\to X'\to X'/G'$ gives a categorical quotient $X/G$.

\noindent
(iii) Under the assumption of (ii), if in addition the map $X\to X'$ is a Zariski $H$-torsor and the map $X'\to X'/G'$ is a Zariski $G'$-torsor,
then the map $X\to X'/G'=X/G$ is a Zariski $G$-torsor.
\end{lemma}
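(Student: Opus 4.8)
The statement is Lemma \ref{subgroup-quot-lem}, which reduces a free $G$-action to the two-step quotient $X \to X' = X/H \to X'/G'$. I will prove the three parts in order, since each builds on the previous. The guiding principle throughout is that all the constructions are \emph{local for the relevant topology}, so I can check exactness, freeness, and the torsor/quotient properties fppf- or Zariski-locally where everything trivializes.

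\emph{Part (i): $G' = G/H$ acts freely on $X'$.} First I need to define the action: since $X \to X'$ is an $H$-torsor and $G$ normalizes $H$, the $G$-action on $X$ descends to a $G'$-action on $X'$. To see this, I would use the exactness of $1 \to H(T) \to G(T) \to (G/H)(T) \to 1$ to produce the descent data, checking that the $H$-torsor structure is $G$-equivariant so that the quotient inherits a $G'$-action. For freeness I must show $\a_{G',X'}: G' \times_S X' \to X' \times_S X'$ is a closed immersion. The plan is to relate $\a_{G',X'}$ to $\a_{G,X}$, which is a closed immersion by hypothesis. Concretely, I would form the fiber product square relating $G \times_S X \to X \times_S X$ to $G' \times_S X' \to X' \times_S X'$ via the $H$-torsor maps, and argue that being a closed immersion descends along the faithfully flat (fppf) covers. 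The key point is that freeness of $G'$ on $X'$ is equivalent to the stabilizer being trivial, and the stabilizer of a point of $X'$ under $G'$ is the image of the stabilizer of a lift in $X$ under $G$, which is trivial; I would make this precise using the scheme-theoretic statement via the closed-immersion criterion rather than pointwise.

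\emph{Parts (ii) and (iii): the universal property and the Zariski torsor property.} For (ii), I would verify the universal property of the categorical quotient directly. Given any $G$-invariant morphism $f: X \to Z$, I must factor it through $X'/G'$. Since $f$ is in particular $H$-invariant, the universal property of $X' = X/H$ yields a unique $f': X' \to Z$; then I would check $f'$ is $G'$-invariant, which follows because $f$ is $G$-invariant and the $G'$-action on $X'$ was defined to be compatible with the $G$-action via the torsor map. Applying the universal property of $X'/G'$ then factors $f'$ uniquely, and uniqueness at each stage assembles into uniqueness for the composite, giving $X/G = X'/G'$. For (iii), the assertion that $X \to X'/G'$ is a Zariski $G$-torsor follows from transitivity of torsors under a composite: Zariski-locally on $X'/G'$ the map $X' \to X'/G'$ is trivial (a product with $G'$), and pulling back the $H$-torsor $X \to X'$ over such a trivializing open, together with the splitting of the group extension coming from the exact sequence, exhibits $X \to X'/G'$ Zariski-locally as a product with $G$. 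I would spell out the trivialization by choosing a Zariski section of $X' \to X'/G'$, pulling the $H$-torsor back along it, further trivializing in the Zariski topology, and combining with the group-level data.

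\emph{Main obstacle.} The routine bookkeeping is in (ii), but the genuinely delicate step is the descent of \emph{freeness} in (i): I must pass the closed-immersion property of $\a_{G,X}$ through the two faithfully flat torsor quotients to deduce it for $\a_{G',X'}$. The care is needed because being a closed immersion is not a priori local on the \emph{source}, only on the target; so I must set up the descent so that $\a_{G',X'}$ is pulled back from (a base change of) $\a_{G,X}$ along an fppf cover of the target $X' \times_S X'$, and then invoke fppf descent for closed immersions. Getting the fiber-product diagram exactly right—so that the hypothesis on exactness of $1 \to H(T) \to G(T) \to (G/H)(T) \to 1$ is used to identify the relevant fibers—is where the real work lies.
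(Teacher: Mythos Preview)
Your plan for (i) and (ii) is exactly the paper's: for (i) the paper shows the square with $\a_{G,X}$ on top and $\a_{G',X'}$ on the bottom is cartesian (breaking it into smaller squares, the key one being \eqref{G-G'-X-X'-diagram}, checked after the faithfully flat base change $G\times_S X \to G'\times_S X$) and then implicitly invokes fppf descent for closed immersions---precisely the obstacle you flag. For (ii) the paper says only ``a straightforward consequence of the definitions,'' which is your two-step universal-property argument.

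For (iii) there is a small divergence and one imprecision. Both you and the paper obtain Zariski-local sections of $X\to X'/G'$ by composing sections of the two intermediate maps. The paper then proves the \emph{global} torsor identity $G\times_S X \simeq X\times_{X'/G'} X$ by rewriting the right-hand side as $X \times_{X'}(G'\times_S X)$ and invoking the cartesian square \eqref{G-G'-X-X'-diagram} already established in part (i); this recycles the work from (i) cleanly. Your plan instead trivializes locally using ``the splitting of the group extension coming from the exact sequence.'' Be careful with this phrase: the extension $1\to H\to G\to G'\to 1$ need \emph{not} split as groups. What the exactness of $T$-points actually gives (take $T=G'$ and lift the identity) is a scheme-theoretic section $G'\to G$, equivalently that $G\to G'$ is itself a trivial $H$-torsor. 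With that in hand your local map $G\times_S V \to X_V$, $(g,v)\mapsto g\cdot s(v)$, is a morphism of $H$-torsors over $X'_V\simeq G'\times_S V$ and hence an isomorphism, so your route does go through once ``splitting'' is read correctly. The paper's reuse of \eqref{G-G'-X-X'-diagram} is tidier and avoids this pitfall.
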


\begin{proof}
(i) It is enough to prove that the diagram
\begin{diagram}
G\times_S X &\rTo{\a_{G,X}}& X\times_S X\\
\dTo{}&&\dTo{}\\
G'\times_S X' &\rTo{\a_{G',X'}}& X'\times_S X'
\end{diagram}
is cartesian, where the vertical maps are the natural projections.
It is clear that the diagram
\begin{diagram}
G'\times_S X &\rTo{(\a',p_X)}& X'\times_S X\\
\dTo{}&&\dTo{}\\
G'\times_S X' &\rTo{\a_{G',X'}}& X'\times_S X'
\end{diagram}
is cartesian, where $\a':G'\times_S X\to X'$ is induced by $(g,x)\mapsto gx$. Hence, it is enough to show that the diagram
\begin{equation}\label{G-G'-X-X'-diagram}
\begin{diagram}
G\times_S X &\rTo{(g,x)\mapsto gx}& X\\\
\dTo{}&&\dTo{}\\
G'\times_S X &\rTo{\a'}& X'
\end{diagram}
\end{equation}
is cartesian. It is enough to show this after the faithfully flat base change $G\times_S X\to G'\times_S X$. Since the natural diagram
\begin{diagram}
H\times_S G &\rTo{(h,g)\mapsto hg}& G\\
\dTo{p_G}&&\dTo{}\\
G&\rTo{}& G'
\end{diagram}
is cartesian, it remains to check that the diagram
\begin{diagram}
H\times_S G\times_S X &\rTo{(h,g,x)\mapsto hgx}& X\\
\dTo{p_{G\times X}}&&\dTo{}\\
G\times_S X &\rTo{}& X'
\end{diagram}
is cartesian, where the bottom arrow is the composition of $(g,x)\mapsto gx$ with the projection $X\to X'$.
But we can present it as the composition of two cartesian squares
\begin{diagram}
H\times_S G\times_S X &\rTo{(h,g,x)\mapsto (h,gx)}& H\times X &\rTo{(h,x)\mapsto hx}& X \\
\dTo{p_{G\times X}}&&\dTo{p_X} && \dTo{}\\
G\times_S X&\rTo{}& X &\rTo{}& X'
\end{diagram}
where the right square is cartesian due to the fact that $X\to X'$ is an $H$-torsor.

\noindent
(ii) This is a straightforward consequence of the definitions.

\noindent
(iii) Combining Zariski local sections of the projections $X\to X'$ and $X'\to X'/G'$, we obtain that $X\to X'/G'$ admits Zariski local sections.
Hence, it remains to show that the diagram
\begin{diagram}
G\times_S X &\rTo{(g,x)\mapsto gx}& X\\\
\dTo{p_X}&&\dTo{}\\
X &\rTo{}& X'/G'
\end{diagram}
is cartesian. We have
$$X\times_{X'/G'} X\simeq (X\times_{X'/G'} X')\times_{X'} X.$$
Since $X'\to X'/G'$ is a $G'$-torsor, we have a natural isomorphism $G'\times _SX'\rTo{\sim} X'\times_{X'/G'} X'$, which induces an isomorphism
$$G'\times_S X\rTo{\sim} X\times_{X'} (X'\times_{X'/G'} X')\simeq X\times_{X'/G'} X'.$$
Now the assertion follows from the cartesian square \eqref{G-G'-X-X'-diagram}.
\end{proof}

\begin{proof}[Proof of Theorem \ref{free-quot-thm}]
We use induction on $n$.

\noindent
{\bf Case $n=1$}.
Let $v$ be the homological vector field on $X$ corresponding to this action. Let $\OO^v_X\sub \OO_X$ denote the subsheaf of $v$-horizontal functions. 
By Proposition \ref{free-action-prop}, locally there exists an odd function $\th$ on $X$ such that $v(\th)=1$. Suppose
$\Spec(A)$ is an affine open in $X$ with such a function $\th$. We claim that for any even $f\in A$, the ring $(A[f^{-1}])^v$ is naturally a localization of $A^v$.
Indeed, this is clear if $v(f)=0$. Now we observe that the localization by $f$ does not change if we replace $f$ by
$$\wt{f}:=v(f\th)=v(f)\th+f,$$
where $v(\wt{f})=0$. This easily implies that $Y:=(|X|,\OO^v_X)$ is a superscheme. The fact that $X\to Y$ is a categorical quotient immediately reduces to the affine case,
and in the affine case it is clear that the categorical quotient boils down to passing from a ring $A$ to the subring $A^v$.

By Lemma \ref{affine-odd-derivation-lem}, we also see that the map $X\to Y$ is an $\A^{0|1}$-torsor.

\noindent
{\bf Case $n>1$.} Consider a subgroup $\A^{0|n-1}\sub \A^{0|n}$. By the induction assumption, there exists a quotient $X'=X/\A^{0|n-1}$. Furthermore, by Lemma \ref{subgroup-quot-lem},
$\A^{0|1}= \A^{0|n}/\A^{0|n-1}$ acts freely on $X'$, and the quotient $X'/\A^{0|1}$, which we know exists, is also the quotient $X/\A^{0|n}$. The fact that $X\to X/\A^{0|n}$ is an $\A^{0|n}$-torsor
also follows from Lemma \ref{subgroup-quot-lem}.
\end{proof}

\begin{remark}
It is easy to see that the condition that $\A^{0|1}$-action is free is necessary in Theorem \ref{free-quot-thm}.
For example, for the homological vector field $v=\th \partial_z$ on $\A^{1|1}_k$ with coordinatees $(z,\th)$, the subring of $v$-invariants is $k+\th\cdot k[z]\sub k[z,\th]$, which is not
finitely generated over $k$.
\end{remark}

\subsection{The action on supergrassmannians}

Recall that in Example \ref{Grassm-action-ex} we defined an action of $\A^{0|1}$ on supergrassmannians $G(a|b,n|n)$
associated with a $\Pi$-symmetry on a supervector space $V$ of superdimension $n|n$.

\begin{lemma} The above action of $\A^{0|1}$ on $G(a|b,n|n)$ is free if and only if $a\neq b$.
\end{lemma}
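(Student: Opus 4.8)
The plan is to analyze freeness of the $\A^{0|1}$-action on $G(a|b,n|n)$ by working locally on the supergrassmannian and computing the associated homological vector field $v$, then applying the criterion from Proposition \ref{free-action-prop}. Recall from Example \ref{Grassm-action-ex} that the action sends an $R$-point $W\sub V_R$ to $(\id+\psi p)(W)$, where $p$ is the $\Pi$-symmetry. I would first write down the action explicitly in the standard affine charts of $G(a|b,n|n)$. In such a chart, a point is a subspace presented as the graph of a linear map, i.e.\ described by a matrix of coordinate functions (an $a|b$-dimensional subspace written relative to a complementary $a|b$-dimensional coordinate subspace). The infinitesimal action of $p$ then becomes a concrete first-order differential operator $v$ on the chart, and the task reduces to checking whether $v$ satisfies one of the equivalent freeness conditions (2)--(5) of Proposition \ref{free-action-prop}.

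The key computation is to see how $\id+\psi p$ deforms the graph coordinates. Choosing a $\Pi$-symmetric decomposition $V = V_0 \oplus p(V_0)$ adapted to the chart, the endomorphism $p$ exchanges an even coordinate direction with an odd one (up to sign), so its action on the Grassmannian moves a subspace transversally in a way governed by the off-diagonal blocks of the presenting matrix. I would organize the presenting data into blocks according to the $a$ even-versus-$b$ odd splitting both of the subspace $W$ and of a fixed complementary subspace. The linearization $v$ of the action will then be an odd vector field whose action on the matrix entries involves the $\Pi$-symmetry intertwining the even and odd rows/columns. The crucial point is to identify whether there exists an odd coordinate function $\th$ in the chart with $v(\th)$ invertible, equivalently a pair of entries $a,b$ of opposite parity with $\sum a_i v(b_i)=1$ as in condition (2).

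The main obstacle, and the heart of the dichotomy $a \neq b$ versus $a = b$, is the following: the presenting matrix of $W$ has a block of odd entries and a block of even entries, and $v$ essentially differentiates one against the other via $p$. When $a \neq b$ the two relevant blocks have unequal sizes, so after passing to the appropriate chart one can always locate an odd entry $\th$ on which $v$ acts by an invertible (in fact unit) function, giving freeness by condition (3) or (4). When $a = b$, by contrast, the $\Pi$-symmetry interacts with the diagonal shape so that the action has fixed points — concretely, $\Pi$-symmetric subspaces $W$ with $p(W)=W$ are fixed by $\id+\psi p$ — and at such points no odd function $\th$ can have $v(\th)$ invertible, so freeness fails. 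I expect the delicate bookkeeping to lie in choosing the charts and the block decomposition cleanly enough that the invertibility of $v(\th)$ (when $a\neq b$) becomes transparent, and in exhibiting an honest fixed point (when $a=b$) to rule out freeness; the natural candidate fixed locus is the $\Pi$-grassmannian $G\Pi$ sitting inside $G(a|a,n|n)$.

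Once the local picture is established, I would conclude by invoking Proposition \ref{free-action-prop}: in the case $a\neq b$ the existence of a local odd $\th$ with $v(\th)$ invertible is exactly condition (3), so the action is free everywhere; in the case $a = b$ the presence of fixed points obstructs the local surjectivity of $v:\Om_{X/S}\to\Pi\OO_X$, so the equivalent freeness conditions fail. This matches the intended statement that the action is free precisely when $a\neq b$, consistent with the earlier observation in Example \ref{Grassm-action-ex} that the case $b=a-1$ produces a genuine $\A^{0|1}$-torsor over the quotient $G(a|a-1,n|n)/\A^{0|1}$.
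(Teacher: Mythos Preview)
Your overall strategy matches the paper's: work in the standard affine charts of $G(a|b,V)$ where a subspace is the graph of a map $f:\Lambda_1\to\Lambda_2$, compute the homological vector field explicitly, and appeal to Proposition~\ref{free-action-prop}. The $a=b$ direction is exactly as you say: the $\Pi$-symmetric subspaces form a nonempty fixed locus, so freeness fails.

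There is, however, a gap in your sketch of the $a\neq b$ direction. You propose to ``locate an odd entry $\th$ on which $v$ acts by an invertible (in fact unit) function,'' but this is too optimistic: on a fixed affine chart no single matrix coordinate need have $v(\th)$ a global unit. What the paper actually does is show that $v$ is \emph{nowhere vanishing} on the chart. The computation gives
\[
f \ \longmapsto \ f + \psi\bigl(p_{21}+p_{22}f-fp_{11}-fp_{12}f\bigr),
\]
so one must rule out any $f$ with $p_{21}+p_{22}f-fp_{11}-fp_{12}f=0$. The key trick, which your outline does not isolate, is to compose this relation with $p_{12}$ and use $p^2=-\id$ to obtain
\[
(p_{12}f)(p_{11}+p_{12}f)=-\id_{\Lambda_1}.
\]
This would force $p_{12}f:\Lambda_1\to\Pi\Lambda_1$ to be an isomorphism, impossible when $\dim\Lambda_1=a|b$ with $a\neq b$. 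Your intuition that ``unequal block sizes'' is the obstruction is correct, but it enters through this rank argument rather than by exhibiting a single unit coefficient. Once nonvanishing is established, smoothness of the Grassmannian lets you invoke the Remark after Proposition~\ref{free-action-prop} (or equivalently pass to condition~(3) after shrinking).
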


\begin{proof}
In the case $a=b$ we have a nonempty subscheme consisting of $W$ such that $p(W)=W$. The action of $\A^{0|1}$ on this subscheme is trivial.

To prove freeness in the case $a\neq b$ we have to prove that the corresponding homological vector field is non-vanishing everywhere.
Consider the covering of $G(a|b,n|n)=G(a|b,V)$ by open affine cells associated with decompositions $V=\La_1\oplus \La_2$ into coordinate subspaces,
where $\La_1$ has dimension $a|b$, so that subspaces in the corresponding cell are the graphs of maps $\La_1\to \La_2$.

Let $p_{ij}:\La_i\to \Pi\La_j$ be the components of the $\Pi$-symmetry $p:V\to \Pi V$. The action of $\A^{0|1}$ is given by $W\mapsto (\id+\psi p)(W)$, so
if $W=(\id+f)(\La_1)$ for some $f:\La_1\to \La_2\ot R$ (where $R$ is a test superring), then it maps to 
$$(\id_V+\psi p)(\id_{\La_1}+f)(\La_1)=(\id_{\La_1}+f')(\La_1) \ \text{ with}$$
$$f'=[f+\psi(p_{21}+p_{22}f)]\cdot [1+\psi(p_{11}+p_{12}f)]^{-1}=f+\psi[p_{21}+p_{22}f-fp_{11}-fp_{12}f]=0.$$
It remains to prove that there does not exist $f$ such that
$$p_{21}+p_{22}f-fp_{11}-fp_{12}f=0.$$
Indeed, composing with $p_{12}$ we get
$$-\id_{\La_1}-p_{12}fp_{11}-p_{12}fp_{12}f=0,$$
$$(p_{12}f)(p_{11}+p_{12}f)=-\id_{\La_1}.$$
But $p_{12}f$, viewed as a map from $\Pi\La_1$ to $\La_1$, cannot be surjective since $a\neq b$, so we get a contradiction.
\end{proof}



\section{$\Pi$-projectivity}

\subsection{Criterion for $\Pi$-projectivity}

\begin{lemma} For every $n$, there exists a closed embedding $\P^{n-1|n}\hra \Pi^{2n-1}_\Pi$. Hence, every projective superscheme is $\Pi$-projective.
\end{lemma}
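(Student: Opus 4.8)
The plan is to realize the embedding as the composite of a linear embedding with the $\A^{0|1}$-torsor of Example \ref{proj-action-ex}. Let $U$ be the super vector space $\C^{n|n}$, so that $\P^{n-1|n}=G(1|0,U)$, and set $V:=U\oplus \Pi U$, a space of dimension $2n|2n$. The canonical identification $U\cong \Pi U$ induces a $\Pi$-symmetry $p$ on $V$ (interchanging the two summands with a sign, so that $p^2=-\id$), which lets us identify $\P^{2n-1}_\Pi$ with $G\Pi(1|1,V)$. I would then define $\phi\colon \P^{n-1|n}\to \P^{2n-1}_\Pi$ as the composite of the linear closed embedding $\iota\colon G(1|0,U)\hra G(1|0,V)=\P^{2n-1|2n}$ induced by $U\hra V$ with the $\A^{0|1}$-torsor projection $\pi\colon \P^{2n-1|2n}\to \P^{2n-1}_\Pi$ of Example \ref{proj-action-ex}. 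Concretely, $\phi$ sends a rank $1|0$ subbundle $L\sub U$ to the subbundle $W:=L+p(L)=L\oplus p(L)\sub V$, which is $\Pi$-symmetric since $p(W)=p(L)+p^2(L)=p(L)-L=W$; thus $\phi$ indeed lands in $G\Pi(1|1,V)$.

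The key point is that $\phi$ is a monomorphism, and this is where the splitting $V=U\oplus\Pi U$ pays off. Over any test superring $R$, a point of $G\Pi(1|1,V)(R)$ is the subbundle $W\sub V\ot R$ itself, and for $W=\phi(L)=L\oplus p(L)$ the two summands lie in the complementary $R$-submodules $U\ot R$ and $\Pi U\ot R$. Hence the even $R$-linear projection $\rho\colon V\ot R\to U\ot R$ recovers $L=\rho(W)$. Therefore $W$ determines $L$, so $\phi$ is injective on $R$-points for every $R$, i.e.\ a monomorphism. (Note that this argument uses only the $U$-versus-$\Pi U$ grading, which is $R$-linear and insensitive to the internal parity of $R$.)

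It remains to upgrade the monomorphism to a closed embedding. I would argue that $\phi$ is finite: the torsor projection $\pi$ is finite, since an $\A^{0|1}$-torsor is Zariski-locally $\A^{0|1}\times(-)$ and the algebra of functions on $\A^{0|1}$ is $k[\th]$, free of rank $1|1$; and $\iota$ is a closed embedding, so the composite $\phi$ is finite. Now a finite monomorphism is a closed embedding: writing $\phi$ affine-locally on the target as $\Spec B\to \Spec A$, the monomorphism property says that the multiplication $B\ot_A B\to B$ is an isomorphism, so $A\to B$ is an epimorphism of superrings, and a finite epimorphism is surjective by super-Nakayama. Hence $\phi$ is a closed embedding. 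I expect the main obstacle in a full write-up to be exactly this last step: checking scheme-theoretically, including the odd directions, that the (clearly injective and finite) map is a closed immersion. Packaging it as ``finite monomorphism $\Rightarrow$ closed immersion'' isolates the single general fact that needs care in the super setting.

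Finally, for the second assertion, any projective superscheme $X$ admits a closed embedding $X\hra \P^{M|N}$ into some superprojective space. Choosing $n\ge \max(M+1,N)$ together with a split inclusion $\C^{M+1|N}\hra \C^{n|n}$ of super vector spaces yields a linear closed embedding $\P^{M|N}=G(1|0,\C^{M+1|N})\hra G(1|0,\C^{n|n})=\P^{n-1|n}$; composing with $\phi$ exhibits $X$ as a closed subscheme of $\P^{2n-1}_\Pi$, so $X$ is $\Pi$-projective.
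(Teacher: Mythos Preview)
Your construction is exactly the one in the paper: map $L\subset U$ to the $\Pi$-symmetric subspace $L\oplus p(L)\subset U\oplus\Pi U$, and then deduce the second assertion from a linear inclusion $\P^{M|N}\hra\P^{n-1|n}$. The paper states this in one sentence without justification, so your extra work---the factorization through the torsor $\pi$, the recovery of $L$ as the image of $W$ under the projection to $U$, and the ``finite monomorphism $\Rightarrow$ closed immersion'' step---is added detail rather than a different approach.
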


\begin{proof} Given a supervector space $V$, we can equip the space $V\oplus \Pi V$ with a natural $\Pi$-symmetry.
Then to every family of $1|0$-dimensional subspaces $L\sub V$ we associate naturally a family of $\Pi$-symmetric $1|1$-subspaces $L\oplus \Pi L\sub V\oplus \Pi V$.
\end{proof}

\begin{theorem}\label{Pi-proj-thm}
(i) If $X$ is $\Pi$-projective then there exists an $\A^{0|1}$-torsor over $X$ whose total space is projective.

\noindent
(ii) Assume that the base is $\Spec(k)$ where $k$ is an algebraically closed field.
Assume that $H^0(X,\OO)^+=k$ and there exists an $\A^{0|1}$-torsor over $X$ whose total space $\wt{X}$ is projective and such that the morphism
$H^1(X,\OO_X)^-\to H^1(\wt{X},\OO_{\wt{X}})^-$ is zero. Then $X$ is $\Pi$-projective.
\end{theorem}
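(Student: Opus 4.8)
The plan is to run both directions through the model $\A^{0|1}$-torsor $\P^{n|n+1}\to\P^n_\Pi$ of Example \ref{proj-action-ex}, together with the dictionary relating $\Pi$-symmetric $1|1$-bundles on $X$, weight-$1$ actions of $GQ(1)$ on even line bundles over $\wt{X}$, and $v$-connections of integral curvature, furnished by Propositions \ref{GQ1-torsor-prop} and \ref{Heis-v-conn-prop}. For part (i), given a closed embedding $X\hra\P^n_\Pi$ I would simply pull back the torsor: since $\A^{0|1}$-torsors are stable under base change, $\wt{X}:=\P^{n|n+1}\times_{\P^n_\Pi}X\to X$ is an $\A^{0|1}$-torsor, while $\wt{X}\hra\P^{n|n+1}$ is a closed embedding; as $\P^{n|n+1}=G(1|0,V)$ is projective, so is $\wt{X}$.

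For part (ii) the idea is to manufacture a very ample, weight-$1$ $GQ(1)$-equivariant line bundle on $\wt{X}$. Since $\wt{X}$ is projective, first choose a very ample even line bundle $L$ on $\wt{X}$. The vanishing hypothesis on $H^1(X,\OO_X)^-\to H^1(\wt{X},\OO_{\wt{X}})^-$ is exactly the hypothesis of Lemma \ref{v-conn-lem}(i) applied to $\wt{X}$: on the common space $|\wt{X}|=|X|$ the subsheaf $\ker(v)=\OO_{\wt{X}}^v$ is the structure sheaf $\OO_X$ of the quotient (Theorem \ref{free-quot-thm}), so $H^1(\wt{X},\ker v)^-=H^1(X,\OO_X)^-$. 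Hence $L$ admits a $v$-connection $\nabla$, and its curvature $c(\nabla)$ is an even $v$-horizontal global function, i.e. an element $c_0\in H^0(\wt{X},\ker v)^+=H^0(X,\OO_X)^+=k$.

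The crucial step is to normalize $c_0$. If $c_0=0$ then $\nabla$ is flat, so by Lemma \ref{v-conn-equiv-structure-lem} and descent along the torsor we get $L\cong\pi^*M$ for a line bundle $M$ on $X$; since $\pi_*\OO_{\wt{X}}$ is an extension of $\Pi\OO_X$ by $\OO_X$ whose odd part dies under bosonization, we have $\wt{X}_{\bos}\cong X_{\bos}$, so ampleness of $L$ forces $M$ ample and $X$ is already projective, hence $\Pi$-projective. If $c_0\neq0$, I replace the given action by its rescaling through the automorphism $\theta\mapsto\lambda\theta$ of $\A^{0|1}$; this alters neither the torsor $\pi:\wt{X}\to X$ nor the hypotheses, but sends $v\mapsto\lambda v$ and $c_0\mapsto\lambda^2c_0$. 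As $k$ is algebraically closed I pick $\lambda$ with $\lambda^2c_0=-1$ (matching $\OO(1)$ on $\P^{n|n+1}$), so that $c(\nabla)=-1$. Now Proposition \ref{Heis-v-conn-prop} promotes $\nabla$ to a weight-$(-1)$ action of $GQ(1)$ on $L$. The finite-dimensional space $V:=H^0(\wt{X},L)^*$ thereby becomes a weight-$1$ $GQ(1)$-representation, so the odd generator acts by an odd operator $p$ with $p^2=-\id$, i.e. $(V,p)$ is a $\Pi$-symmetric superspace. Since $L$ is very ample, evaluation yields a closed embedding $\wt{X}\hra\P(V)$ equivariant for the $\A^{0|1}$-action of Example \ref{proj-action-ex}; passing to the free $\A^{0|1}$-quotients (Theorem \ref{free-quot-thm}) gives a closed embedding $X\hra\P^n_\Pi$.

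The \emph{main obstacle} is precisely this normalization. A priori the curvature is an arbitrary constant of $k$, whereas Proposition \ref{Heis-v-conn-prop} only yields a genuine $GQ(1)$-action for integral curvature, and the target $\P^n_\Pi$ forces the weight to be exactly $\pm1$. Rescaling the $\A^{0|1}$-action is what converts a nonzero constant into the required integer, and it is the only place where algebraic closedness of $k$ is used; the vanishing-curvature case must be separated off, since there no nonzero weight exists and one instead recovers honest projectivity. The remaining points — equivariance of the evaluation map and the descent of the closed embedding to the quotient — are routine once both $\A^{0|1}$-actions are known to be free.
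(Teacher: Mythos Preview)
Your proposal is correct and follows essentially the same route as the paper: pull back the standard torsor for (i); for (ii), take a very ample $L$ on $\wt X$, use Lemma~\ref{v-conn-lem}(i) (with $\ker v=\OO_X$) to get a $v$-connection whose curvature is a constant in $k$, split into the cases $c_0=0$ (descend $L$ to $X$ and conclude projectivity) and $c_0\neq 0$ (rescale the action and produce an $\A^{0|1}$-equivariant embedding of $\wt X$ into $\P^{n-1|n}$, then pass to quotients). The paper phrases the $c_0\neq 0$ case more concretely---after rescaling to $c(\nabla)=1$ it simply uses the sections $s_1,\dots,s_n,\nabla(s_1),\dots,\nabla(s_n)$ as homogeneous coordinates---but this is exactly your $GQ(1)$-equivariant embedding unpacked.

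One small slip: your claim that $\wt X_{\bos}\cong X_{\bos}$ is not correct in general. Locally $\OO_{\wt X}=\OO_X[\th]$, so $\OO_{\wt X}^+=\OO_X^+\oplus \th\,\OO_X^-$, which is strictly larger than $\OO_X^+$ whenever $\OO_X^-\neq 0$. What you actually need (and what the paper uses) is the weaker fact $\wt X_{\red}=X_{\red}$; then the restriction of the descended bundle $M$ to $X_{\red}$ agrees with $L|_{\wt X_{\red}}$ and is therefore ample, so $X$ is projective by \cite[Prop.~A.2]{FKP}. With that correction your argument goes through.
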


\begin{proof} 
(i) If $X$ is embedded into $\P^{n-1}_\Pi$ then we can consider the pullback to $X$ of the standard $\A^{0|1}$-torsor $\P^{n-1|n}\to \P^{n-1}_\Pi$. Its total space embeds into $\P^{n-1|n}$.

\noindent
(ii) Let $L$ be a very ample line bundle on $\wt{X}$ and let $v$ be the homological vector field on $\wt{X}$ corresponding to the $\A^{0|1}$-action. 
Since $\ker(v)=\OO_X\sub \OO_{\wt{X}}$, Lemma \ref{v-conn-lem}(i) implies that $L$ admits a $v$-connection $\nabla$.
Next, we look at the curvature $c(\nabla)$.

\noindent
{\bf Case 1}: $c(\nabla)\neq 0$. Then we can rescale $v$, so that $c(\nabla)=1$. Let $s_1,\ldots,s_n$ be the basis of global sections of $L$. Then
using $s_1,\ldots,s_n,\nabla(s_1),\ldots,\nabla(s_n)$ get a $\A^{0|1}$-equivariant embedding of $\wt{X}$ into $\P^{n-1|n}$. Passing to
quotients get an embedding of $X$ into $\P^{n-1}_\Pi$.

\noindent
{\bf Case 2}: $c(\nabla)=0$. Then $L$ descends to $X$. Since its restriction to $X_{red}$ is very ample, it follows that $X$ is projective (see e.g., \cite[Prop.\ A.2]{FKP}), hence it is $\Pi$-projective.
\end{proof}

\begin{example} Consider $\P^{m-1|m}\times \P^{n-1|n}$ with diagonal $\A^{0|1}$-action. Then by Theorem \ref{Pi-proj-thm}(ii), the quotient is $\Pi$-projective. On the other hand, this quotient is not projective
for $m\ge 3$, $n\ge 3$, as we can find an embedding $\P^{2|3}\to \P^{m-1|m}\times \P^{n-1|n}$ compatible with $\A^{0|1}$-actions, which implies that the quotient contains $\P^2_\Pi$ as a sub-superscheme.
\end{example}

\begin{example} Consider $G(a|0,n|n)$ with the $\A^{0|1}$-action defined in Example \ref{Grassm-action-ex}. Note that $G(a|0,n|n)$ is projective since the restriction of $\Ber(\UU)$ to the reduced space
$G(a,n)$ is ample (see \cite[Prop.\ A.2]{FKP}). It is easy to check $H^1(G(a|0,n|n),\OO)=0$ for $n\ge a+2$. Namely, one checks this by induction on $a$
using the fibration $F(a-1|0,a|0,n|n)\to G(a|0,n|n)$. Hence, by Theorem \ref{Pi-proj-thm}(ii), the quotient $G(a|0,n|n)/\A^{0|1}$ is $\Pi$-projective.
\end{example}


\subsection{Criterion for non-$\Pi$-projectivity}

The following criterion for non-$\Pi$-projectivity is similar to the one used in \cite[Sec.\ 7]{CNR}.

\begin{prop} If $X$ is non-projective and $H^1(X,\OO)^-=0$ then $X$ is not $\Pi$-projective.
\end{prop}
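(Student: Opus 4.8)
The plan is to prove the contrapositive: assuming $X$ is $\Pi$-projective and $H^1(X,\OO)^-=0$, I will show that $X$ is in fact projective. By Theorem~\ref{Pi-proj-thm}(i), $\Pi$-projectivity of $X$ gives an $\A^{0|1}$-torsor $\pi:\wt{X}\to X$ whose total space $\wt{X}$ is projective; fix a very ample line bundle $L$ on $\wt{X}$ and let $v$ be the homological vector field of the torsor action. Since the underlying topological spaces of $\wt{X}$ and $X$ coincide and $\ker(v)=\OO_X$, the hypothesis gives $H^1(\wt{X},\ker(v))^-=H^1(X,\OO_X)^-=0$, so Lemma~\ref{v-conn-lem}(i) produces a $v$-connection $\nabla$ on $L$.

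The heart of the argument is to upgrade $\nabla$ to a \emph{flat} $v$-connection. The curvature $c(\nabla)$ is an even global function with $v(c(\nabla))=0$, i.e.\ $c(\nabla)\in H^0(\wt{X},\ker(v))^+=H^0(X,\OO_X)^+$, and recall that $c(\nabla+\phi)=c(\nabla)+v(\phi)$ for an odd global function $\phi$ on $\wt X$. Thus it suffices to show that $c(\nabla)$ lies in the image of $v:H^0(\wt X,\OO_{\wt X})^-\to H^0(X,\OO_X)^+$. Since $\pi$ is a torsor the action is free, so by Proposition~\ref{free-action-prop} we have $\ker(v)=v(\OO_{\wt X})$, yielding a short exact sequence of sheaves
$$0\to \OO_X\to \OO_{\wt X}\xrightarrow{\ v\ } \Pi\OO_X\to 0.$$
Passing to the associated long exact sequence and extracting odd parts (using $H^0(\Pi\OO_X)^-=H^0(\OO_X)^+$) gives an exact segment
$$H^0(\wt X,\OO_{\wt X})^-\xrightarrow{\ v\ } H^0(X,\OO_X)^+\xrightarrow{\ \de\ } H^1(X,\OO_X)^-.$$
As $H^1(X,\OO_X)^-=0$ by assumption, the map $v$ on global sections is surjective, so there is an odd $\phi$ with $v(\phi)=-c(\nabla)$, and $\nabla':=\nabla+\phi$ is flat.

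Finally, a line bundle with a flat $v$-connection descends along the torsor $\pi$ (this is the content of Lemma~\ref{v-conn-equiv-structure-lem} together with the descent statement for coherent sheaves on an $\A^{0|1}$-torsor): there is a line bundle $L_0$ on $X$ with $\pi^*L_0\simeq L$. Restricting to the common reduced space $\wt X_{\red}=X_{\red}$, the bundle $L_0|_{X_{\red}}$ agrees with the ample bundle $L|_{\wt X_{\red}}$, so by \cite[Prop.\ A.2]{FKP} the superscheme $X$ is projective, contradicting the hypothesis. I expect the main obstacle to be the curvature step: one must verify that the curvature is genuinely a horizontal even function on the base and that the connecting homomorphism controlling its $v$-exactness is exactly the one landing in $H^1(X,\OO)^-$; everything else (producing the torsor, invoking the $v$-connection existence lemma, and descending the flat bundle) is a direct application of the results established above.
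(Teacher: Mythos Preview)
Your proof is correct, but it takes a longer route than the paper's. Both arguments begin the same way, invoking Theorem~\ref{Pi-proj-thm}(i) to obtain an $\A^{0|1}$-torsor $\pi:\wt X\to X$ with projective total space. From there, the paper simply notes that $\A^{0|1}$-torsors over $X$ are classified by $H^1(X,\OO)^-$; the vanishing hypothesis forces the torsor to be trivial, so there is a section $X\hookrightarrow\wt X$, and $X$ inherits projectivity as a closed subscheme of a projective superscheme. Your approach instead descends a very ample line bundle $L$ from $\wt X$ to $X$: you produce a $v$-connection via Lemma~\ref{v-conn-lem}(i), use the long exact sequence of the torsor to kill the curvature, and then invoke descent along $\pi$ to obtain an ample line bundle on $X$. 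This is sound (and in fact the curvature computation and the observation that the torsor splits are controlled by the very same segment of the long exact sequence), but it is more machinery than necessary: once you have observed that the connecting map $H^0(X,\OO_X)^+\to H^1(X,\OO_X)^-$ vanishes, you could have applied this to $1\in H^0(X,\OO_X)^+$ to get an odd function $\th$ on $\wt X$ with $v(\th)=1$, which already splits the torsor and finishes the argument in one line.
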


\begin{proof}
Suppose $X$ is $\Pi$-projective. Then there exists an $\A^{0|1}$-torsor $\wt{X}\to X$ such that $\wt{X}$ is projective. But if $H^1(X,\OO)^-=0$ then
there exists a section $X\to \wt{X}$, so $X$ is also projective, which is a contradiction.
\end{proof}

\begin{example} Using this criterion we can easily check that $G(1|1,2|2)$ is not $\Pi$-projective. Hence, $G(1|1,n|n)$ is not $\Pi$-projective for $n>1$.
\end{example}

\section{$1|1$-embeddability}

\subsection{General criteria for non-$1|1$-embeddability}\label{11-emb-gen-sec}

\begin{prop}\label{1|1-emb-fibr-prop}
If $X$ is $1|1$-embeddable then there exists an $\A^{0|1}$-fibration over $X$ whose total space is projective.
\end{prop}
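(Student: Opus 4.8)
The plan is to take, as the required $\A^{0|1}$-fibration, the projectivization of the universal $1|1$-bundle pulled back to $X$. Suppose $X$ is realized as a closed subscheme of $G:=G(1|1,n|n)$ for some $n$, and let $\SS\sub \OO_G^{n|n}$ be the universal subbundle of rank $1|1$. Set $W:=\SS|_X$, a rank $1|1$ bundle on $X$, and let $\pi:\wt{X}:=G(1|0,W)\to X$ be its relative projectivization, parametrizing $1|0$-dimensional subspaces of the fibers of $W$. The fiber of $\pi$ is $G(1|0,1|1)=\P^{0|1}=\A^{0|1}$, and the structure group $\GL(1|1)$ of $W$ acts on this fiber through the surjection $\GL(1|1)\to\Aut(\A^{0|1})$ onto the affine transformations $\th\mapsto a\th+\psi$. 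Hence $\pi$ is locally trivial with fiber $\A^{0|1}$ and transitions in $\Aut(\A^{0|1})$, i.e.\ an $\A^{0|1}$-fibration over $X$ (cf.\ Proposition \ref{A01-fibrations-prop}).

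It remains to prove that the total space $\wt{X}$ is projective. First I would carry out the construction relatively over all of $G$: the relative projectivization $G(1|0,\SS)$ is the partial flag superscheme $F:=F(1|0,1|1,n|n)$ of flags $L_1\sub L_2\sub V$ with $\dim L_1=1|0$ and $\dim L_2=1|1$. Since relative projectivization commutes with base change and $X\hra G$ is a closed embedding, we get $\wt{X}=F\times_G X$, a closed subscheme of $F$. Thus it suffices to show that $F$ is projective; then $\wt{X}$, being a closed subscheme of a projective superscheme, is projective.

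To see that $F$ is projective I would use the \emph{other} forgetful projection $\mathrm{pr}_1:F\to G(1|0,n|n)=\P^{n-1|n}$, $(L_1\sub L_2)\mapsto L_1$. Its fiber over $L_1$ is the set of $1|1$-subspaces $L_2\supset L_1$, i.e.\ the set of $0|1$-subspaces of $V/L_1$; as $L_1$ varies this exhibits $F$ as the relative Grassmannian $G(0|1,Q)$, where $Q$ is the universal quotient bundle of rank $(n-1)|n$ on $\P^{n-1|n}$. Under the parity swap $\Pi$ one has $G(0|1,Q)\simeq \P(\Pi Q)$, a genuine projective-space bundle over $\P^{n-1|n}$ with fibers $\P^{n-1|n-1}$. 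Such a bundle is projective: the relative $\OO(1)$ twisted by the pullback of $\OO_{\P^{n-1|n}}(1)$ is an even line bundle whose restriction to the bosonization is ample, so $F$ is projective by \cite[Prop.\ A.2]{FKP}.

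The main obstacle is precisely the projectivity of $F$: the two forgetful maps of the flag superscheme behave very differently, and the key point is that projecting to the \emph{even} Grassmannian $\P^{n-1|n}$ (rather than to $G(1|1,n|n)$, which is typically non-projective) realizes $F$ as a projective-space bundle over a projective base. The parity-swap identification $G(0|1,Q)\simeq \P(\Pi Q)$ and the construction of the ample even line bundle on the resulting bundle are the steps requiring care; everything else is a formal consequence of base change and of the fact that the fiber $\P^{0|1}$ equals $\A^{0|1}$.
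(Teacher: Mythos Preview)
Your argument is correct and follows the same route as the paper: pull back the $\A^{0|1}$-fibration $F(1|0,1|1,n|n)\to G(1|1,n|n)$ to $X$ and use that $F(1|0,1|1,n|n)$ is projective. The paper simply asserts the projectivity of $F$, whereas you supply a proof via the other projection $F\to \P^{n-1|n}$; this extra detail is fine and the argument goes through.
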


\begin{proof}
If $X$ is embedded into $G(1|1,n|n)$, then the total space of the pullback of the $\A^{0|1}$-fibration $F(1|0,1|1,n|n)\to G(1|1,n|n)$
embeds into $F(1|0,1|1,n|n)$ which is projective.
\end{proof}

\begin{cor}\label{1|1-emb-fibr-cor}
Assume that $\Pic(X)=0$. If $X$ is $1|1$-embeddable then there exists an $\A^{0|1}$-torsor over $X$ with projective total space.
\end{cor}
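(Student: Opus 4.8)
The plan is to deduce this directly from Proposition \ref{1|1-emb-fibr-prop} together with the classification of $\A^{0|1}$-fibrations recorded in Proposition \ref{A01-fibrations-prop}, the point being that the hypothesis $\Pic(X)=0$ collapses the distinction between $\A^{0|1}$-fibrations and $\A^{0|1}$-torsors. First I would apply Proposition \ref{1|1-emb-fibr-prop}: since $X$ is $1|1$-embeddable, there is an $\A^{0|1}$-fibration $\pi:\wt{X}\to X$ whose total space $\wt{X}$ is projective. The goal is then to upgrade this fibration to a torsor without changing the total space.

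The key observation is that, by Proposition \ref{A01-fibrations-prop}, every $\A^{0|1}$-fibration carries an associated even line bundle $L$ on $X$, given concretely by $\Pi L^{-1}=\pi_*\OO_{\wt{X}}/\OO_X$, and an $\A^{0|1}$-fibration is an $\A^{0|1}$-torsor exactly when $L$ is trivial (so that $\Pi L=\Pi\OO_X=\A^{0|1}$ and the $\Pi L$-torsor becomes an $\A^{0|1}$-torsor). In cohomological terms this says that the class of the fibration in $H^1(X,\Aut(\A^{0|1}))$ comes from the fiber $H^1(X,\A^{0|1})$ precisely when its image under $H^1(X,\Aut(\A^{0|1}))\to H^1(X,\G_m)=\Pic(X)$ vanishes. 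Since $\Pic(X)=0$ by hypothesis, the line bundle $L$ attached to our fibration is trivial, and hence the fibration reduces to an $\A^{0|1}$-torsor on the same total space $\wt{X}$, which is projective.

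The one step that deserves care is the reduction of structure group: I must check that triviality of the class $[L]\in\Pic(X)$ genuinely allows one to normalize the affine transition automorphisms $\th\mapsto a\th+\psi$ so that the $\G_m$-part $a$ equals $1$, leaving pure odd translations and thus a bona fide torsor structure. This is exactly where the splitting $\G_m\to\Aut(\A^{0|1})$ of the exact sequence $0\to\A^{0|1}\to\Aut(\A^{0|1})\to\G_m\to 1$ (already used in the proof of Proposition \ref{A01-fibrations-prop}) enters: it guarantees that the fiber of $H^1(X,\Aut(\A^{0|1}))\to H^1(X,\G_m)$ over the trivial class is accounted for by $H^1(X,\A^{0|1})$, so that a fibration with trivial associated line bundle really does arise from a torsor rather than merely mapping to $0$. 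Granting this, no further computation is required.
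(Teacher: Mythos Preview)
Your argument is correct and matches the paper's proof exactly: the paper simply observes that by Proposition~\ref{A01-fibrations-prop}, when $\Pic(X)=0$ every $\A^{0|1}$-fibration over $X$ is an $\A^{0|1}$-torsor, and then invokes Proposition~\ref{1|1-emb-fibr-prop}. Your additional paragraph on the reduction of structure group is more detail than the paper gives, but it is accurate and just unpacks what is already contained in the proof of Proposition~\ref{A01-fibrations-prop}.
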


\begin{proof} Indeed, by Proposition \ref{A01-fibrations-prop}, if $\Pic(X)=0$ then every $\A^{0|1}$-fibration over $X$ is an $\A^{0|1}$-torsor.
\end{proof}

\begin{prop}\label{non-1|1-emb-prop-1}  Let $X$ be a proper superscheme over $k$ of positive dimension.

\noindent
(i) Assume that $\Pic(X)=0$ and $H^1(X,\OO)^-=0$. Then $X$ is not $1|1$-embeddable. 

\noindent
(ii) Assume that $H^1(X,L)^-=0$ for every even line bundle $L$ on $X$ and $X$ is not projective. Then $X$ is not $1|1$-embeddable.
\end{prop}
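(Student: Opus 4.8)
The plan is to prove the contrapositive in both parts: assume $X$ is $1|1$-embeddable and derive a contradiction. The key structural input is Corollary \ref{1|1-emb-fibr-cor}, together with the observation that in both hypotheses we have $\Pic(X)=0$ available (explicitly in (i), and in (ii) the vanishing $H^1(X,L)^-=0$ for all even $L$ will be combined with the properness to control torsors). So the first move, in case (i), is: since $\Pic(X)=0$, Corollary \ref{1|1-emb-fibr-cor} gives an $\A^{0|1}$-torsor $\pi:\wt{X}\to X$ with $\wt{X}$ projective.

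For part (i), the heart of the matter is to use $H^1(X,\OO)^-=0$ to show the torsor $\pi$ is trivial, i.e. admits a section $X\to\wt{X}$. By Proposition \ref{A01-fibrations-prop} an $\A^{0|1}$-torsor corresponds to a class in $H^1(X,\A^{0|1})$, and since $\A^{0|1}$ is abelian this is just $H^1(X,\OO_X)^-$; the vanishing of this group forces the torsor to be trivial, giving a section. A section $X\to\wt{X}$ realizes $X$ as a closed subscheme of the projective $\wt{X}$, hence $X$ is projective. But $\Pic(X)=0$ for a proper superscheme of positive dimension contradicts projectivity, since a projective superscheme carries an even line bundle that is ample on the bosonization, and such a bundle is in particular nontrivial on a positive-dimensional base. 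This last step is the one I would phrase carefully, citing \cite[Prop.\ A.2]{FKP} for the equivalence between projectivity and the existence of an even line bundle ample on $X_{red}$: positive dimensionality of $X$ means $X_{red}$ has an ample class that cannot be trivial, contradicting $\Pic(X)=0$.

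For part (ii), the difficulty is that we are no longer handed $\Pic(X)=0$, so we cannot directly invoke Corollary \ref{1|1-emb-fibr-cor} to upgrade the fibration to a torsor. Instead I would start from Proposition \ref{1|1-emb-fibr-prop}, which gives an $\A^{0|1}$-fibration $\pi:\wt{X}\to X$ with $\wt{X}$ projective. By Proposition \ref{A01-fibrations-prop} this fibration is classified by an even line bundle $L$ on $X$ together with a $\Pi L$-torsor, the latter living in $H^1(X,\Pi L)=H^1(X,L)^-$. The hypothesis $H^1(X,L)^-=0$ for \emph{every} even line bundle $L$ is exactly what kills this torsor class: for the particular $L$ arising from $\pi$, the $\Pi L$-torsor is trivial, so the fibration admits a section $X\to\wt{X}$. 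As in part (i), this exhibits $X$ as a closed subscheme of the projective $\wt{X}$, hence $X$ is projective, contradicting the assumption that $X$ is not projective.

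The main obstacle I anticipate is the bookkeeping in part (ii): one must be careful that the line bundle $L$ produced by Proposition \ref{A01-fibrations-prop} is even (so that $H^1(X,L)^-$ is the relevant cohomology group) and that the ``splitting'' of the extension $0\to\OO_X\to\pi_*\OO_{\wt{X}}\to\Pi L^{-1}\to 0$ furnished by the vanishing of the $\Pi L$-torsor class genuinely yields a section of $\pi$ as a morphism of superschemes, not merely a splitting of sheaves. Here I would lean on the explicit description in the proof of Proposition \ref{A01-fibrations-prop}, where an $\OO_X$-linear splitting of that sequence is identified with a point of the $\Pi L$-torsor, and a splitting as $\OO_X$-algebras (equivalently, a trivialization $\wt{X}\simeq X\times\A^{0|1}$) corresponds to the torsor being trivial. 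Once the section is in hand, the reduction to projectivity and the contradiction proceed identically to part (i), so the only nontrivial content beyond (i) is correctly reading off the relevant cohomological obstruction from Proposition \ref{A01-fibrations-prop}.
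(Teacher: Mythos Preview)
Your proposal is correct and follows essentially the same route as the paper's proof: in (i) you use Corollary~\ref{1|1-emb-fibr-cor} to get an $\A^{0|1}$-torsor with projective total space, trivialize it via $H^1(X,\OO)^-=0$, and derive projectivity of $X$, contradicting $\Pic(X)=0$; in (ii) you use Proposition~\ref{1|1-emb-fibr-prop} and Proposition~\ref{A01-fibrations-prop} to reduce to a $\Pi L$-torsor, trivialize it via $H^1(X,L)^-=0$, and obtain a section contradicting non-projectivity. Your extra care about whether a trivialization of the $\Pi L$-torsor really yields a scheme-theoretic section is well placed and is indeed justified by the equivalence in Proposition~\ref{A01-fibrations-prop}.
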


\begin{proof}
(i) If $X$ were $1|1$-embeddable we would deduce from Corollary \ref{1|1-emb-fibr-cor}
that there exists an $\A^{0|1}$-torsor $\wt{X}\to X$ with projective $\wt{X}$.
But any such torsor is trivial since $H^1(X,\OO)^-=0$. Hence, we would get that $X$ is projective
which is impossible if $X$ has trivial $\Pic$.

\noindent
(ii) By Proposition \ref{A01-fibrations-prop}, any $\A^{0|1}$-fibration $\wt{X}\to X$
has a structure of a $\Pi L$-torsor, for some even line bundle $L$ on $X$. If $H^1(X,L)^-=0$ then this torsor is necessarily trivial,
hence, there exists a section $X\to \wt{X}$. By Proposition \ref{1|1-emb-fibr-prop}, if $X$ were $1|1$-embeddable then there would exist
an $\A^{0|1}$-fibration with projective $\wt{X}$. Since $X$ is embedded into $\wt{X}$, it would be projective which is a contradiction.
\end{proof}

\begin{remark}
It is instructive to take $X=G(1|1,2|2)$ and to see which assumptions of Proposition \ref{non-1|1-emb-prop-1} are not satisfied in this case.
We know that $X$ is not projective and we have 
$$H^1(X,\OO)^-=H^1(\P^1\times \P^1, (\OO(-1)\boxtimes \OO(-1))^{\oplus 2})=0.$$
However, $\Pic(X)\neq 0$: we have a nontrivial line bundle $L=\Ber(\SS)$, where $\SS$ is the tautological bundle of rank $1|1$. We have $L|_{\P^1\times \P^1}=\OO(-1)\boxtimes\OO(1)$.
Hence,
$$H^1(X,L)^-=H^1(\P^1\times\P^1, (\OO(-2)\boxtimes \OO)^2)\neq 0.$$
\end{remark}

\begin{example}\label{CY22-ex} 
For any smooth variety $X_0$ over a field $k$, a vector bundle $V$, and a class $e\in H^1(X_0,T_{X_0}\ot \bigwedge^2 V)$, we can define a superscheme
$X=X(X_0,V,e)$ as follows. Let $\OO_X^+\to \OO_{X_0}$ be the square zero extension of $\OO_{X_0}$ by $\bigwedge^2 V$ with the class $e$,
and set $\OO_X^-=V$. The multiplication of $\OO_X^+$ on $\OO_X^-$ is given by the $\OO_{X_0}$-module structure on $V$ via the projection $\OO_X^+\to \OO_{X_0}$, while
the multiplication 
$$\OO_X^-\times \OO_X^-\to \bigwedge^2 V\sub \OO_X^+$$ 
is given by the wedge product.

Now let us specialize to the case when $X_0=S$ is a smooth projective surface, $V$ has rank $2$ and $\bigwedge^2 V\simeq \om_S$. 
Then $X=X(S,V,e)$ is a smooth CY supervariety of dimension $2|2$ (this construction is used in \cite{CNR} in the case $S=\P^2$).

Assume now that in addition $\Pic(S)=\Z$, $H^1(S,V)=0$, $k$ has characteristic zero, and
$$e=c_1(L)\in H^1(S,\Om^1_S)\simeq H^1(S,T_S\ot \om_S)$$
where $L$ is an ample line bundle on $S$.
Then Proposition \ref{non-1|1-emb-prop-1} implies that $X=X(S,V,c_1(L))$ is not $1|1$-embeddable. Indeed, by assumption we have $H^1(X,\OO_X^-)=0$. The triviality of $\Pic(X)$
is established similarly to \cite[Thm.\ 5.2]{CNR}: we use the exact sequence
$$1\to 1+\om_S\to (\OO_X^+)^*\to \OO_S^*\to 1$$
and observe that the connecting homomorphism $H^1(S,\OO_S^*)\to H^2(S,\om_S)$ sends the class of a line bundle $M$ to $c_1(M)\cup c_1(L)$. Since $\Pic(S)=\Z$, this map
is injective, so no line bundle on $S$ extends to $\OO_X^+$. 
\end{example}



One can try to relax the condition of vanishing of $H^1(X,\OO)^-$ in Proposition \ref{non-1|1-emb-prop-1}. We give an example of this in the situation when we have an $\A^{0|1}$-torsor
$\wt{X}\to X$. Note that for such a torsor the exact sequence of sheaves
$$0\to \OO_X\to \OO_{\wt{X}}\rTo{v} \Pi\OO_X\to 0$$
gives rise to a long exact sequence
\begin{equation}\label{H1-torsor-long-ex-seq}
\ldots\to H^0(X,\OO_{\wt{X}})^-\rTo{v} H^0(X,\OO_X)^+\to H^1(\OO_X)^-\to H^1(\OO_{\wt{X}})^-\to\ldots
\end{equation}

\begin{prop}\label{non-1|1-emb-prop-2}
Let $\wt{X}\to X$ be an $\A^{0|1}$-torsor. Assume that $\Pic(X)=0$, $H^0(X,\OO_X)=k$ (of characteristic zero),
 the space $H^1(X,\OO_X)^-$ has dimension $\le 1$ (e.g., $H^1(\wt{X},\OO_{\wt{X}})^-=0$), and $\wt{X}$ is non-projective. Then
$X$ is not $1|1$-embeddable.
\end{prop}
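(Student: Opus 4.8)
The plan is to suppose that $X$ is $1|1$-embeddable and to derive a contradiction by showing that the given non-projective torsor $\wt X$ would have to be projective. Since $\Pic(X)=0$, Corollary \ref{1|1-emb-fibr-cor} produces an $\A^{0|1}$-torsor $\pi':\wt X'\to X$ whose total space $\wt X'$ is projective. The whole argument then reduces to comparing $\wt X'$ with the given $\wt X$, the point being that under the hypotheses there is essentially only one nontrivial $\A^{0|1}$-torsor over $X$.

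First I would recall that isomorphism classes of $\A^{0|1}$-torsors over $X$ are classified by $H^1(X,\A^{0|1})=H^1(X,\OO_X)^-$, which by assumption has dimension $\le 1$ over $k$. The key observation is that $\G_m\sub\Aut(\A^{0|1})$ acts by the rescalings $\th\mapsto\la\th$, and twisting a torsor by such an automorphism multiplies its class in $H^1(X,\OO_X)^-$ by $\la$ while leaving the total space unchanged as an $X$-superscheme. Since $k^*$ acts transitively on the nonzero vectors of the (at most one-dimensional) space $H^1(X,\OO_X)^-$, any two torsors with nonzero classes have isomorphic total spaces. Thus there are at most two isomorphism types of total space: the trivial torsor $\A^{0|1}\times X$, and, when $\dim H^1(X,\OO_X)^-=1$, a single nontrivial type.

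Next I would rule out that $\wt X'$ is of trivial type. The trivial torsor $\A^{0|1}\times X$ contains $X$ as a closed subscheme via the zero section, so it is projective if and only if $X$ is. But $X$ cannot be projective: since $\wt X$ is non-projective we have $\dim X\ge 1$ (a zero-dimensional proper superscheme, and any torsor over it, is finite and hence projective), and a positive-dimensional projective superscheme carries an even line bundle restricting to an ample, hence nontrivial, bundle on $X_{\red}$, contradicting $\Pic(X)=0$. Therefore the trivial torsor is itself non-projective, and since $\wt X'$ is projective it must be of the nontrivial type; in particular $\dim H^1(X,\OO_X)^-=1$.

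Finally I would identify $\wt X$ with $\wt X'$. For this the given $\wt X$ must represent a \emph{nonzero} class in $H^1(X,\OO_X)^-$, and this is exactly what the hypotheses secure. Under the sufficient condition $H^1(\wt X,\OO_{\wt X})^-=0$, a trivial torsor would satisfy $\pi_*\OO_{\A^{0|1}\times X}=\OO_X\oplus\Pi\OO_X$, whence $H^1(\wt X,\OO_{\wt X})^-\supseteq H^1(X,\OO_X)^-$; so the vanishing $H^1(\wt X,\OO_{\wt X})^-=0$ (which via the long exact sequence \eqref{H1-torsor-long-ex-seq} is precisely what yields $\dim H^1(X,\OO_X)^-\le 1$) forces $H^1(X,\OO_X)^-=0$ for a trivial $\wt X$, contradicting $\dim H^1(X,\OO_X)^-=1$. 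Hence $\wt X$ is of the nontrivial type, so $\wt X\cong\wt X'$ as superschemes by the second paragraph; but then $\wt X$ is projective, a contradiction. I expect the main obstacle to be precisely this last identification: one must exclude the possibility that the given non-projective torsor is the trivial product $\A^{0|1}\times X$ while the projective torsor $\wt X'$ is a genuinely different nontrivial torsor. It is exactly the bound $\dim H^1(X,\OO_X)^-\le 1$, together with the non-projectivity of $\wt X$, that collapses all nontrivial torsors to a single isomorphism type and pins $\wt X$ to it.
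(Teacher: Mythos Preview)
Your approach is exactly the paper's: use $\Pic(X)=0$ with Corollary~\ref{1|1-emb-fibr-cor} to obtain a projective torsor $\wt X'$, classify torsors by the at-most-one-dimensional space $H^1(X,\OO_X)^-$, collapse all nonzero classes to a single total-space type via the $k^*$-rescaling, rule out the trivial type for $\wt X'$ using $\Pic(X)=0$, and conclude $\wt X'\cong\wt X$. The paper's terse phrase ``All come from $H^0(\OO_X^+)=k$'' is precisely the assertion that the connecting map $k=H^0(X,\OO_X)^+\to H^1(X,\OO_X)^-$ in \eqref{H1-torsor-long-ex-seq} is surjective, i.e.\ every torsor class is a scalar multiple of $[\wt X]$---the same mechanism you invoke via rescaling.

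You are right to flag the one delicate point, namely that one needs $[\wt X]\neq 0$; your argument for this under the ``e.g.'' hypothesis $H^1(\wt X,\OO_{\wt X})^-=0$ is correct. Your final sentence, however, does not close this under the bare hypothesis $\dim H^1(X,\OO_X)^-\le 1$: non-projectivity of $\wt X$ does not by itself rule out $\wt X\cong X\times\A^{0|1}$ when a genuinely different nontrivial torsor exists. The paper's proof has the very same lacuna (surjectivity of the connecting map follows from the e.g.\ condition but not from $\dim\le 1$ alone), so your proof is as complete as the paper's and the gap you anticipated is real but shared.
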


\begin{proof} First, we see that any $\A^{0|1}$-fibration over $X$ is an $A^{0|1}$-torsor. They are classified by $H^1(\OO_X)^-$. All come from $H^0(\OO_X^+)=k$. 
So would get that $\wt{X}$ is projective.
Note that the exact sequence \eqref{H1-torsor-long-ex-seq} implies that if $H^1(\wt{X},\OO_{\wt{X}})^-=0$ then $H^1(X,\OO_X)^-$ is at most $1$-dimensional.
\end{proof}

\subsection{More examples}

Recall that in Example \ref{CY22-ex} we considered CY supervarieties of dimension $2|2$ of the form $X=X(S,V,e)$, where $V$ is a rank $2$ vector bundle on a smooth projective
surface $S$,
such that $\bigwedge^2 V\simeq \om_S$ and $e\in H^1(S,\Om^1_S)$ is a nonzero class. 
In the case $S=\P^2$ it was proved in \cite{CNR} that every $X(\P^2,V,e)$ is embeddable into some supergrassmannian. For other surfaces $S$ the question of embeddability of
$X(S,V,e)$ is open. The following result was suggested by Emile Bouaziz.

\begin{prop}\label{CY22-prop}
(i) Assume $V=\Om^1_S$. Then for any ample line bundle $L$ on $S$, the supervariety $X(S,V,c_1(L))$ is $\Pi$-projective.

\noindent
(ii) For any nontrivial $\A^{0|1}$-torsor $\wt{X}\to X(S,V,e)$, the total space $\wt{X}$ is projective. In particular, if $H^1(S,V)\neq 0$ then
$X(S,V,e)$ is embeddable into some supergrassmannian.
\end{prop}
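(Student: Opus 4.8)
The plan is to prove (ii) first, as it is the technical core, and then derive (i) from it together with the $\Pi$-projectivity criterion of Theorem \ref{Pi-proj-thm}(ii). Throughout write $S=(\wt X)_{\red}=|X|$ and recall that $\OO_X^-=V$, while $\OO_X^+$ is the square-zero extension of $\OO_S$ by $\bigwedge^2V\simeq\om_S$ with class $e$.

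For (ii), the first and main step is to compute the bosonization $(\wt X)_{\bos}$. Choosing a local odd coordinate $\th$ with $v(\th)=1$ for the homological vector field of the torsor, one finds $\OO_{\wt X}^+=\OO_X^+\oplus(\OO_X^-\cdot\th)=\OO_S\oplus\om_S\oplus V\th$, and a direct check (using $\om_S\cdot V=0$, $\th^2=0$, and $\xi\we\xi'\in\bigwedge^2V=\om_S$ for odd $\xi,\xi'$) shows that the nilpotent ideal $M=\om_S\oplus V\th$ satisfies $M^2=0$. Thus $(\wt X)_{\bos}$ is a square-zero thickening of $S$ by the $\OO_S$-module $M$. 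Tracking the torsor transition functions $\th\mapsto\th+c_{ij}$, with $c=[c_{ij}]\in H^1(X,\OO_X^-)=H^1(S,V)$, reveals two facts: $M$ is the extension $0\to\om_S\to M\to V\to 0$ whose class in $\Ext^1(V,\om_S)\simeq H^1(S,V^\vee\ot\om_S)\simeq H^1(S,V)$ (using $V^\vee\ot\om_S\simeq V$ for $\rk V=2$) is exactly $c$; and the Kodaira--Spencer class $\tilde e\in H^1(S,T_S\ot M)$ of the thickening is the image $\iota_*e$ of $e$ under $\om_S\hra M$, since the even functions of $S$ only pick up their $\om_S$-valued correction $e$.

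The second step is the obstruction computation. For an ample line bundle $A$ on $S$, the obstruction to lifting $A$ to $(\wt X)_{\bos}$ is $c_1(A)\cup\tilde e\in H^2(S,M)$, which equals $\iota_*(c_1(A)\cup e)$; it therefore suffices to show $\iota_*\colon H^2(S,\om_S)\to H^2(S,M)$ vanishes when $c\neq0$. From the long exact sequence of $0\to\om_S\to M\to V\to0$, the kernel of $\iota_*$ is the image of $\partial\colon H^1(S,V)\to H^2(S,\om_S)$, and $\partial$ is cup product with the extension class $c$ via $V\ot V\to\bigwedge^2V=\om_S$. Under Serre duality this pairing $H^1(S,V)\times H^1(S,V)\to H^2(S,\om_S)=k$ is perfect (it is the identification $V^\vee\ot\om_S\simeq V$ that turns it into a self-pairing), so for $c\neq0$ the functional $\partial=\langle-,c\rangle$ is surjective, forcing $\iota_*=0$. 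Hence every line bundle on $S$, in particular an ample one, lifts to $(\wt X)_{\bos}$; its restriction to $(\wt X)_{\red}=S$ is ample, so $(\wt X)_{\bos}$, and therefore $\wt X$, is projective by \cite[Prop.\ A.2]{FKP}. The final clause follows from \cite{BHRP}, since a torsor is in particular an $\A^{0|1}$-fibration with projective total space, whence $X$ is embeddable into some supergrassmannian.

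For (i) I would apply this to the nontrivial torsor $\wt X$ with class $c=c_1(L)\in H^1(S,\Om^1_S)=H^1(X,\OO_X)^-$ (nonzero since $L^2>0$), which is projective by (ii). To upgrade this to $\Pi$-projectivity of $X$, I would produce a very ample line bundle $\bar A$ on $\wt X$ carrying a $v$-connection of curvature $1$; by Propositions \ref{Heis-v-conn-prop} and \ref{GQ1-torsor-prop} this is a weight-$1$ $GQ(1)$-action, yielding a very ample $\Pi$-symmetric $1|1$-bundle $\pi_*\bar A$ and hence an embedding $X\hra\P^n_\Pi$, exactly as in Case 1 of the proof of Theorem \ref{Pi-proj-thm}(ii). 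The curvature of any $v$-connection lies in $H^0(\wt X,\ker v)^+=H^0(X,\OO_X)^+=k\oplus H^0(S,\om_S)$; its nilpotent part lies in $H^0(S,\om_S)$, and since $\om_S\cdot\OO_X^-=0$ the connecting map $H^0(X,\OO_X)^+\to H^1(X,\OO_X)^-$ of \eqref{H1-torsor-long-ex-seq} (cup with $c$) kills it, so by exactness it lies in $\im(v)$ and is absorbed by replacing $\nabla$ with $\nabla+\phi$. The remaining constant part is either nonzero, in which case rescaling $v$ normalizes it to $1$, or zero, in which case $\bar A$ descends and $X$ is already projective; this makes the argument robust to $p_g(S)>0$ and so does not literally require the hypothesis $H^0(X,\OO)^+=k$. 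The main obstacle is the existence of a suitable $\bar A$: the obstruction to a $v$-connection on $\bar A$ is the image in $H^1(\wt X,\OO_{\wt X})^-$ of a class in $H^1(X,\OO_X)^-=H^1(S,\Om^1_S)$, under a map whose kernel is the line $k\cdot c_1(L)$ (by the computation above), so one must choose the ample lift $\bar A$ so that its obstruction class lands in this kernel before the curvature normalization can proceed.
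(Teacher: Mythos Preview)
Your argument for (ii) is correct and is essentially the paper's own proof: identify $\OO_{\wt X}^+$ as a square-zero thickening of $\OO_S$ by an ideal $\II$ fitting into $0\to\om_S\to\II\to V\to 0$ with extension class $\alpha$, observe that the thickening class $\wt e$ is the pushforward of $e$, and kill the lifting obstruction $c_1(M)\cup\wt e$ by showing the connecting map $H^1(S,V)\to H^2(S,\om_S)$ is surjective via Serre duality.

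Your approach to (i), however, is genuinely different from the paper's and, as written, is incomplete. You yourself flag the gap: you need a very ample $\bar A$ on $\wt X$ whose obstruction class in $H^1(X,\OO_X)^-=H^1(S,\Om^1_S)$ lies in the line $k\cdot c_1(L)$, but you do not show such a choice exists. Since $H^1(S,\Om^1_S)$ is typically much larger than one-dimensional, the hypothesis of Theorem \ref{Pi-proj-thm}(ii) (that $H^1(X,\OO_X)^-\to H^1(\wt X,\OO_{\wt X})^-$ vanish) fails, and an extra argument is required. One can in fact push your idea through: varying the lift of a fixed ample $A$ on $S$ changes the obstruction by the image of $H^1(S,\II)\to H^1(S,V)$, which is the hyperplane $c_1(L)^\perp$ (kernel of cup with $c_1(L)$); since $L^2\neq 0$ this hyperplane is complementary to the line $k\cdot c_1(L)$, so every coset meets that line. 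But this computation is absent from your proposal.

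The paper's proof of (i) avoids this entirely and does not use (ii). It observes that the construction $X_0\mapsto X(X_0,\Om^1_{X_0},c)$ is functorial for morphisms (and closed embeddings go to closed embeddings), and that $X(\P^n,c_1(\OO(1)))$ is the second truncation $\OO_{\P^n_\Pi}/\NN^3$ of the $\Pi$-projective space, hence $\Pi$-projective. Replacing $L$ by a power so that it is very ample gives an embedding $S\hra\P^n$ with $c_1(L)=i^*c_1(\OO(1))$, and functoriality then embeds $X(S,c_1(L))$ into $X(\P^n,c_1(\OO(1)))\sub\P^n_\Pi$. This is shorter and more robust than the $v$-connection route.
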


\begin{proof}
(i) Let us specialize the construction of the superscheme $X(X_0,V,e)$ (see Example \ref{CY22-ex}) to the case $V=\Om^1_{X_0}$ and 
$e\in H^1(X_0,T_{X_0}\ot \Om^2_{X_0})$ comes from a class $c\in H^1(X_0,\Om^1_{X_0})$ via the natural map 
$$\Om^1_{X_0}\to \Hom(\Om^1_{X_0},\Om^2_{X_0})\simeq T_{X_0}\ot \Om^2_{X_0}$$
induced by the wedge product. Let us call the resulting superscheme $X(X_0,c)$. 
It is easy to see that if $f:Y_0\to X_0$ is a morphism, and the class $c'\in H^1(Y_0,\Om^1_{Y_0})$ is obtained from $c\in H^1(X_0,\Om^1_{X_0})$
via the natural restriction map $f^*:H^1(X_0,\Om^1_{X_0})\to H^1(Y_0,\Om^1_{Y_0})$ then $f$ extends to a morphism
$\wt{f}:X(Y_0,c')\to X(X_0,c)$. Furthermore, if $f$ is a closed embedding then so is $\wt{f}$.

It is well known that for $X_0=\P^n$ and $c=c_1(\OO(1))$, the superscheme $X(\P^n,c_1(\OO(1)))$ is the truncation of the $\Pi$-projective space $\P^n_{\Pi}$,
i.e., obtained by replacing the structure sheaf with $\OO_{\P^n_{\Pi}}/\NN^3$, where $\NN$ is the ideal generated by odd functions (see e.g., \cite{Noja-Pi}).
In particular, $X(\P^n,c_1(\OO(1)))$ is $\Pi$-projective.

We are interested in the superscheme $X(S,c_1(L))$ for an ample line bundle $L$ on $S$. Replacing $L$ by a high power leads to rescaling $c_1(L)$,
hence, does not change this superscheme up to an isomorphism. Hence, we can assume that $L=i^*\OO(1)$ for an embedding $i:S\hra \P^n$.
Then $X(S,c_1(L))$ embeds into $X(\P^n,c_1(\OO(1)))$, hence it is $\Pi$-projective.

\noindent
(ii) Let us fix a nonzero class $\a\in H^1(S,V)$, and let $\wt{X}\to X=X(S,V,e)$ be the corresponding $\A^{0|1}$-torsor. Then $\OO_{\wt{X}}^+$ is 
an extension of $\OO_S$ by a square zero ideal $\II\sub \OO_{\wt{X}}^+$. Furthermore, since $\OO_X^+\sub \OO_{\wt{X}}^+$, we have an exact sequence
of $\OO_S$-modules
\begin{equation}\label{V-omS-ext}
0\to \om_S\to \II\to V\to 0
\end{equation}
and the class $\wt{e}\in H^1(S,T_S\ot \II)$ defining the extension $\OO_{\wt{X}}^+\to \OO_S$ is the image of $e$ under the natural map $H^1(S,T_S\ot \om_S)\to H^1(S,T_S\ot \II)$.
Furthermore, it is easy to see that the class $\a'\in \Ext^1(V,\om_S)$ of the extension \eqref{V-omS-ext} is the image of $\a$ under the isomorphism
$\Ext^1(V,\om_S)\simeq H^1(S,V^\vee\ot\om_S)\simeq H^1(S,V)$.

By \cite[Prop.\ A.2]{FKP}, to prove that $\wt{X}$ is projective, it is enough to show that some (even) ample line bundle on $S$ extends to a line bundle on $\wt{X}$. The standard argument using 
the exact sequence 
$$0\to 1+\II\to (\OO_{\wt{X}}^+)^*\to \OO_S^*\to 1$$
shows that the obstacle to extending a line bundle $M$ on $S$ to $\wt{X}$ is given by the product $c_1(M)\cup \wt{e}\in H^2(S,\II)$. This means that this obstacle is the image of
the product $c_1(M)\cup e\in H^2(S,\om_S)$ under the natural map $H^2(S,\om_S)\to H^2(S,\II)$. We claim that the latter map is zero.
Indeed, the exact sequence of cohomology associated with \eqref{V-omS-ext} shows that it is enough to prove surjectivity of the connecting homomorphism
$$H^1(S,V)\to H^2(S,\om_S)$$
which is given by the cup product with $\a'\in \Ext^1(V,\om_S)$. But this immediately follows from nonvanishing of $\a'$ and the fact that the Serre duality pairing
$$H^1(S,V)\ot \Ext^1(V,\om_S)\to H^2(S,\om_S)$$
is nondegenerate.

Hence, if $H^1(X,V)\neq 0$ then there exists an $\A^{0|1}$ torsor $\wt{X}\to X$ such that $\wt{X}$ is projective, and by \cite[Prop.\ 4.28]{BHRP}, $X$ is embeddable into some supergrassmannian.
\end{proof}

\begin{prop}\label{non-embed-G21-prop} 
Assume the characteristic is zero.
Let us consider the free $\A^{0|1}$-action on $X=G(2|1,4|4)$ defined in Example \ref{Grassm-action-ex}. Then the quotient $Y=G(2|1,4|4)/\A^{0|1}$ is not $1|1$-embeddable. 
\end{prop}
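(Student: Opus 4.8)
The plan is to realize $Y$ as the base of the $\A^{0|1}$-torsor $\pi\colon X=G(2|1,4|4)\to Y$, which exists and is a torsor by Theorem \ref{free-quot-thm} since the action is free (by the Lemma above, as $a=2\neq 1=b$), and then to verify the four hypotheses of Proposition \ref{non-1|1-emb-prop-2} with $\wt X=X$: that $X$ is non-projective, that $H^0(Y,\OO_Y)=k$, that $\dim H^1(Y,\OO_Y)^-\le 1$, and that $\Pic(Y)=0$. I would emphasize that Proposition \ref{non-1|1-emb-prop-1}(i) is \emph{not} available here, precisely because $H^1(Y,\OO_Y)^-$ turns out to be one-dimensional rather than zero; this is what forces the use of Proposition \ref{non-1|1-emb-prop-2}. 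All four hypotheses will reduce to statements about the supergrassmannian $X=G(2|1,4|4)$ itself.

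The technical core is the pair of vanishings $H^0(X,\OO_X)^-=0$ and $H^1(X,\OO_X)^-=0$. I would prove these by filtering $\OO_X$ by powers of the ideal $\NN$ generated by odd functions; the associated graded is $\bigoplus_j\bigwedge^j\EE$, where $\EE=\SS_0\ot\QQ_1^\vee\oplus\SS_1\ot\QQ_0^\vee$ is the odd conormal bundle on the bosonization $X_{red}=G(2,4)\times\P^3$. Here $\SS_0,\QQ_0$ are the tautological sub- and quotient bundles on $G(2,4)$, while $\SS_1=\OO_{\P^3}(-1)$ and $\QQ_1$ is the quotient bundle on $\P^3$. The spectral sequence of this filtration bounds $H^i(X,\OO_X)^-$ by $\bigoplus_{j\ \mathrm{odd}}H^i(X_{red},\bigwedge^j\EE)$, and each such term is handled by Bott vanishing together with the Künneth formula; for instance the degree-one piece vanishes because $H^\bullet(G(2,4),\SS_0)=0$, $H^0(\P^3,\QQ_1^\vee)=0$, and $H^{<2}(\P^3,\OO(-1))=0$. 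Granting these vanishings, the even part of $H^0(Y,\OO_Y)=H^0(X,\ker v)$ is the constants $k$ and its odd part lies in $H^0(X,\OO_X)^-=0$, giving $H^0(Y,\OO_Y)=k$; and the long exact sequence \eqref{H1-torsor-long-ex-seq}, with both odd cohomologies of $\OO_X$ vanishing, yields $H^1(Y,\OO_Y)^-\cong H^0(Y,\OO_Y)^+=k$, so it is exactly one-dimensional.

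For the last two points I would compute $\Pic(X)$, the claim being $\Pic(G(2|1,4|4))=\Z\cdot\Ber(\SS)$. I would establish this by the obstruction-theoretic argument of \cite[Thm.\ 5.2]{CNR}: a class in $\Pic(X_{red})=\Z^2$ lifts through the nilpotent thickening only if its cup product with the extension class vanishes, and one checks that only the $\Ber(\SS)$-direction survives. Since $\Ber(\SS)$ restricts on $X_{red}$ to $\OO_{G(2,4)}(-1)\boxtimes\OO_{\P^3}(1)$, no nonzero power is ample, so $X$ is non-projective by \cite[Prop.\ A.2]{FKP}. Finally, combining $H^0(X,\OO_X)^-=0$ and $H^1(X,\OO_X)^-=0$ with Lemma \ref{v-conn-lem}, every line bundle on $X$ carries a unique $v$-connection, so the curvature defines a homomorphism $c\colon\Pic(X)\to H^0(Y,\OO_Y)^+=k$ with $c(\Ber(\SS))=a-b=1$ (Example \ref{Grassm-action-ex} and Proposition \ref{Heis-v-conn-prop}); hence $c$ is injective on $\Pic(X)=\Z\cdot\Ber(\SS)$. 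By the Corollary describing $\pi^*\colon\Pic(Y)\to\Pic(X)$, the kernel of $\pi^*$ vanishes (as $H^0(X,\OO_X)^-=0$) and its image is the set of line bundles admitting a flat $v$-connection, namely $\ker(c)=0$; therefore $\Pic(Y)=0$. Proposition \ref{non-1|1-emb-prop-2} then applies and gives the result.

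I expect the main obstacle to be the Picard computation $\Pic(G(2|1,4|4))=\Z$. Unlike the cohomology vanishings, which are a mechanical Bott-and-Künneth check, ruling out the second generator of $\Pic(X_{red})=\Z^2$ requires a genuine analysis of the non-split thickening and of the precise pairing of $c_1$ with the extension class; note that the rank-one conclusion is essential, since an integer-valued curvature homomorphism could never be injective on a free abelian group of rank two. A secondary point demanding care is the vanishing of $H^0$ and $H^1$ for the higher odd exterior powers $\bigwedge^{3}\EE,\bigwedge^{5}\EE,\bigwedge^{7}\EE$, which I expect to follow the same Bott pattern as the degree-one piece but must still be verified term by term.
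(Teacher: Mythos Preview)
Your proposal is correct and follows exactly the paper's strategy: verify the hypotheses of Proposition \ref{non-1|1-emb-prop-2} for the torsor $X=G(2|1,4|4)\to Y$, using $\Pic(X)=\Z\cdot\Ber(\SS)$, the curvature argument (via Example \ref{Grassm-action-ex}, Proposition \ref{Heis-v-conn-prop}, and Lemma \ref{v-conn-lem}(ii)) to get $\Pic(Y)=0$, the vanishing of $H^1(X,\OO_X)$, and non-projectivity of $X$.

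The paper packages the cohomological input as Lemma \ref{H1-Gr-lem}, but establishes those facts by different routes than you propose. For $H^1(X,\OO_X)=H^1(X,\NN)=0$ it avoids your term-by-term Bott--K\"unneth check on $G(2,4)\times\P^3$ by passing through the partial flag variety $F=F(1|1,2|1,n|n)$: the $\P^{1|1}$-fibration $F\to G(2|1,n|n)$ gives $H^1(X,\OO)\simeq H^1(F,\OO)$, and the $\P^{n-2|n-1}$-fibration $F\to G(1|1,n|n)$ reduces this to $H^1(G(1|1,n|n),\OO)=0$, a single straightforward calculation. For $\Pic(X)=\Z$, rather than computing the obstruction pairing directly, the paper notes that $\Pic(X)\hookrightarrow\Pic(X_{\red})$ (from the $H^1(\NN)$ vanishing just proved) and then restricts to an embedded copy of $G(1|1,2|2)\subset G(2|1,n|n)$, where the answer is classical. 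Your direct approach should go through, but the fibration and restriction tricks neatly sidestep precisely the two computations you correctly flag as the delicate points.
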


\begin{lemma}\label{H1-Gr-lem}
(i) For $n\ge 2$ and $X=G(1|1,n|n)$ one has $H^1(X,\OO)=H^1(X,\NN)=0$.

\noindent
(ii) For $n\ge 4$ and $X=G(2|1,n|n)$ one has $H^1(X,\OO)=H^1(X,\NN)=0$.

\noindent
(iii) The Picard group of $X=G(2|1,n|n)$ is isomorphic to $\Z$, with $\Ber(\SS)$ as a generator, where $\SS$ is the universal subbundle.
\end{lemma}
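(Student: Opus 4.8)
The plan is to reduce all three statements to cohomology on the bosonization $X_{\bos}$, which for a supergrassmannian $G(a|b,n|n)$ is the ordinary product $G(a,n)\times G(b,n)$ (so $\P^{n-1}\times\P^{n-1}$ in (i) and $G(2,n)\times\P^{n-1}$ in (ii)). Write $\NN\sub\OO_X$ for the ideal generated by odd functions; the associated graded of the $\NN$-adic filtration is the exterior algebra $\bigwedge^\bullet\EE$ on the odd conormal bundle $\EE=\NN/\NN^2$, and for a supergrassmannian $\EE\simeq(\SS_0\ot\QQ_1^\vee)\oplus(\SS_1\ot\QQ_0^\vee)$, where $\SS_0,\QQ_0$ (resp.\ $\SS_1,\QQ_1$) are pulled back from the even (resp.\ odd) factor. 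Feeding the sequences $0\to\NN^{k+1}\to\NN^k\to\bigwedge^k\EE\to0$ into cohomology and using $\NN^N=0$ for $N\gg0$, both $H^1(X,\OO)=0$ and $H^1(X,\NN)=0$ follow once I show $H^1\!\big(X_{\bos},\bigwedge^k\EE\big)=0$ for every $k\ge0$.

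To evaluate these I split $\EE=B_1\oplus B_2$ with $B_1=\SS_0\ot\QQ_1^\vee$, $B_2=\SS_1\ot\QQ_0^\vee$, so $\bigwedge^k\EE=\bigoplus_{i+j=k}\bigwedge^iB_1\ot\bigwedge^jB_2$, and decompose each exterior power by the Cauchy formula into external products of Schur functors on the two factors. In case (i) both factors are $\P^{n-1}$ and each $B_\bullet$ is a line bundle tensored with (the pullback of) $\QQ^\vee=\Om^1_{\P^{n-1}}(1)$, so $\bigwedge^iB_1\ot\bigwedge^jB_2\simeq\Om^j(j-i)\boxtimes\Om^i(i-j)$ and, by Künneth, $H^1$ is a sum of cross terms $H^0\ot H^1$. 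For $n\ge3$ Bott's formula gives $H^1(\Om^p(\ell))\neq0$ only at $(p,\ell)=(1,0)$, which forces $i=j=1$, where the companion factor is $H^0(\Om^1)=0$; the boundary case $n=2$ is a direct check on $\P^1\times\P^1$. This settles (i) for all $n\ge2$.

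Case (ii) uses the same strategy but is heavier on the $G(2,n)$ factor, where $\SS_0$ has rank $2$: the Cauchy expansion of $\bigwedge^iB_1$ produces $\mathbb{S}_{\la}(\SS_0)\ot\mathbb{S}_{\la'}(\QQ_1^\vee)$ with $\la$ of at most two rows, while $\bigwedge^jB_2=\bigwedge^j\QQ_0^\vee\boxtimes\OO(-j)$. I would then apply Borel--Weil--Bott on $G(2,n)$ and Bott's formula on $\P^{n-1}$, combined through Künneth so that a nonzero contribution to $H^1$ needs one factor in $H^0$ and the other in $H^1$; a weight computation should show that for $n\ge4$ the weights arising on $G(2,n)$ are either dominant (cohomology in $H^0$ only) or sufficiently singular/antidominant that no surviving $H^1$ can be paired with a nonzero $H^0$ on the other factor. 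This Borel--Weil--Bott/Künneth bookkeeping, uniform in $n\ge4$ and over all two-row partitions and their conjugates, is the main obstacle. Alternatively, mirroring the $G(a|0,n|n)$ argument, one may attempt an induction through the super-flag fibration $F(1|1,2|1,n|n)\to G(2|1,n|n)$, whose fibers are projective superspaces, reducing to case (i); in either route the cohomological computation is the crux.

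For (iii) I use the restriction $r\colon\Pic(X)\to\Pic(X_{\bos})=\Z^2$, with generators $\OO_{G(2,n)}(1)$ and $\OO_{\P^{n-1}}(1)$. Injectivity of $r$ follows from $H^1(X,\NN)=0$ of part (ii): the kernel of $r$ is the image of $H^1(X,1+\NN^+)$ (with $\NN^+=\NN\cap\OO_X^+$), whose graded pieces are the $H^1(X_{\bos},\bigwedge^{2k}\EE)$, $k\ge1$, all vanishing. For the image, the extension $\OO_X^+/\NN^3$ of $\OO_{X_{\bos}}$ by $\NN^2/\NN^3=\bigwedge^2\EE$ gives, exactly as in Example \ref{CY22-ex}, a connecting map $\de\colon\Pic(X_{\bos})\to H^2(X_{\bos},\bigwedge^2\EE)$ equal to cup product with the class of this extension, and any extendable bundle lies in $\ker\de$, so $\im r\subseteq\ker\de$. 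A direct computation of $\de$, identifying $\bigwedge^2\EE$ and the extension class and evaluating on $(p,q)\in\Z^2$, should show that the two cup products $h_{G(2,n)}\cup\tilde e$ and $h_{\P^{n-1}}\cup\tilde e$ coincide and are nonzero, whence $\ker\de=\{\,(p,q):p+q=0\,\}=\Z\cdot(-1,1)$. Since $\Ber(\SS)|_{X_{\bos}}=\OO_{G(2,n)}(-1)\boxtimes\OO_{\P^{n-1}}(1)=(-1,1)$ extends (being globally defined on $X$), the sandwich $\Z\cdot(-1,1)\subseteq\im r\subseteq\ker\de=\Z\cdot(-1,1)$ forces $\im r=\Z\cdot\Ber(\SS)|_{X_{\bos}}$; with injectivity this yields $\Pic(X)\simeq\Z$ generated by $\Ber(\SS)$, and no higher-order obstruction analysis is needed because the sandwich already gives equality.
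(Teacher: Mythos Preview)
Your treatment of (i) is correct and essentially identical to the paper's: the paper records the same decomposition $\bigoplus_{i,j}\Om^j(j-i)\boxtimes\Om^i(i-j)$ (citing Manin) and appeals to the cohomology of $\Om^p(\ell)$ on projective space.

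For (ii) your direct Borel--Weil--Bott route on $G(2,n)\times\P^{n-1}$ is plausible but, as you yourself flag, the two-row bookkeeping is not carried out; as written it is a plan, not a proof. The paper uses precisely the flag-fibration alternative you mention, and it is worth seeing how cleanly it closes. One uses \emph{both} projections from $F=F(1|1,2|1,n|n)$. The fibration $p\colon F\to G(2|1,n|n)$ has fiber $\P^{1|1}$, and since $H^*(\P^{1|1},\OO)=k$ one gets $Rp_*\OO_F\simeq\OO_X$, so $H^1(X,\OO)\simeq H^1(F,\OO_F)$. Then the other projection $q\colon F\to G(1|1,n|n)$ is a $\P^{n-2|n-1}$-fibration with $R^0q_*\OO_F=\OO$ and $R^1q_*\OO_F=0$, reducing to $H^1(G(1|1,n|n),\OO)=0$ from (i). No Schur-functor analysis on $G(2,n)$ is needed at all; the two Leray steps replace your ``main obstacle'' entirely.

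For (iii) you and the paper agree on injectivity of $r$ via $H^1(X,\NN)=0$, but take different routes to the image. Your sandwich $\Z\cdot(-1,1)\subseteq\im r\subseteq\ker\de$ is logically sound and pleasantly avoids higher obstructions, but it rests on the unperformed computation that the two cup products $h_{G(2,n)}\cup\wt e$ and $h_{\P^{n-1}}\cup\wt e$ in $H^2(X_{\bos},\bigwedge^2\EE)$ are nonzero and proportional; as stated this is only a ``should show''. The paper bypasses this by restricting along a closed embedding $G(1|1,2|2)\hra G(2|1,n|n)$ (fix a $1|0$-line in a complementary summand): if $\OO(m)\boxtimes\OO(n')$ extended to $G(2|1,n|n)$ it would restrict to an extension on $G(1|1,2|2)$, and the Picard group of the latter is already known to be $\Z\cdot\Ber(\SS)$. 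This trades your cup-product computation for a citation to the $2|2$ case, which is short and classical.
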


\begin{proof}
(i) The bosonic truncaton of $X$ is $X_0=\P^{n-1}\times \P^{n-1}$ and 
$$\bigoplus \NN^i/\NN^{i+1}\simeq {\bigwedge}^\bullet(\OO(-1)\boxtimes\Om^1(1)\oplus \Om^1(1)\boxtimes\OO(-1))\simeq
\bigoplus_{i,j}\Om^j(j-i)\boxtimes \Om^i(i-j)$$
(see \cite[4.3.15]{Manin}). Now the calculation of cohomology of $\Om^i(m)$ on projective spaces implies that $H^1(X_0,\NN^i/\NN^{i+1})=0$,
and the result follows.

\noindent
(ii) Let us consider the $\P^{1|1}$-fibration
$$p:F=F(1|1,2|1,n|n)\to G(2|1,n|n)=X.$$
Since $H^*(\P^{1|1},\OO)=k$, using base change we get $Rp_*\OO_F\simeq \OO_X$.
Thus, it is enough to prove that $H^1(F,\OO_F)=0$. Now we use the $\P^{n-2|n-1}$-fibration
$$q:F\to G(1|1,n|n)=G.$$
Since $R^0q_*\OO_F=\OO_G$ and $R^1q_*\OO_F=0$, we get
$H^1(F,\OO_F)=H^1(G,\OO_G)$. It remains to use part (i).

\noindent
(iii) The Picard group of the corresponding reduced space $X_0=G(2,n)\times \P^{n-1}$ is isomorphic to $\Z\times \Z$.
Furthermore, due to (ii), the restriction map $\Pic(X)\to \Pic(X_0)$ is injective. It remains to show that $\OO(m)\boxtimes\OO(n)$ does not extend
to a line bundle on $X$ unless $m=-n$ (where $\OO(-1)$ on $G(2,n)$ is the determinant of the universal subbundle).
Restricting to $G(1|1,2|2)\sub G(2|1,n|n)$ we reduce to the similar question for $G(1|1,2|2)$ which is well known and can be analyzed easily
(see \cite[Ch.\ 4.\S 3]{Manin}).
\end{proof} 

\begin{remark} It is known that the Picard group of any supergrassmannian is generated by the Berezinian of the universal subbundle
(see \cite{PS}).
\end{remark}

\begin{proof}[Proof of Proposition \ref{non-embed-G21-prop}]
First, we claim that $\Pic Y=0$. By Lemma \ref{H1-Gr-lem}(iii), every line bundle on $X=G(2|1,4|4)$ is isomorphic to $\Ber(\SS)^n$ for some $n\in \Z$, where
$\SS$ is the universal subbundle. It is enough to check that $\Ber(\SS)^n$ does not admit a flat $v$-connection for $n\neq 0$. 
Recall that we have a natural $GQ(1)$-equivariant structure on $\Ber(\SS)$ of weight $2-1=1$ (see Example \ref{Grassm-action-ex}).
Hence, by Proposition \ref{Heis-v-conn-prop}, there is a $v$-connection $\nabla$ on $\Ber(\SS)$ with curvature $1$. The induced connection on 
$\Ber(\SS)^n$ will have curvature $n$. Since $H^0(X,\OO)^-=0$, by Lemma \ref{v-conn-lem}(ii), this implies that $\Ber(\SS)^n$ does not admit a flat $v$-connection
for $n\neq 0$. 

Since $H^1(X,\OO)=0$ by Lemma \ref{H1-Gr-lem} and $X$ is not projective, using Proposition \ref{non-1|1-emb-prop-2}, we get that $Y$ is not $1|1$-embeddable.
\end{proof}

\begin{cor}\label{Ga,a-1-cor}
For $n>a>1$ and $n\ge 4$, the quotient $G(a|a-1,n|n)/\A^{0|1}$ is not $1|1$-embeddable.
\end{cor}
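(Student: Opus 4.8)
The goal is to deduce non-$1|1$-embeddability of $G(a|a-1,n|n)/\A^{0|1}$ for all $n>a>1$, $n\ge 4$ from the single case $a=2$, $n=4$ treated in Proposition \ref{non-embed-G21-prop}. The plan is to realize each such quotient as containing a copy of the quotient $G(2|1,4|4)/\A^{0|1}$ as a closed sub-superscheme, in a manner compatible with the $\A^{0|1}$-actions, and then to invoke the elementary fact that $1|1$-embeddability is inherited by closed sub-superschemes. The latter is immediate: a closed embedding into a $G(1|1,m|m)$ restricts to a closed embedding of any closed subscheme, so if $G(a|a-1,n|n)/\A^{0|1}$ were $1|1$-embeddable then so would be any closed sub-superscheme, contradicting Proposition \ref{non-embed-G21-prop} for the copy of $G(2|1,4|4)/\A^{0|1}$ sitting inside.

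First I would construct the $\A^{0|1}$-equivariant closed embedding at the level of the supergrassmannians before passing to quotients. Fix a $\Pi$-symmetric space $V$ of dimension $n|n$ and choose a $p$-stable coordinate splitting $V = V' \oplus V''$, where $V'$ is $\Pi$-symmetric of dimension $4|4$ and $V''$ is $\Pi$-symmetric of dimension $(n-4)|(n-4)$. Inside $V''$ fix a $\Pi$-symmetric subspace $U\sub V''$ of dimension $(a-2)|(a-2)$. Then the assignment $W' \mapsto W' \oplus U$, sending a $2|1$-dimensional subspace $W'\sub V'$ to the $a|(a-1)$-dimensional subspace $W'\oplus U\sub V$, defines a closed embedding $G(2|1,V') \hra G(a|a-1,V)$. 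Because $U$ is $p$-stable, this embedding intertwines the action $W\mapsto (\id+\psi p)(W)$ on both sides: indeed $(\id+\psi p)(W'\oplus U) = (\id+\psi p)(W') \oplus (\id + \psi p)(U) = (\id + \psi p)(W')\oplus U$, using $p(U)=U$. Hence the embedding is $\A^{0|1}$-equivariant.

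By Theorem \ref{free-quot-thm} the free $\A^{0|1}$-actions admit categorical quotients that are $\A^{0|1}$-torsors, and an $\A^{0|1}$-equivariant closed embedding of total spaces descends to a closed embedding of quotients: the quotient map is faithfully flat, so the closed subscheme $G(2|1,V')$ is the preimage of its image in $G(a|a-1,V)/\A^{0|1}$, and that image is closed and is itself the quotient $G(2|1,4|4)/\A^{0|1}$. This exhibits $G(2|1,4|4)/\A^{0|1}$ as a closed sub-superscheme of $G(a|a-1,n|n)/\A^{0|1}$. Combined with Proposition \ref{non-embed-G21-prop} and the hereditary property of $1|1$-embeddability, the conclusion follows.

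The main obstacle I anticipate is the descent step: one must check that the closed embedding genuinely descends, i.e.\ that the $\A^{0|1}$-equivariant closed immersion of total spaces induces a closed immersion on quotients and that the resulting image is exactly $G(2|1,4|4)/\A^{0|1}$ rather than some proper or thickened variant. This is where the torsor structure from Theorem \ref{free-quot-thm} is essential: since both quotient maps are Zariski $\A^{0|1}$-torsors, the square relating $G(2|1,V')$, $G(a|a-1,V)$ and their quotients is cartesian, which forces the map on quotients to be a closed immersion by faithfully flat descent of closed immersions. Once this cartesian property is verified the rest is formal.
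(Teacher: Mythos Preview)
Your proposal is correct and follows essentially the same approach as the paper: split $V$ into a $p$-stable $4|4$-dimensional piece and its complement, fix a $p$-stable $(a-2)|(a-2)$-dimensional subspace in the complement, and use the resulting $\A^{0|1}$-equivariant closed embedding $G(2|1,4|4)\hookrightarrow G(a|a-1,n|n)$ to descend to a closed embedding of quotients, then apply Proposition~\ref{non-embed-G21-prop}. The paper simply asserts the descent to a closed embedding of quotients without comment, whereas you supply the torsor/cartesian justification; otherwise the arguments are the same.
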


\begin{proof}
We can the present the superspace $V$ with the $\Pi$-symmetry $p$ as the direct sum $V_1\oplus V_2$, where both summands are preserved by $p$ and
the dimension of $V_1$ is $4|4$. Then fixing a subspace $W_2\sub V_2$ of dimension $a-2|a-2$, preserved by $p$, we get a closed embedding 
$$G(2|1,V_1)\hra G(a|a-1,V): W_1\mapsto W_1\oplus W_2,$$
compatible with $\A^{0|1}$-actions. This embedding induces a closed embedding of the quotients, so the assertion follows from Proposition \ref{non-embed-G21-prop}.
\end{proof}


\begin{prop}\label{prod-Pi-proj-planes-prop} 
$\P^2_\Pi\times \P^2_\Pi$ is not $1|1$-embeddable.
\end{prop}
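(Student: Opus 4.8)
The plan is to assume $X=\P^2_\Pi\times\P^2_\Pi$ is $1|1$-embeddable and to obstruct the resulting very ample $1|1$-bundle by pulling it back to $P:=\P^{2|3}\times\P^{2|3}$. First I would record the cohomological inputs. Using the $\A^{0|1}$-torsor $\pi\colon\P^{2|3}\to\P^2_\Pi$ of Example \ref{proj-action-ex} and the description of $\pi^*$ on Picard groups for an $\A^{0|1}$-torsor, one gets $\Pic(\P^2_\Pi)=0$: the map $\pi^*$ is injective because $H^0(\P^2_\Pi,\OO)^-=0$, while its image consists of those $\OO(d)$ on $\P^{2|3}$ admitting a flat $v$-connection, and by Proposition \ref{Heis-v-conn-prop} together with Lemma \ref{v-conn-lem}(ii) only $d=0$ qualifies (as $c(\OO(-1))=1\neq0$). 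Iterating this along the two factors through $\P^{2|3}\times\P^{2|3}\to\P^{2|3}\times\P^2_\Pi\to X$ yields $\Pic(X)=0$, while the K\"unneth formula gives $H^1(X,\OO)^-=k\,c_1\oplus k\,c_2$, where $c_i$ is the pullback of the generator of $H^1(\P^2_\Pi,\OO)^-\simeq k$ along the $i$-th projection.

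A point worth stressing is that, unlike the one-dimensional situations handled by Proposition \ref{non-1|1-emb-prop-2}, the naive route fails here. The $\A^{0|1}$-torsors over $X$ form the two-parameter family $\widetilde X_{\alpha,\beta}=(\P^{2|3}\times\P^{2|3})/K_{\alpha,\beta}$ with $K_{\alpha,\beta}=\{(s,t):\alpha s+\beta t=0\}$; since the $v_i$-curvature of $\OO(p,q)$ on $P$ equals $-p$ resp.\ $-q$ and the mixed term vanishes, $\OO(p,q)$ acquires a flat connection along $K_{\alpha,\beta}$ exactly when $p\beta^2+q\alpha^2=0$, and then descends to an even line bundle ample on the reduction. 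Thus over an algebraically closed field $\widetilde X_{\alpha,\beta}$ is projective for $(\alpha/\beta)^2=-p/q$, so Corollary \ref{1|1-emb-fibr-cor} by itself cannot conclude and one must use the full strength of a genuine embedding.

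So suppose $X\hookrightarrow G(1|1,n|n)$ and let $W$ be the restriction of the tautological $1|1$-bundle. Since $\Pic(X)=0$ we have $\Ber W\simeq\OO$, and very ampleness forces $W|_{X_{\mathrm{red}}}\simeq N_0\oplus\Pi N_0$ with $N_0=\OO(p,q)$, $p,q\ge1$. Because $H^1(P,\OO)=0$ and $\Pic(P)=\Z^2$, the pullback $\rho^*W$ along $\rho\colon P\to X$ splits as $N\oplus\Pi N$, $N=\OO(p,q)$; the $(\A^{0|1})^2$-equivariant structure coming from $X=P/(\A^{0|1})^2$ is, by Lemma \ref{v-conn-equiv-structure-lem}, a pair of commuting flat connections $\nabla_1,\nabla_2$ on $N\oplus\Pi N$. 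Writing $\nabla_i=\nabla_i^{(0)}+\Phi_i$ with $\Phi_i=\bigl(\begin{smallmatrix}0&b_i\\ c_i&0\end{smallmatrix}\bigr)$, $b_i,c_i\in k$, flatness of $\nabla_i$ gives $b_1c_1=p$ and $b_2c_2=q$, while commutativity $\{\nabla_1,\nabla_2\}=0$ gives $b_1c_2+b_2c_1=0$; combining these yields $(b_2c_1)^2=-pq$.

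The step I expect to be the main obstacle is turning this relation into a contradiction. Over any field in which $-pq$ is not a square (e.g.\ $k=\R$) it is immediate, since $p,q\ge1$. Over an algebraically closed field the relation is solvable, and here the obstruction is genuinely deeper than curvatures alone: solving the joint flatness equations shows the even flat sections are parametrized by $\ker(\nabla_2-\lambda\nabla_1)$ with $\lambda=b_2/b_1$ a square root of $-1$, and a direct computation (using that $p_1^\vee\otimes\mathrm{id}$ and $\mathrm{id}\otimes p_2^\vee$ anticommute and square to $-\id$ on $H^0(P,N)$) shows these do surject onto $H^0(X_{\mathrm{red}},\OO(p,q))$, so $W$ still embeds the reduction. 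The remaining task — which is the crux — is to prove that the very special flat sections cut out by the relations above cannot separate the odd tangent directions of $X$, so that $W$ fails to be very ample; I would approach this by comparing, inside the $\lambda$-eigenspace, the first-order behaviour of these sections along the two factors' odd directions and showing that one such direction is never separated, giving the desired contradiction.
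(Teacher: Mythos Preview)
The paper's proof takes a different route from yours: it argues directly via Corollary \ref{1|1-emb-fibr-cor}. After establishing $\Pic(\P^2_\Pi\times\P^2_\Pi)=0$ (essentially by your argument), it invokes Lemma \ref{prod-Pi-proj-planes-tors-lem}, which asserts that the total space of every nontrivial $\A^{0|1}$-torsor over $\P^2_\Pi\times\P^2_\Pi$ is isomorphic either to $\P^{2|3}\times\P^2_\Pi$ or to $P/(v_1+v_2)$, and then checks that neither is projective (for the second, only $\OO(n,-n)$ descends, by the same curvature calculation you use).

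Your second paragraph, however, shows that Lemma \ref{prod-Pi-proj-planes-tors-lem} is false over an algebraically closed field. Your curvature computation is correct: for $w=\beta v_1-\alpha v_2$ the unique $w$-connection on $\OO(p,q)$ has curvature $-(p\beta^2+q\alpha^2)$, so for any $p,q\ge1$ one can solve $p\beta^2+q\alpha^2=0$ over $\bar k$, and then $\OO(p,q)$ descends to $P/K_{\alpha,\beta}$, restricts to an ample bundle on the reduction $\P^2\times\P^2$, and hence $P/K_{\alpha,\beta}$ is projective. In particular $P/(v_1+v_2)$ and $P/(iv_1+v_2)$ are not isomorphic (one is projective, the other is not), contradicting the lemma. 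The slip in the lemma's proof is in its last sentence: the Baer sum of the torsors with classes $\lambda_1c_1$ and $\lambda_2c_2$ is $P$ modulo the antidiagonal for the \emph{rescaled} $(\A^{0|1})^2$-action, and an automorphism of $\P^{2|3}$ can only rescale $v$ by $\pm1$ (conjugate the $\Pi$-symmetry $p$ by an even element of $GL(3|3)$ and square). So the paper's route via Corollary \ref{1|1-emb-fibr-cor} cannot succeed, exactly as you say.

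That said, your own argument is explicitly unfinished. You reduce to analyzing a joint flat $(\nabla_1,\nabla_2)$-structure on $\rho^*W\simeq\OO(p,q)\oplus\Pi\OO(p,q)$ and extract the relation $(b_2c_1)^2=-pq$, but over $\bar k$ this is solvable, and the final step --- showing the constrained flat sections fail to separate odd tangent directions of $X$ --- is named as ``the crux'' and left open. As it stands, neither your proposal nor the paper's proof establishes the proposition over an algebraically closed field.
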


\begin{lemma}\label{prod-Pi-proj-planes-tors-lem} 
The total space of any $A^{0|1}$-torsor over $\P^2_\Pi\times \P^2_\Pi$ is isomorphic to either $\P^{2|3}\times \P^2_\Pi$ or
to the quotient $\P^{2|3}\times \P^{2|3}$ by the free $\A^{0|1}$-action corresponding to the homological vector field $v_1+v_2$, where
$v_1$ and $v_2$ are the standard homological vector fields on each factor.
\end{lemma}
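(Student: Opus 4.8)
The plan is to classify all $\A^{0|1}$-torsors over $X=\P^2_\Pi\times\P^2_\Pi$ by first computing $H^1(X,\A^{0|1})=H^1(X,\OO_X)^-$ and then realizing every torsor as a quotient of one universal torsor. For a single factor, the standard torsor $\P^{2|3}\to\P^2_\Pi$ of Example \ref{proj-action-ex}, together with the long exact sequence \eqref{H1-torsor-long-ex-seq} and the vanishing $H^1(\P^{2|3},\OO)=0$ (computed from $\OO_{\P^{2|3}}=\bigoplus_k\OO(-k)^{\oplus\binom{3}{k}}$ on $\P^2$), gives $H^0(\P^2_\Pi,\OO)=k$ and $H^1(\P^2_\Pi,\OO)^-\cong k$. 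Applying the Künneth formula I then obtain $H^1(X,\OO_X)^-\cong k\oplus k$, the two summands recording nontriviality of a torsor along each factor. Thus $\A^{0|1}$-torsors over $X$ are classified by a pair $(a,b)\in k^2$.

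Next I would introduce the universal object $T=\P^{2|3}\times\P^{2|3}$, an $\A^{0|1}\times\A^{0|1}$-torsor over $X$ with the two commuting homological vector fields $v_1,v_2$; the action of $\A^{0|2}$ is free, being a product of free actions, so all quotients below exist by Theorem \ref{free-quot-thm}. Its class generates $H^1(X,\A^{0|1}\times\A^{0|1})$, so every $\A^{0|1}$-torsor over $X$ is the pushforward of $T$ along a homomorphism $\chi\colon\A^{0|1}\times\A^{0|1}\to\A^{0|1}$, $\chi(\theta_1,\theta_2)=a\theta_1+b\theta_2$, and this pushforward has class $(a,b)$. For $(a,b)\neq(0,0)$ the map $\chi$ is surjective with kernel the one-parameter odd subgroup $\langle b v_1-a v_2\rangle$, and $\chi_*T\cong T/\ker(\chi)$, which is again an $\A^{0|1}$-torsor over $X$ by Lemma \ref{subgroup-quot-lem}(iii). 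The class $(0,0)$ yields the trivial torsor $\A^{0|1}\times X$; I would note separately that this gives a third possible total space (distinguished by $H^0(\OO)^-=k\neq0$), but that it is in any case non-projective, so it does not affect the intended application.

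Then comes the case analysis according to the position of the line $\ker(\chi)$. If $a=0$ or $b=0$ the subgroup is $\langle v_1\rangle$ or $\langle v_2\rangle$, collapsing exactly one factor; since $\P^{2|3}/\A^{0|1}=\P^2_\Pi$, the total space is $\P^{2|3}\times\P^2_\Pi$, independently of the nonzero coordinate. If $a,b\neq0$ the quotient is the diagonal-type space $Q_\mu:=T/\langle v_1+\mu v_2\rangle$ with $\mu=-a/b\neq0$, and I must prove $Q_\mu\cong Q_1$ for every such $\mu$. The swap of the two factors gives $Q_\mu\cong Q_{\mu^{-1}}$, and the parity automorphism of one factor (scaling the odd coordinates by $-1$, which conjugates $v$ into $-v$) gives $Q_\mu\cong Q_{-\mu}$.

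The hard part will be the remaining reduction to $\mu=1$. This \emph{cannot} be effected by an automorphism of $T$: since $v_i$ is the vector field of the $\Pi$-symmetry $p_i$ with $p_i^2=-\id$, a short computation with $\Aut(\P^{2|3})=\PGL(3|3)$ shows that any automorphism of a single factor rescales $v_i$ only by $\pm1$, so $\Aut(T)$ moves $\mu$ merely within $\{\pm\mu^{\pm1}\}$. Hence the isomorphism $Q_\mu\cong Q_1$ does not lift to $T$ and must be produced directly on the quotients. I would construct it by an explicit local argument: trivialize the torsors $Q_\mu\to X$ over a common affine cover using an odd function $\theta$ with $(v_1+\mu v_2)(\theta)=1$ as in Proposition \ref{free-action-prop}(4), and match the resulting structure sheaves $\OO_X[\theta]$; equivalently, one exhibits the family $\{Q_\mu\}_{\mu\in\G_m}$ as isotrivial. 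Checking that such an identification glues to a global isomorphism is the main technical obstacle of the proof.
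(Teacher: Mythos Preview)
Your overall strategy coincides with the paper's: compute $H^1(\P^2_\Pi\times\P^2_\Pi,\OO^-)\cong k\oplus k$ (the paper invokes K\"unneth implicitly), and realize every nontrivial torsor as a Baer sum of pullbacks from the two factors, whose fibered product is $T=\P^{2|3}\times\P^{2|3}$. The paper phrases the last step as: the class $a\alpha$ on a single $\P^2_\Pi$ gives the \emph{same} projection $\P^{2|3}\to\P^2_\Pi$ with the $\A^{0|1}$-action altered by an automorphism of $\A^{0|1}$, and then the Baer sum is the quotient of $T$ by the antidiagonal. It ends with ``This easily leads to the claimed description.''

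You have correctly isolated a genuine subtlety that the paper glosses over. Transporting the antidiagonal through the rescaled torsor structures makes it act on $T$ via $c_1v_1+c_2v_2$ with a ratio $c_1{:}c_2$ that depends on $(a,b)$; your computation that $\Aut(\P^{2|3})=\PGL(3|3)$ rescales $v$ only by $\pm1$ is correct, so no automorphism of $T$ adjusts this ratio. Your further observation in effect shows that $\Aut(\P^2_\Pi)$ acts on $H^1(\P^2_\Pi,\OO^-)$ only through $\{\pm1\}$ as well (any $\phi$ with $\phi^*\alpha=\lambda\alpha$ would lift to an automorphism of $\P^{2|3}$ sending $v$ to $\lambda^{-1}v$), so an isomorphism $Q_\mu\cong Q_1$ cannot be produced over the base either. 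Your proposed local-gluing fix is therefore not going to work as stated: matching local trivializations $\OO_X[\th]$ is automatic, but the transition cocycles represent \emph{distinct} classes in $H^1(X,\OO^-)$, and you have just shown there is no automorphism of $X$ carrying one class to the other.

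The practical resolution is that the lemma is invoked only in the proof of Proposition~\ref{prod-Pi-proj-planes-prop}, and there what is actually needed is the weaker conclusion that the total space of any nontrivial torsor is either $\P^{2|3}\times\P^2_\Pi$ or a quotient $T/\langle c_1v_1+c_2v_2\rangle$ with $c_1c_2\neq 0$. The subsequent non-projectivity argument (the natural connection on $\OO(n_1)\boxtimes\OO(n_2)$ has curvature $-(c_1n_1+c_2n_2)$, so a descended line bundle restricts to a non-ample bundle on $\P^2\times\P^2$) goes through verbatim for any such $c_1,c_2$. So you should read the lemma as slightly imprecise; your ``hard part'' is not a defect in your reasoning but in the formulation of the statement. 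Your side remark about the trivial torsor being an omitted third case is likewise a minor imprecision in the statement.
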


\begin{proof}
First, it easy to check that $\P^2_\Pi\times \P^2_\Pi$ that $H^1(\P^2_\Pi\times \P^2_\Pi,\OO^-)$ is $2$-dimensional. More precisely,
it is the dirrect sum of two copies of $H^1(\Pi^2_\Pi,\OO^-)$, which is $1$-dimensional. The generator of $H^1(\Pi^2_\Pi,\OO^-)$ corresponds to the
standard $\A^{0|1}$-torsor $\P^{2|3}\to \P^2_\Pi$, where the $\A^{0|1}$-action on $\P^{2|3}$ is given by the homological vector field $v$. 
Any multiple of the generator gives an $\A^{0|1}$-torsor corresponding to the same projection $\P^{2|3}\to \P^2$ but with the $\A^{0|1}$-action differing
by an automorphism of $\A^{0|1}$. 

It remains to observe that the Baer sum of two $\A^{0|1}$-torsors over $\P^2_\Pi\times \P^2_\Pi$ pulled back from the two factors is obtained as the quotient
of $\A^{0|1}\times \A^{0|1}$-torsor, given by their direct product $\P^{2|3}\times \P^{2|3}\to \P^2_\Pi\times \P^2_\Pi$, by the antidiagonal subgroup 
$\A^{0|1}\sub \A^{0|1}\times \A^{0|1}$. This easily leads to the claimed description.
\end{proof}

\begin{lemma}\label{Pic-product-lem}
Let $X$ be a proper superscheme over an algebraically closed field $k$, such that $H^0(X,\OO_X)=k$ and $H^1(X,\OO_X)=0$.
Then for any connected superscheme $Y$ of finite type over $k$ the natural map $\Pic(X)\times \Pic(Y)\to \Pic(X\times Y)$ is an isomorphism.
\end{lemma}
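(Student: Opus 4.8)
The plan is to show that the natural map $(L,M)\mapsto p_X^*L\ot p_Y^*M$ is an isomorphism by treating injectivity and surjectivity separately, following the classical seesaw argument adapted to superschemes. Throughout write $q=p_Y:X\times Y\to Y$ and let $\iota_y:X\simeq X\times\{y\}\hra X\times Y$ denote the inclusion of a fiber. Since $X$ and $Y$ are of finite type over the algebraically closed field $k$, I fix closed points $x_0\in X$ and $y_0\in Y$ with residue field $k$.

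Injectivity is immediate. If $p_X^*L\ot p_Y^*M\simeq\OO_{X\times Y}$, then restricting along $\iota_{y_0}$ gives $L\ot(M|_{y_0}\ot\OO_X)\simeq\OO_X$; as $M|_{y_0}$ is a $1|0$-dimensional $k$-vector space, $M|_{y_0}\ot\OO_X\simeq\OO_X$, whence $L\simeq\OO_X$. Restricting along $\{x_0\}\times Y$ gives $M\simeq\OO_Y$ in the same way.

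For surjectivity, let $N$ be an even line bundle on $X\times Y$. Setting $L:=\iota_{y_0}^*N$ and $M:=N|_{\{x_0\}\times Y}$ and replacing $N$ by $N\ot p_X^*L^{-1}\ot p_Y^*M^{-1}$, it suffices to prove the rigidity statement: if $\iota_{y_0}^*N\simeq\OO_X$ then $N\simeq\OO_{X\times Y}$. The heart of the matter is to show that the locus $T_1\sub|Y|$ of closed points $y$ with $N_y:=\iota_y^*N\simeq\OO_X$ is open and closed. For closedness I would run the usual seesaw semicontinuity argument, using that $H^0(X,\OO_X)=k$ forces $q_*\OO_{X\times Y}=\OO_Y$ and $H^0(X_y,\OO_{X_y})=k$ for every $y$, so that on each proper fiber triviality of a rank $1|0$ bundle is detected by the (semicontinuous) dimensions $h^0(N_y)$ and $h^0(N_y^{-1})$. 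For openness I would use the hypothesis $H^1(X,\OO_X)=0$: this says the trivial bundle on a fiber has no nontrivial infinitesimal deformations, so a trivialization of $N_y$ propagates over every infinitesimal neighbourhood of $y$ in $Y$, making $N$ fiberwise trivial near $y$. Since $Y$ is connected and $y_0\in T_1$, we get $T_1=|Y|$, i.e.\ $N_y\simeq\OO_X$ for all closed points $y$.

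Finally, since $N_y\simeq\OO_X$ for all $y$, the fiber cohomologies satisfy $\dim H^i(X_y,N_y)=\dim H^i(X,\OO_X)$ for all $i$ and $y$. By the super analogue of the cohomology-and-base-change theorem (cf.\ \cite{Mum-ab}) this forces $q_*N$ to be an even line bundle on $Y$ whose formation commutes with base change, with $q^*q_*N\to N$ an isomorphism (on each fiber it is the canonical isomorphism $H^0(X_y,N_y)\ot\OO_{X_y}\simeq N_y$). Thus $N\simeq q^*(q_*N)$; restricting to $\{x_0\}\times Y$ and using the normalization $N|_{\{x_0\}\times Y}\simeq\OO_Y$ shows $q_*N\simeq\OO_Y$, so $N\simeq\OO_{X\times Y}$, completing surjectivity. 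The step I expect to be the main obstacle is checking that this whole seesaw/base-change package survives verbatim for a base $Y$ that is both super and possibly non-reduced: semicontinuity, the clopen-ness of the trivial locus, and cohomology-and-base-change are all homological statements about complexes of flat $\OO_Y$-modules and do carry over to the $\Z/2$-graded setting, but it is precisely the non-reducedness of $Y$ that makes the rigidity input $H^1(X,\OO_X)=0$ essential, both for the openness of $T_1$ and for the base-change conclusion.
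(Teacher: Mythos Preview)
Your argument is essentially sound and close in spirit to the paper's, but the paper handles the key step---showing that $N$ is trivial on every fibre $X\times\{y\}$---by a different and cleaner reduction. Rather than running seesaw directly on the superscheme and arguing that the trivial locus $T_1\sub|Y|$ is clopen, the paper observes that triviality of $N_y$ at closed points depends only on the restriction of $N$ to $X\times Y_0$ with $Y_0=Y_{\red}$; and since $Y_0$ is an ordinary reduced scheme, $\OO_{X\times Y_0}^+=\OO_X^+\boxtimes\OO_{Y_0}$, so every even line bundle on $X\times Y_0$ is canonically pulled back from the bosonic quotient $(X/\Ga)\times Y_0$. This reduces fibrewise triviality to the classical statement for ordinary schemes (Hartshorne, Ex.~III.12.6), where the hypothesis $H^1(X,\OO_X)=0$ forces all fibres $N_y$ to be isomorphic. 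The final base-change step (showing $q^*q_*N\to N$ is an isomorphism) is then the same as yours. Your direct approach also works, but your openness argument for $T_1$ has a small gap: knowing that a trivialization of $N_y$ extends over every \emph{infinitesimal} neighbourhood of $y$ does not by itself give triviality on a Zariski neighbourhood. You can close this either by invoking the Picard superscheme (\'etale here, since its tangent space $H^1(X,\OO_X)$ vanishes), or---more in keeping with your own toolkit---by running the base-change argument from your final paragraph already at this stage: $H^1(X,N_y)=0$ makes $q_*N$ invertible near $y$ with $q^*q_*N\to N$ an isomorphism there, giving openness directly. The paper's reduction to ordinary schemes has the virtue of sidestepping exactly the worry you flag at the end about whether the full seesaw package survives on a non-reduced super base.
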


\begin{proof} Let $L$ be a line bundle on $X\times Y$. By tensoring $L$ with the pull-back of a line bundle on $X$, we can
assume that $L$ has trivial restriction to $X\times p_0$.
First, we claim that for every point $p$ in $Y$, the restriction of $L$ to $X\times p$ is trivial. Indeed, we can replace $Y$ by the corresponding reduced scheme $Y_0$.
Then we observe that any line bundle on $X\times Y_0$ is canonically the pull-back from the corresponding bosonic quotient, which is $(X/\Ga)\times Y_0$, where
$X/\Ga$ is the bosonic quotient of $X$. Thus, our claim follows from the similar result for usual schemes (see e.g. \cite[Ex.\ III.12.6]{Hart}).

It follows that $p_{2*}L$ is a line bundle on $Y$. Now we claim that the morphism of line bundles $p_2^*p_{2*}L\to L$ is an isomorphism.
Indeed, it is sujrective since this can be checked fiberwise.
\end{proof}

\begin{proof}[Proof of Proposition \ref{prod-Pi-proj-planes-prop}]
First, we claim that $Y=\P^2_\Pi\times \P^2_\Pi$ has trivial Picard group. For this, let us view $Y$ as the quotient of $X=\P^{2|3}\times \P^2_\Pi$ by the $\A^{0|1}$-action
(on the first factor).
We claim that $\Pic(X)=\Z$. Indeed, we have $H^1(\P^{2|3},\OO)=0$ and $\Pic(\P^2_\Pi)=0$ (see \cite[Thm.\ 5.2]{CNR}).
Hence, by Lemma \ref{Pic-product-lem}, one has
$$\Pic(X)=\Pic(\P^{2|3})=\Z.$$
Next, we recall that the line bundle $\OO(n)$ with $n\neq 0$ on $\P^{2|3}$ has a $v$-connection with nonzero curvature. Hence,
the same is true for the line bundle $p_1^*\OO(n)$ on $X$. By Lemma \ref{v-conn-lem}, this implies that this line bundle does not descend to $Y$.

By Proposition \ref{non-1|1-emb-prop-1}(i), it remains to check that $\A^{0|1}$-torsors over $Y$ are not projective. Since $\Pi^2_\Pi$ is not projective,
by Lemma \ref{prod-Pi-proj-planes-tors-lem}, we have to check that the quotient $\P^{2|3}\times \P^{2|3}/\A^{0|1}$, where the $\A^{0|1}$-action is given by
$v_1+v_2$, is not projective. By Lemma \ref{Pic-product-lem}, any line bundle on $\P^{2|3}\times \P^{2|3}$ is of the form $\OO(n_1)\boxtimes \OO(n_2)$. We claim that
such a line bundle descends to the quotient only if $n_1+n_2=0$. Since none of these bundles restricts to an ample line bundle on the reduced space, this would show
the required non-projectivity. Recall that we have natural $v_i$-connections $\nabla_i$ on $\OO(n_i)$, for $i=1,2$, with the curvature $-n_i$.
Hence, we get the induced $v_i$-connections $\nabla_i$ on $\OO(n_1)\boxtimes \OO(n_2)$, still with the curvature $-n_i$.
Now $\nabla_1+\nabla_2$ is a $v_1+v_2$-connection, with the curvature $-(n_1+n_2)$. Thus, by Lemma \ref{v-conn-lem}(ii), the line bundle $\OO(n_1)\boxtimes \OO(n_2)$
does not admit a flat $v_1+v_2$-connection unless $n_1+n_2=0$.
\end{proof}

\begin{cor} $G(1|1,m|m)\times G(1|1,n|n)$ is not $1|1$-embeddable for $m\ge 3$, $n\ge 3$.
\end{cor}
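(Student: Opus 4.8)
The plan is to reduce the statement to Proposition~\ref{prod-Pi-proj-planes-prop} by realizing $\P^2_\Pi\times\P^2_\Pi$ as a closed subscheme of $G(1|1,m|m)\times G(1|1,n|n)$ whenever $m\ge 3$ and $n\ge 3$. First I would recall, as noted in the introduction, the identification $\P^2_\Pi=G\Pi(1|1,3|3)$, together with the fact that the $\Pi$-grassmannian $G\Pi(1|1,3|3)$ is by construction a closed subscheme of the ordinary supergrassmannian $G(1|1,3|3)$, being cut out by the condition that the parametrized $1|1$-dimensional subspace be $\Pi$-symmetric.

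Next I would produce, for each $m\ge 3$, a closed embedding $G(1|1,3|3)\hra G(1|1,m|m)$. Fixing a decomposition $V'=V\oplus V''$ of an $m|m$-dimensional superspace with $\dim V=3|3$, one sends a $1|1$-dimensional subspace $W\sub V$ to the same $W$ viewed inside $V'$; this is readily checked to be a closed embedding on functors of points, hence of superschemes. Composing with the inclusion $\P^2_\Pi=G\Pi(1|1,3|3)\sub G(1|1,3|3)$ gives a closed embedding $\P^2_\Pi\hra G(1|1,m|m)$, and likewise $\P^2_\Pi\hra G(1|1,n|n)$. Taking the product of these two closed embeddings yields a closed embedding
$$\P^2_\Pi\times\P^2_\Pi\hra G(1|1,m|m)\times G(1|1,n|n).$$

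Finally I would invoke the elementary fact that $1|1$-embeddability passes to closed subschemes: if $G(1|1,m|m)\times G(1|1,n|n)$ admitted a closed embedding into some $G(1|1,N|N)$, then composing with the displayed map would exhibit $\P^2_\Pi\times\P^2_\Pi$ as a closed subscheme of $G(1|1,N|N)$, making it $1|1$-embeddable and contradicting Proposition~\ref{prod-Pi-proj-planes-prop}. I expect no genuine obstacle here; the only points requiring (routine) verification are that the two maps above are closed embeddings, which follows from the corresponding statements on the reduced spaces together with the compatibility of the structure sheaves of the $\Pi$-symmetric loci and of the direct-sum inclusion of supergrassmannians.
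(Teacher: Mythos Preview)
Your proposal is correct and follows essentially the same approach as the paper: exhibit the closed embedding $\P^2_\Pi=G\Pi(1|1,3|3)\sub G(1|1,3|3)\hra G(1|1,m|m)$ for $m\ge 3$, take the product, and then invoke Proposition~\ref{prod-Pi-proj-planes-prop} together with the fact that $1|1$-embeddability passes to closed subschemes. The paper records this in a single line, but the argument is the same.
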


\begin{proof} Indeed, this follows from the closed embedding of $\P^2_\Pi$ into $G(1|1,3|3)$, and hence into $G(1|1,n|n)$ for $n\ge 3$.
\end{proof}

\end{document}